\documentclass[pdflatex,sn-mathphys-ay]{sn-jnl} 

\usepackage{graphicx}%
\usepackage{multirow}%
\usepackage{amsmath,amssymb,amsfonts}%
\usepackage{amsthm}%
\usepackage{mathrsfs}%
\usepackage[title]{appendix}%
\usepackage{xcolor}%
\usepackage{textcomp}%
\usepackage{manyfoot}%
\usepackage{booktabs}%
\usepackage{algorithm}%
\usepackage{algorithmicx}%
\usepackage{algpseudocode}%
\usepackage{listings}%

\theoremstyle{thmstyleone}%
\newtheorem{theorem}{Theorem}%
\newtheorem{corollary}{Corollary}
\newtheorem{proposition}[theorem]{Proposition}%

\theoremstyle{thmstyletwo}%
\newtheorem{remark}{Remark}%

\theoremstyle{thmstylethree}%
\newtheorem{definition}{Definition}%
\newtheorem{lemma}{Lemma}%
\setcitestyle{numbers,square,sort&compress} 

\begin{document}

\title[A successive difference-of-convex method for a class of  two-stage nonconvex  nonsmooth  stochastic conic program via SVI]{A successive difference-of-convex method for a class of  two-stage nonconvex  nonsmooth  stochastic conic program via SVI}


\author*[1,2]{\fnm{Chao} \sur{Zhang}}\email{chzhang2@bjtu.edu.cn}

\author[1]{\fnm{Di} \sur{Wang}}\email{wangdi@bjtu.edu.cn}
\affil[1]{\orgdiv{School of Mathematics and Statistics}, \orgname{Beijing Jiaotong University}, \city{Beijing}, \postcode{100044}, \country{P.R. China}}

\affil[2]{\orgdiv{Beijing Key Laboratory of Biological Big Data and Topological Statistics}, \orgname{Beijing Jiaotong University}, \city{Beijing}, \postcode{100044}, \country{P.R. China}}









\abstract{	We consider a class of two-stage   nonconvex nonsmooth  stochastic conic program,  where the objective functions in both stages can contain nonsmooth terms
		that are 
		functions with easily computed proximal mappings, further composed with affine mappings. This kind of problem is capable of modeling various applications. Solving these problems, however, can be challenging due to the two-stage structure with possibly large number of scenarios,  the nonconvex, nonsmooth and even non-Lipschitz discontinuous terms, as well as  the  conic constraints. In this paper, we define a KKT point of the problem, show that it is a necessary optimality condition under mild conditions, and transform it to an equivalent nonmonotone nonsmooth two-stage stochastic variational inequality (SVI).   We then propose a successive difference-of-convex (SDC) method by making use of Moreau envelope to solve it, the subproblems of which are  approximately solved by the progressive hedging method for solving  maximal monotone two-stage SVI.
		We show the rigorous convergence of our method under suitable assumptions. 
		An extension of Markowitz's mean-variance model is provided
		as an application and numerical results on it demonstrate the effectiveness of the model and the SDC method.}

\keywords{Two-stage nonconvex  nonsmooth stochastic conic program, Stochastic variational inequality, Difference-of-convex method, Progressive hedging method  }


\pacs[MSC Classification]{90C15,90C33,49J40}

\maketitle

\section{Introduction}\label{sec1}

	Given the probability space $(\Omega, {\cal F},P)$ with the finite support set $\Xi_K:=\{\xi_1,\ldots,\xi_K\}$
		for any $\xi_i\in\Xi_K$, let us denote the probability  $p(\xi_i)= p_i$ for each $i\in [K]:=\{1,2,\ldots,K\}$.
Let $r_1(\cdot): \mathbb{R}^{m_{1}}\rightarrow \mathbb{R}$, $r_{\xi_i}(\cdot) : \mathbb{R}^{m_2}\to \mathbb{R}$, $i\in [K]$
		be  closed, lower semicontinuous (lsc), lower bounded functions defined by      
		\begin{eqnarray}\label{r-r} r_1(x) =  P_1(U_1x + u_1),\quad r_{\xi_i}(y_i) =  P_{\xi_i}(U_{\xi_i}y_i+u_{\xi_i}),~ i\in [K],
		\end{eqnarray} 
		where  
		$U_1$, $U_{\xi_i}$, $i\in [K]$ are given matrices, $u_1$, $u_{\xi_i}$, $i\in [K]$ are given vectors, and $P_1$, $P_{\xi_i}$, $i\in [K]$ are given mappings of appropriate dimensions.

	We consider the two-stage nonconvex  nonsmooth stochastic conic program (T-NNS-SCP)  in the following form.  
	\begin{subequations}	
		\begin{equation}\label{stage1}		
			\begin{aligned}
				\min_{x\in {\mathbb{R}}^{m_1}} \ \ &
				c(x)+r_1(x)+ \sum_{k=1}^K p_i[\vartheta(x,\xi_i)]\\
				\mathrm{s.t.} \ \ ~&Ax= a,\\
				&B x - b\in -C_{1},
			\end{aligned}
		\end{equation} 
		where  
		$\vartheta(x,\xi_i)$ is the optimal value of the second-stage problem
		\begin{equation}\label{stage2}
			\begin{aligned}
				\min_{y_i\in \mathbb{R}^{m_{2}}}\ \ &q_{{\xi_i}}(y_i) + r_{\xi_i}(y_i)\\
				\mathrm{s.t.}~~ \ \      & A_{1,\xi_i}y_i+A_{2,\xi_i}x=d_{\xi_i}, \\
				&W_{\xi_i} y_i+T_{\xi_i}x - h_{\xi_i} \in -C_{2,\xi_i}.
			\end{aligned}
		\end{equation}
	\end{subequations}
Here, $A\in\mathbb{R}^{n_{1}\times m_1}$, $B \in \mathbb{R}^{s_1 \times m_1}$, $A_{1,\xi_i}\in \mathbb{R}^{n_{2}\times m_2}$,  $A_{2,\xi_i}\in  \mathbb{R}^{n_{2}\times m_1}$, $W_{\xi_i} \in \mathbb{R}^{s_2 \times m_2}$, and $T_{\xi_i} \in \mathbb{R}^{s_2 \times m_1}$, $i \in [K]$ are given matrices; 
	$a \in \mathbb{R}^{n_{1}}$, $b\in \mathbb{R}^{s_1}$, 
	$d_{\xi_i} \in \mathbb{R}^{n_{2}}$, and $h_{\xi_i} \in \mathbb{R}^{s_2}$, $i\in [K]$ are given vectors; and  $C_{1}\subseteq \mathbb{R}^{s_{1}}$, $C_{2,\xi_i} \subseteq  \mathbb{R}^{s_{2}}$, $i\in [K]$ are symmetric closed convex cones with nonempty interior, any  of which can be a second-order cone, a nonnegative cone, a cone of positive semidefinite matrices, or a Cartesian product of symmetric cones of the  above three types.
	
	Let us denote the feasible set of the first-stage problem (\ref{stage1}) by $X$, the feasible set of the second-stage problem (\ref{stage2}) by $Y(x,\xi_i)$ for each fixed $(x, \xi_i) \in X \times \Xi_K$, and define 
	\begin{eqnarray}\label{Phi}
		\Phi:=\{(x,y_1,\ldots,y_K)\in \mathbb{R}^{m_1 + m_2 K}\ :\ x\in X, ~y_i\in Y(x,\xi_i),~  i\in [K]\}. 
	\end{eqnarray}
	We assume the following assumptions throughout the paper:

\begin{description}
  \item[\bf{A1.}] $P_1$, $P_{\xi_i}$, $i\in [K]$ are closed lsc functions bounded from below, and have easily computed proximal operators.
  
  \item[\bf{A2.}] $c:\mathbb{R}^{m_1}\to\mathbb{R}$ and 
  $q_{\xi_i}:\mathbb{R}^{m_2}\to\mathbb{R}$, $i\in[K]$ are twice continuously differentiable convex functions.
  
  \item[\bf{A3.}] $\Phi$ is a compact set.
  
  \item[\bf{A4.}] For any $(x,\xi_i)\in X\times\Xi_K$, 
  $\sum_{i=1}^K p_i|\vartheta(x,\xi_i)|<+\infty$.
  
  \item[\bf{A5.}] The set $\Phi$  has a Slater point: there exists $(\mathring{x},\mathring{y}_1,\ldots,\mathring{y}_K)$ such that $A \mathring{x} = a$, $A_{1,\xi_i} \mathring{y}_i + A_{2,\xi_i} \mathring{x} = d_{\xi_i}$, $B \mathring{x} -b \in {\rm{int}}({-C_1}), W_{\xi_i}\mathring{y}_i + T_{\xi_i} \mathring{x} - h_{\xi_i} \in {\rm{int}} (-C_{2,\xi_i})$, $i\in [K]$. 	
\end{description}

		The T-NNS-SCP model in (\ref{stage1})-(\ref{stage2}) is promising for modeling important applications under uncertainty. It inherits the merits of two-stage stochastic programming (SP) to deal with uncertainty,  stemming from its ability to make ``here-and-now'' decisions while explicitly accounting for future uncertain events in the first-stage, as well as the ``wait-and-see'' decisions after the uncertainty is resolved in the second-stage.
	Moreover, it can encompass conic constraints, so  it is capable of capturing complex constraints involving nonnegative, second-order, and positive semidefinite cones. 
Most importantly, it allows both the first- and the second-stage objective functions to have nonconvex  nonsmooth regularizers that can be even non-Lipschitz and discontinuous. The nonsmooth terms $P_1$ and $P_{\xi_i}$, $i\in [K]$ have easily computed proximal mappings, and can be further composed with affine maps. Therefore, the composite functions $r_1$ and $r_{\xi_i}$ may not have easily computed proximal terms, which enlarges the range of nonsmooth terms.

Mehrotra and \"Ozevin	
\cite{Sanjay} considered a convex and smooth two-stage stochastic conic programming which is a special case of T-NNS-SCP in (\ref{stage1})-(\ref{stage2}), where $r(x) \equiv  0$, $r_{\xi_i}(y_i) \equiv 0$, $i\in [K]$, $c(\cdot)$ and $q_{\xi_i}(\cdot)$, $i\in [K]$ are convex quadratic functions. A practical primal interior-point decomposition algorithm was developed to solve the problem.  Numerical experiment on an interesting two-stage SP extension of the Markowitz portfolio selection model is given to study the performance of the proposed method. 
	In many applications, nonconvex nonsmooth and even non-Lipschitz discontinuous regularizers are beneficial for inducing desired sparsity or other proper penalties, and the objective functions in the two stages are not quadratic. This motivates us to consider the new T-NNS-SCP model  in this paper.

Although the T-NNS-SCP 
is potential to model important applications, how to solve it efficiently is not an easy task due to the the two-stage structure with possibly large number of scenarios $K$, nonconvex, nonsmooth and even non-Lipschitz discontinuous regularizers in both the first- and the second-stage objective functions, and the complex conic constraints.
The constraints in (\ref{stage2}) are blockwise separable in $y_1, y_2,\ldots, y_K$, and therefore if we write the two-stage problem as one-stage problem of both the first- and the second-stage variables, there is a block-angular structure between the first- and second-stage variables. For such a specially structured problem,  decomposition methods such as the progressive hedging method (PHM)
developed by  Rockafellar and Wets
\cite{RW1991} 
	can be efficient.  However, as pointed out in \cite{Cui}, the convergence of PHM is largely restricted to convex problems and convex-related extensions \cite{zhangLP, ZhangMin}.
    This bottleneck restricts the employment of PHM to general nonconvex two-stage SP problems. 
   The problem of solving  nonconvex two-stage stochastic programming problems has attracted considerable interest in recent years \cite{tssp,Cui,LiuJ,LiuA}.
    Several studies focus on providing an appropriate approximation of the value function at the latest first-stage iterate for each scenario, and then using the approximations to generate the next  first-stage iterate.  
When there are no explicit nonsmooth terms 
in the objectives, and the convex constraints are constructed by linear equations and nonlinear inequalities,  a novel
smoothing method proposed  by Borges, Sagastizábal, and Solodov 
\cite{tssp} can deal with the possibly nonconvex value function of the second-stage, by adding  the Tikhonov-regularized barrier of the inequality constraints of the second-stage.
Recently, Li and Cui 
\cite{Cui}  leveraged the implicit  convex-concave structure of the recourse function, and obtained surrogates of the value function by solving a so-called partial Moreau envelope, which is a  convex subproblem under the  original second-stage constraints at each iterate.

From another point of view, the SP is closely related to the SVI.
The SVI provides a unified mathematical model for optimality conditions of stochastic optimization and stochastic games, and therefore has wide applications in science, engineering, economics, and finance.
\cite{TK} investigated a class of two-stage SVIs, and employed  expected residual minimization to reformulate the two-stage SVI as a two-stage SP with recourse. Rockafellar and Wets first gave  a  definition of multistage SVIs in 
\cite{RoWets}, while Rockafellar and Sun later proposed the PHM for solving  monotone multistage SVIs in    \cite{RoSun}.
The discretization approximation of the two-stage SVI has been investigated by 
\cite{CSS,SunHailin}, 
where the underlying random data are continuously distributed.
Solving the two-stage SP via the SVI approach has been studied in \cite{Lizhang}, where there is no nonsmooth term in the objective function as we consider in this paper, and the two-stage SP in \cite{Lizhang} is convex and its equivalent two-stage SVI is maximal monotone and consequently the PHM can be used to solve it. 
A two-stage nonsmooth stochastic equilibrium model  was proposed for the storage and dynamic distribution of medical supplies in epidemics by \cite{Lizhang2}. By smoothing the nonsmooth convex penalty term for inadequate medical supplies that involves the max operator, the optimistic version of the smoothing problem is equivalently transformed into a two-stage SP, with lots of inequality constraints. The two-stage SP is further transformed equivalently to a smooth two-stage SVI that is of maximal monotone, and the PHM can be employed  to approximately solve it. 

In this paper, we design a novel SDC method to solve the  T-NNS-SCP model 
via the SVI approach, and show that there exists an accumulation point of the SDC method, and any accumulation point provides a KKT point of the original T-NNS-SCP model  under mild assumptions.
To be specific, we first investigate the optimality conditions of the T-NNS-SCP 
and  transform it into an equivalent nonconvex nonsmooth  two-stage SVI. 
Motivated by \cite{Liu}, we use the Moreau envelope of each nonsmooth term 
to obtain an approximate problem, which is easy to obtain through proximal mappings. 
Then in each inner iteration for approximately solving the approximate problem,  we further linearize the second term in the DC reformulation of the Moreau envelope, add regularization terms at the current point, and consequently obtain a smooth and convex two-stage SP.
This problem is equivalent to a maximal monotone two-stage SVI, and is  
approximately solved by the well-known PHM. 
We refer to the combination of SDC and PHM as SDC-PHM for short. 
By employing carefully designed stopping criteria for the approximate solution of the PHM subproblems, an inexact solution strategy for the approximate problem, and a suitable updating rule for the parameter in the Moreau envelope, we establish the convergence of the SDC method under suitable assumptions.
Compared to solving the two-stage SP directly, the merits of solving it via the SVI approach are as follows. Firstly, the method avoids generating implicit nonsmoothness induced by the value function of the second-stage problem. Secondly, it addresses both the primal  and the dual variables (multipliers), so that the SDC method is able to  generate a subsequence that converges to  a KKT point, which is stronger than the limiting  stationary point. 
Thirdly, in each iteration, the SVI corresponding to the approximate problem is solved only roughly, and the iterate points are allowed to be infeasible which  helps reduce computational costs, particularly when dealing with intricate constraints such as linear equations and general conic constraints. The proposed SDC-PHM method can be viewed as an extension of the well-known PHM to a rather general setting of nonconvex and nonsmooth two-stage SP or SVI problems. It is worth noting that the two-stage SVI subproblem in the SDC framework can also be addressed by other efficient two-stage SVI solvers and is not limited to the PHM.

The paper is organized as follows.
In Sect. 2,  we provide notation, some basic definitions and results.
In Sect. 3,  we analyze the T-NNS-SCP, define a KKT point for the T-NNS-SCP that is necessary for optimality under
mild assumptions, and  reformulate the KKT point into an equivalent nonmonotone nonsmooth two-stage SVI.
In Sect. 4,  
we develop the SDC method, and adopt the PHM to solve the subproblems in SDC. We show that SDC-PHM is well-defined and the convergence of SDC is guaranteed 
under suitable assumptions.
In Sect. 5, a nonconvex nonsmooth two-stage extension of Markowitz's mean-variance model is given as an application of the T-NNS-SCP. Numerical experiments are conducted to evaluate both the proposed model and the SDC method.

\section{Notation and preliminaries}

Throughout the paper, we use the following notation. All vectors are assumed to be column vectors. 
The transpose of a matrix \( W \) is denoted by \( W^T \), the $i$-th component of a vector $w$ is denoted by $(w)_i$,
and the Euclidean norm of $w$ is denoted by \( \| w \| \). For $g_i \in \mathbb{R}^{t_i}$, $i\in [K]$, $$(g_i; i\in [K]):=(g_1;g_2;\ldots;g_K) := (g_1^T,g_2^T,\cdots,g_K^T)^T\in \mathbb{R}^{t_1+t_2+\ldots+t_K}.$$  
Given $w \in \Omega$, the tangent  cone  to $\Omega$ at $w$ is denoted as $T_{\Omega}(w)$ \cite[Definition 6.1, pp. 197]{Variational}, and the normal cones in the sense of  regular (also called Fréchet) and Mordukhovich (also called limiting or basic) are denoted as $\hat{N}_{\Omega}(w)$ and $N_{\Omega}(w)$, respectively \cite[Definition 6.3, pp. 199]{Variational}. 
For a cone $\hat{C}$, $	\hat{C}^{o}=\{w : \langle w,w'\rangle\leq 0, {\rm ~for~any~} w'\in \hat{C}\}$ is its
polar cone. 
The domain of a function \( f \) is given by  
$
\operatorname{dom} f := \{w \in \mathbb{R}^{n} : f(w) < +\infty\}$.
The indicator function \( \delta_{\Omega}(w) \) is defined as
$\delta_{\Omega}(w) =0$ if  $w \in \Omega$, and $\delta_{\Omega}(w) = +\infty$ otherwise.
For a differentiable mapping \( L : \mathbb{R}^n \to \mathbb{R}^m \), we denote its
Jacobian matrix at 
\( w \) by \( \mathcal{J}L(w) \in \mathbb{R}^{m \times n}\) and $\nabla L(w):=\mathcal{J}L(w)^T$.   We use $\mathcal{J}_{\textbf{s}}L(w)$ to denote the partial derivative of the mapping $L(w)$ with respect to \textbf{s}, where each component of \textbf{s} comes from the vector $w$.  Given cones $\hat{C}_i$, $i\in [K]$ we denote by $\prod_{i=1}^K \hat{C}_i$ 
the Cartesian product of  cones $\hat{C}_i$, $i\in [K]$, and $(\hat{C})^{\otimes K}:= \underbrace{\hat{C} \times \cdots \times \hat{C}}_{K}$.	

According to 
{\cite[pp. 333,  pp. 366]{Shapiro}}, 
a function $f:\mathbb{R}^n\rightarrow \bar{\mathbb{R}}:=[-\infty,\infty]$ is said to be lower semicontinuous  (lsc) at a point $\hat w\in\mathbb{R}^n$ if $f(\hat w)\leq\liminf_{w\rightarrow \hat{w}}f(w)$. It is said that $f$ is lsc if it is lsc  at every point of $\mathbb{R}^n$. 
By \cite[Definition 8.3, pp. 301]{Variational}, 
for any nonconvex function $f :\mathbb{R}^n\rightarrow \bar{\mathbb{R}}$ and any $\bar{w}\in{\rm dom} f$, the Fréchet, limiting, and horizon subdifferentials  are defined, respectively, as
\begin{equation*}
\begin{aligned}
& \hat{\partial} f(\bar{w}):=\left\{v \in \mathbb{R}^n: {\rm{liminf}}_{w \rightarrow \bar{w},\ w \ne \bar{w}} \frac{f(w)-f(\bar{w})-v^T(w-\bar{w})}{\|w-\bar{w}\|} \geq 0\right\}, \\
& \partial f(\bar{w}):=\left\{v \in \mathbb{R}^n: \exists~ w^k \xrightarrow{f} \bar{w},  ~v^k \rightarrow v~{\rm with}~v^k \in \hat{\partial} f(w^k) {\rm~for~each~}k\right\},\\
&\partial^{\infty} f(\bar{w}):=\left\{v \in \mathbb{R}^n: \exists~ w^k \xrightarrow{f} \bar{w},~ \alpha^k\downarrow 0,~
\alpha^kv^k \rightarrow v~{\rm with}~v^k \in \hat{\partial} f(w^k) {\rm~for~each~}k\right\},	
\end{aligned}
\end{equation*}
where the notation $w^k \xrightarrow{f} \bar{w}$ means that $w^k \rightarrow \bar{w}$ and $f(w^k) \rightarrow f(\bar{w})$. We also define $ \partial f(\bar{w})=\partial^{\infty} f(\bar{w}):=\emptyset$ when $\bar{w}\notin {\rm dom} f$. 
If in addition 
$f(\bar w)$ is finite, then according to \cite[Proposition  8.7, pp. 302]{Variational}, 
$\partial f$ and $\partial^{\infty}f$ are outer semicontinuous, i.e., 
\begin{equation*}\label{subdifferential}
\begin{aligned}
& \left\{v \in \mathbb{R}^n: \exists~ w^k \xrightarrow{f} \bar{w}, ~v^k \rightarrow v~{\rm with}~v^k \in \partial f(w^k) {\rm~for~each~}k \right\}\subseteq\partial f(\bar{w}),\\
&\left\{v \in \mathbb{R}^n: \exists~ w^k \xrightarrow{f} \bar{w},~ \alpha^k\downarrow 0,~
\alpha^kv^k \rightarrow v,~{\rm with}~v^k \in \partial f(w^k) {\rm~for~each~}k\right\} \subseteq\partial^{\infty} f(\bar{w}).
\end{aligned}
\end{equation*}
It is known that $\hat{\partial} f(w) \subseteq \partial f(w)$ (\cite[Theorem 8.6, pp. 302]{Variational}). 
It follows from \cite[Theorem 9.13, pp. 358]{Variational}
that $f$ is Lipschitz
continuous at $\bar{w}$ if and only if 	$\partial^{\infty}f(\bar{w})=\{0\}$.
Moreover, if $f$ is continuously differentiable on a neighborhood of $\bar{w}$, then $\partial f(\bar{w})=\{\nabla f(\bar{w})\}$ according to \cite[Exercise 8.8(b), pp. 304]{Variational}. 

For any real-valued closed lsc function $P_0$ that is bounded below and any $\rho >0$, the Moreau envelope \cite[Definition 1.22, pp. 20]{Variational} is defined as:
\begin{eqnarray*}
P_{0,\rho}(w):=\inf_{w'}\left\{\frac{1}{2\rho}\|w'-w\|^2+P_0(w')\right\}.\nonumber
\end{eqnarray*}
The infimum in the definition of the Moreau envelope is attained at the so-called proximal mapping of $\rho P_0$ at $w$, which is defined as
\begin{equation}\label{proximal}
{\rm prox}_{\rho P_0}(w):=\mathop{\mathrm{arg\,min}}_{w'}\left\{\frac{1}{2\rho}\|w'-w\|^2+P_0(w')\right\}.\nonumber
\end{equation}
This set is always nonempty because $P_0$ is proper closed and bounded below, according to  \cite[Theorem 1.25, pp. 21]{Variational}.
It is easy to see that for any $w$,
\begin{eqnarray}\label{rho-rhot}
-\infty  <  \inf_{w'} P_0(w') \le P_{0,\rho}(w) \le P_0(w).
\end{eqnarray} 
Furthermore, we have the following lemma from \cite[Lemma  1]{Liu}.
\begin{lemma}\label{prox}
Let $P_0$ be a real valued closed function with $\inf P_0 > -\infty$.
Suppose that $w^t \rightarrow w^*, \rho_t \downarrow 0$ and pick any $\zeta^t \in \operatorname{prox}_{\rho_t P_0}\left(w^t\right)$ for each $t$. Then it holds that 
$\zeta^t \rightarrow w^*$.
\end{lemma}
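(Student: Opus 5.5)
The plan is to compare the value of the proximal objective at the minimizer $\zeta^t$ with its value at the candidate point $w^t$. Since $\zeta^t \in \operatorname{prox}_{\rho_t P_0}(w^t)$, optimality gives
\begin{equation*}
\frac{1}{2\rho_t}\|\zeta^t - w^t\|^2 + P_0(\zeta^t) \le P_0(w^t).
\end{equation*}
Because $P_0$ is bounded below, we have $P_0(\zeta^t) \ge \inf P_0 > -\infty$. Rearranging then yields
\begin{equation*}
\|\zeta^t - w^t\|^2 \le 2\rho_t\bigl(P_0(w^t) - \inf P_0\bigr).
\end{equation*}

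The key step is to bound $P_0(w^t)$ uniformly in $t$. Here I will use that $P_0$ is real valued, so $w^t$ always lies in $\operatorname{dom} P_0$, together with the convergence $w^t\to w^*$. Closedness (lower semicontinuity) alone gives only a lower bound on $\liminf P_0(w^t)$, not an upper bound. So I expect this uniform upper bound to be the main obstacle.

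In the intended setting, the functions $P_0$ used later in the paper are continuous, or at least locally bounded above on bounded sets; the latter holds for instance for the $\ell_0$-type and capped penalties that are finite everywhere with bounded range. I will therefore argue as in \cite[Lemma 1]{Liu}. Since $\{w^t\}$ is bounded, I take $M:=\sup_t P_0(w^t)<\infty$, invoking local boundedness of the real-valued $P_0$ on the bounded set containing $\{w^t\}$.

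If that supremum cannot be guaranteed directly, I will avoid the issue by a fixed-comparison-point variant. Comparing with $w^*$ instead of $w^t$ gives
\begin{equation*}
\frac{1}{2\rho_t}\|\zeta^t-w^t\|^2 + \inf P_0 \le \frac{1}{2\rho_t}\|w^*-w^t\|^2 + P_0(w^*).
\end{equation*}
Hence
\begin{equation*}
\|\zeta^t-w^t\|^2\le \|w^*-w^t\|^2 + 2\rho_t\bigl(P_0(w^*)-\inf P_0\bigr).
\end{equation*}
The right-hand side tends to $0$ because $w^t\to w^*$, $\rho_t\downarrow 0$, and $P_0(w^*)$ is finite. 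This fallback needs only that $P_0$ is real valued at the single point $w^*$, so it sidesteps the obstacle entirely.

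Finally, the triangle inequality $\|\zeta^t-w^*\|\le\|\zeta^t-w^t\|+\|w^t-w^*\|\to 0$ gives $\zeta^t\to w^*$.
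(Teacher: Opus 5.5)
Your fixed-comparison-point argument is correct and is the standard proof of \cite[Lemma 1]{Liu}; the paper gives no proof of its own and only cites that lemma. Testing the prox objective at $w^*$ gives $\|\zeta^t-w^t\|^2\le\|w^*-w^t\|^2+2\rho_t\bigl(P_0(w^*)-\inf P_0\bigr)\to 0$, and the triangle inequality finishes. Present that as the proof and drop the first route: a real-valued lsc function bounded below need not be bounded above near a point, so the ``local boundedness'' you invoke is false under the stated hypotheses. For example, take $P_0(w)=1/\|w\|$ for $w\neq 0$ and $P_0(0)=0$, with $\|w^t\|=\rho_t^2$; then $w^t\to 0$ but $\rho_t P_0(w^t)=1/\rho_t\to\infty$.
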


The Moreau envelope can be written as a DC function:
\begin{equation}\label{r_rho}
P_{0,\rho}(w)=\frac{1}{2\rho}\|w\|^2-\underbrace{\max _{w'} 
	\left\{
	\frac{1}{\rho} w^T w'-\frac{1}{2 \rho}\|w'\|^2-P_0(w')
	\right\}}
_{D_{\rho,P_0}(w)},
\end{equation}
where $D_{\rho,P_0}$ is a convex function because it is the maximum of affine functions of $w$. Moreover, using the definition of $P_{0, \rho}(w)$, ${\rm prox}_{\rho P_0}(w)$ and (\ref{r_rho}), we see that the maximum in $D_{\rho, P_0}(w)$ is obtained at any point in ${\rm prox}_{\rho P_0}(w)$.

The lemma  below gives a way to compute a subgradient  of $D_{\rho,P_0}(w)$ as well as an element of limiting subdifferential of $P(w)$ from {\rm\cite[Lemma 1]{Xu}}.
\begin{lemma}\label{subgradient}
$\frac{1}{\rho}{\rm prox}_{\rho P_0}(w)\subseteq\partial D_{\rho,P_0}(w)$, and
$\{\frac{1}{\rho}(w-\zeta):\zeta\in{\rm prox}_{\rho P_0}(w)\}\subseteq {\partial} P_0(\zeta)$.
\end{lemma}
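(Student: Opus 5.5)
Two inclusions are claimed, and I will prove them separately. Both rest on one observation: if $\zeta\in{\rm prox}_{\rho P_0}(w)$, then $\zeta$ attains the maximum defining $D_{\rho,P_0}(w)$ in (\ref{r_rho}). In particular
\[
D_{\rho,P_0}(w)=\tfrac1\rho w^T\zeta-\tfrac1{2\rho}\|\zeta\|^2-P_0(\zeta),
\]
which the paper notes directly after (\ref{r_rho}).

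\textbf{First inclusion.} Fix any $w'$. The point $\zeta$ is an admissible choice in the maximum defining $D_{\rho,P_0}(w')$, so
\[
D_{\rho,P_0}(w')\ge \tfrac1\rho (w')^T\zeta-\tfrac1{2\rho}\|\zeta\|^2-P_0(\zeta)=D_{\rho,P_0}(w)+\tfrac1\rho\zeta^T(w'-w).
\]
This is exactly the convex subgradient inequality. Since $D_{\rho,P_0}$ is convex and finite, being a supremum of affine functions bounded above by $\frac1{2\rho}\|w\|^2-\inf P_0$, its convex subdifferential coincides with the limiting one. Hence $\frac1\rho\zeta\in\partial D_{\rho,P_0}(w)$.

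\textbf{Second inclusion.} I will show that $v:=\frac1\rho(w-\zeta)$ lies in the Fréchet subdifferential $\hat\partial P_0(\zeta)$. This suffices, because $\hat\partial P_0\subseteq\partial P_0$. By minimality of $\zeta$ in the proximal problem, for every $u$,
\[
P_0(u)+\tfrac1{2\rho}\|u-w\|^2\ge P_0(\zeta)+\tfrac1{2\rho}\|\zeta-w\|^2 .
\]
Expanding the squares gives
\[
\|u-w\|^2-\|\zeta-w\|^2=2(\zeta-w)^T(u-\zeta)+\|u-\zeta\|^2 ,
\]
and therefore
\[
P_0(u)-P_0(\zeta)-v^T(u-\zeta)\ge -\tfrac1{2\rho}\|u-\zeta\|^2 .
\]
Dividing by $\|u-\zeta\|$ and letting $u\to\zeta$, the liminf in the definition of $\hat\partial P_0(\zeta)$ is $\ge 0$. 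Finiteness of $P_0(\zeta)$ holds because $P_0$ is real valued, so $\zeta\in{\rm dom}\,P_0$.

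\textbf{Main obstacle.} Both parts reduce to the variational characterization of $\zeta$, so there is no real difficulty. The only point needing care is that the subdifferential of the convex function $D_{\rho,P_0}$ is the convex one, which is consistent with the limiting one by \cite[Proposition 8.12]{Variational}.
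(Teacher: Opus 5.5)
Your proof is correct. The paper gives no argument of its own for this lemma; it imports it from Lemma 1 of \cite{Xu}. The standard proof of the second inclusion uses Fermat's rule together with the sum rule for a smooth summand \cite[Theorem 10.1, Exercise 8.8(c)]{Variational}: since $\zeta$ minimizes $P_0+\frac{1}{2\rho}\|\cdot-w\|^2$, one gets $0\in\hat\partial\bigl(P_0+\frac{1}{2\rho}\|\cdot-w\|^2\bigr)(\zeta)=\hat\partial P_0(\zeta)+\frac1\rho(\zeta-w)$. Your expansion of the squares is exactly this computation done by hand.

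Your version has two advantages. It is self-contained, and it gives a slightly stronger conclusion: $\frac1\rho(w-\zeta)$ is a proximal subgradient, since your quadratic lower bound $-\frac{1}{2\rho}\|u-\zeta\|^2$ holds globally. Hence it is a Fréchet subgradient, and hence a limiting one. Likewise, your first inclusion is just the elementary subgradient inequality for a supremum of affine functions evaluated at a maximizer, so no Danskin-type result is needed.

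One detail could be tightened. For finiteness of $D_{\rho,P_0}$ you only state the upper bound. The lower bound follows from your first display: the maximum is attained at $\zeta$, and $P_0(\zeta)$ is finite.
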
 

Let $U_0$ and $u_0$ be arbitrary given matrices and vectors of appropriate size, and let $r_0$ be an arbitrary proper lsc composite function in the form of 
\begin{eqnarray*}
r_0(s)= P_0(U_0s +u_0).
\end{eqnarray*}
Denote
\begin{eqnarray}\label{R0rho}
R_{0,\rho}(s) := D_{\rho, P_0}(U_0s + u_0)\quad \mbox{and}\quad r_{0,\rho}(s) := P_{0,\rho}(U_0s+u_0).
\end{eqnarray}
Using similar arguments to \cite[Section 3.1]{Liu}, we can show that 
\begin{eqnarray}\label{8}
\frac{1}{\rho} U_0^T {\rm prox}_{\rho P_0}(U_0s + u_0) \subseteq U_0^T \partial D_{\rho,P_0}(U_0s+u_0) = \partial R_{0,\rho}(s).
\end{eqnarray}

\section{ Optimality condition for T-NNS-SCP}

In this section, we first describe  the 
T-NNS-SCP in the form of a large-scale conic program  and show the existence of solutions. 

We define a KKT point of the T-NNS-SCP, which is a necessary optimality condition
under  the Robinson constraint qualification (RCQ). We eventually transform it equivalently to a two-stage nonconvex nonsmooth SVI. This motivates us to design our SDC algorithm via SVI in the next section.
To express the RCQ and the SVI in a concise way, 
we define the following notation.
\begin{table}[htbp]
\centering
\begin{tabular}{p{8cm} p{4cm}}
	\toprule
	\textbf{Notation} & \textbf{Description} \\
	\midrule
	$ \mathcal{K} := \{0_{n_1}\} \times -C_1 \times (\{0_{n_2}\})^{\otimes K} \times \prod_{i=1}^K -C_{2,\xi_i}$ & convex cone relating to constraints of both stages \\
	$ D := \mathbb{R}^{m_1}\times (\mathbb{R}^{m_2})^{\otimes K}
	\times \mathcal{K}^{\circ}$
	& convex cone relating to primal and dual variables \\
	\midrule
	$\mathbf{y}: = (y_i;i\in [K])\in (\mathbb{R}^{m_2})^{\otimes K}$ &  vector of second-stage primal variables for all scenarios \\
	${\bf x}_{\bf y}:=(x;\mathbf{y}) \in \mathbb{R}^{m_1}\times (\mathbb{R}^{m_2})^{\otimes K}$ &  vector of primal variables in both stages\\
	$M:= (\alpha_1;\alpha_2;\pi_{1,i};i\in [K];\pi_{2,i};i\in [K]) \in 
	{\cal K}^{\circ}$ &  vector of dual variables in both stages\\
	$\mathbf{z}: = 
	({\bf x}_{\bf y};M)\in D$ &  vector of primal and dual variables in both stages\\
	\midrule
	$\hat{f}(\mathbf{x}_{\mathbf{y}}): = c(x)+\sum_{i=1}^K p_i q_{\xi_i}(y_i)$ & smooth part of objective functions in both stages\\
	$\Psi(\mathbf{x}_{\mathbf{y}}):= r_1(x) + \sum_{i=1}^{K} p_i r_{\xi_i}(y_i)$ & nonsmooth part of objective functions in both stages\\
	${\cal U}({\bf x}_{\bf y}) := (U_1 x+u_1;U_{\xi_i} y_i + u_{\xi_i}; \forall i\in [K])$ & vector-valued function for inner mapping of $\Psi$\\
	$
	P({\cal U}(\textbf{x}_{\textbf{y}})):= P_1(U_1 x + u_1)+ \sum_{i=1}^K p_i P_{\xi_i}(U_{\xi_i} y_i + u_{\xi_i})$ & composite expression of $\Psi({\bf x}_{\bf y})$\\
	\vspace{1mm}
	$G({\bf x}_{\bf y}) := \left(
	\begin{array}{c}
		G^1({\bf x}_{\bf y}) := Ax-a\\
		G^2({\bf x}_{\bf y}) := Bx-b\\
		G^{1,\xi_i} ({\bf x}_{\bf y}):= A_{1,\xi_i} y_i + A_{2,\xi_i} x - d_{\xi_i}; \forall i\in [K]\\
		G^{2,\xi_i}({\bf x}_{\bf y}) := W_{\xi_i} y_i + T_{\xi_i }x -h_{\xi_i}; \forall i\in [K]
	\end{array}
	\right)$ & \vspace{1mm}
	vector-valued function relating to constraints of both stages\\
	\bottomrule
\end{tabular}
\caption{notation description}
\label{tab:symbols}
\end{table}

\begin{proposition}\label{prop3.1}The T-NNS-SCP {\rm(\ref{stage1})-(\ref{stage2})} is equivalent to the following
nonconvex nonsmooth stochastic conic program 
\begin{equation}\label{new_prob}
\begin{aligned}
	\min_{\bf{x_y}} &~{\hat f}({\bf x}_{\bf y})+
  \Psi({\bf x}_{\bf y})
	\\
	\mathrm{s.t.}&~ G(	\bf{x_y})\in \mathcal{K}.
\end{aligned}
\end{equation}
The equivalence is understood in the sense that optimal values of  problem  T-NNS-SCP in {\rm(\ref{stage1})-(\ref{stage2})} and {\rm(\ref{new_prob})} are equal to each other and finite. 
Moreover, the set of optimal solutions of the  T-NNS-SCP in {\rm(\ref{stage1})-(\ref{stage2})}
is nonempty and bounded.
\end{proposition}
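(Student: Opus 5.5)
The plan is to prove the equivalence by the standard interchange of minimization and summation for finitely many scenarios, and then to get existence from compactness of $\Phi$ and lower semicontinuity of the objective.

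First, note that by the definitions in Table~\ref{tab:symbols}, the condition $G({\bf x}_{\bf y})\in\mathcal{K}$ says exactly that $Ax=a$, $Bx-b\in -C_1$, $A_{1,\xi_i}y_i+A_{2,\xi_i}x=d_{\xi_i}$ and $W_{\xi_i}y_i+T_{\xi_i}x-h_{\xi_i}\in -C_{2,\xi_i}$ for all $i\in[K]$. Hence the feasible set of (\ref{new_prob}) is precisely $\Phi$ in (\ref{Phi}). The objective is $\hat f+\Psi = c(x)+r_1(x)+\sum_i p_i\,(q_{\xi_i}(y_i)+r_{\xi_i}(y_i))$.

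Second, fix $x\in X$. Since the constraints of (\ref{stage2}) decouple across scenarios and each $p_i>0$,
\[
\inf_{\mathbf y:\,(x,\mathbf y)\in\Phi}\sum_{i=1}^K p_i\bigl(q_{\xi_i}(y_i)+r_{\xi_i}(y_i)\bigr)=\sum_{i=1}^K p_i\,\vartheta(x,\xi_i).
\]
This is the interchange of infimum and finite sum over a product set. By A4 the right-hand side is finite, so there is no $\infty-\infty$ issue. Taking the infimum over $x\in X$ then shows that the two optimal values coincide.

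Third, for existence and finiteness, use A1–A2. The functions $c$ and $q_{\xi_i}$ are continuous, and $P_1,P_{\xi_i}$ are lsc and bounded below, so $r_1,r_{\xi_i}$ are lsc as compositions with affine maps. The objective of (\ref{new_prob}) is therefore lsc and real-valued. On the compact set $\Phi$ (A3), which is nonempty by the Slater point in A5, Weierstrass' theorem gives a minimizer and a finite optimal value; the value is bounded below because the continuous part is bounded on the compact set and the nonsmooth part is bounded below.

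The main step to handle carefully is transferring solutions between the two formulations. If $(x^*,\mathbf y^*)$ solves (\ref{new_prob}), then each $y_i^*$ attains $\vartheta(x^*,\xi_i)$; otherwise replacing $y_i^*$ by a better feasible point would lower the objective. So $x^*$ solves (\ref{stage1}). Conversely, for any optimal $x^*$ of (\ref{stage1}), each second-stage problem has a minimizer: $Y(x^*,\xi_i)$ is compact because it is a section of the compact set $\Phi$, and the objective is lsc. Combining these minimizers gives an optimal point of (\ref{new_prob}). Thus the optimal solution set of the T-NNS-SCP, viewed either as the $x$-projection or as the full $(x,\mathbf y)$ set, is nonempty. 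It is bounded because it is contained in the compact set $\Phi$ or in its projection.
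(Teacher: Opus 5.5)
Your proof is correct and follows essentially the same route as the paper. You identify the feasible set of (\ref{new_prob}) with $\Phi$, get a minimizer from lower semicontinuity on the compact set $\Phi$, pass from an optimal $(x^*,\mathbf y^*)$ to an optimal first-stage $x^*$, and read off boundedness from A3. Along the way you make explicit several points the paper leaves implicit or omits: the inf--sum interchange (using A4 to avoid $\infty-\infty$), nonemptiness of $\Phi$ via A5, and the converse lifting of a first-stage solution. One small precision: $Y(x^*,\xi_i)$ is not literally a section of $\Phi$; it is compact because it is the projection of the section $\prod_{j}Y(x^*,\xi_j)$ of $\Phi$, which is compact and is nonempty by A4.
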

\begin{proof} 
We can easily get the optimal values of problem  T-NNS-SCP in (\ref{stage1})-(\ref{stage2}) and (\ref{new_prob}) are equal to each other.
Because the objective function in (\ref{new_prob}) is lower bounded and lsc, and according to assumptions A3 and A4, we can conclude that   the set of optimal solutions  of (\ref{new_prob}) is nonempty and bounded. By 
assumption A4, the optimal value of (\ref{new_prob}) is finite. 
Hence, the common optimal value of problems (\ref{stage1})-(\ref{stage2})  and (\ref{new_prob}) is finite. 
It is clear that if $\bar{\mathbf{x}}_{\bar{\mathbf{y}}}$ is an optimal solution of problem (\ref{new_prob}), then $\bar x$ is an optimal solution of (\ref{stage1}) and $\bar y_i $ is an optimal solution of the second-stage problem (\ref{stage2}) for all $i\in [K]$. 
Therefore, the solution set of T-NNS-SCP in  (\ref{stage1})-(\ref{stage2}) is nonempty. The boundedness of the solution set comes from the compactness of $\Phi = \{{\bf x}_{\bf y}\ :\ G({\bf x}_{\bf y}) \in {\cal K}\}$ in assumption A3.
\end{proof}

Let  ${\bf x}_{\bf y}^* \in \Phi$ be a given point. 
Clearly, $N_{\{0_{n_1 }\}}(G^1(\textbf{x}^*_{\textbf{y}}))=\mathbb{R}^{n_1}$, 
and $ N_{\{0_{n_2}\}}(G^{1,\xi_i}(\textbf{x}^*_{\textbf{y}}))=\mathbb{R}^{n_2}$ for any $i\in[K]$. 
According to \cite[Proposition 1.4, pp. 4]{BorisSMordukhovich}, we have
\begin{equation}\label{KK}
\begin{aligned}
&N_\mathcal{K}(G(\textbf{x}^*_{\textbf{y}}))
= \mathbb{R}^{n_1}\times N_{\{-C_{1}\}}(G^2(\textbf{x}^*_{\textbf{y}}))\times(\mathbb{R}^{n_2})^{\otimes K}\times\prod_{i=1}^K N_{\{-C_{2,{\xi_i}}\}}\left(G^{2,\xi_i}(\textbf{x}^*_{\textbf{y}})\right).\\
\end{aligned}
\end{equation}
By
\cite[(3.9), pp. 147]{Pertur}, 
we have
\begin{equation}\label{complement}
\begin{aligned}
M^*\in N_\mathcal{K}(G(\textbf{x}^*_{\textbf{y}})) &\Longleftrightarrow G({\bf x}_{\bf y}^*)\in N_{{\cal K}^{\circ}} (M^*)\\
&\Longleftrightarrow G(\textbf{x}^*_{\textbf{y}})\in \mathcal{K},~M^*\in \mathcal{K}^\circ, \ {M^*}^TG(\textbf{x}^*_{\textbf{y}}) =0.\\
\end{aligned}
\end{equation}

The RCQ in the following definition is a classic constraint qualification used in conic programming; see e.g.,  
\cite{ZhangLW1}. 
\begin{definition}
We say that the RCQ holds at 
$\textbf{x}^*_{\textbf{y}}\in\Phi$ 
if the gradients $\nabla (G^1(\textbf{x}^*_{\textbf{y}}))_{j_1}$ and $\nabla (G^{1,\xi_i}(\textbf{x}^*_{\textbf{y}}))_{j_2},~i\in[K],~j_1\in[n_1],~j_2\in[n_2]$
are linearly independent  and there exists a vector $d\in\mathbb{R}^{m_1+m_2K}$ such that
\begin{equation}\label{RCQ}
\begin{aligned}
	& \mathcal{J} G^1(\textbf{x}^*_{\textbf{y}})d=0,\\
    &
	\mathcal{J} G^{1,\xi_i}(\textbf{x}^*_{\textbf{y}}) d=0,~i\in[K],\\
&G^2(\textbf{x}^*_{\textbf{y}})+\mathcal{J}G^2(\textbf{x}^*_{\textbf{y}})d\in{\rm int}(-C_1),\\ 
&	G^{2,{\xi_i}}(\textbf{x}^*_{\textbf{y}})+\mathcal{J}G^{2,{\xi_i}}(\textbf{x}^*_{\textbf{y}})d\in {\rm int}(-C_{2,{\xi_i}}),~i\in[K].
\end{aligned}
\end{equation}
\end{definition}	

Assumption A5 means that there is a Slater point for the constraints $G({\bf x}_{\bf y})\in {\cal K}$, which   indeed  guarantees that  the RCQ holds at any feasible point of ${\bf{x}_y} \in \Phi$, according to \cite[Proposition 2.104, pp. 73]{Pertur}.

\begin{lemma}\label{normalcone}
	{\rm
		At any point ${\bf{x_y^*}}\in \Phi$, we have 
		\begin{eqnarray*}
			& &T_{\Phi}(\textbf{x}^*_{\textbf{y}})=\bigg\{d\in\mathbb{R}^{m_1+m_2K}:\mathcal{J}G(\textbf{x}^*_{\textbf{y}})d\in T_\mathcal{K}(G(\textbf{x}^*_{\textbf{y}}))\bigg\}, 
            \\
			& &N_\Phi(\textbf{x}^*_{\textbf{y}})=\hat{N}_\Phi(\textbf{x}^*_{\textbf{y}})=T_{\Phi}^{\circ}(\textbf{x}^*_{\textbf{y}})= \nabla G(\textbf{x}^*_{\textbf{y}})N_\mathcal{K}(G(\textbf{x}^*_{\textbf{y}})).
	\end{eqnarray*}}
\end{lemma}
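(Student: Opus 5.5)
The plan is to use the fact that $\Phi = G^{-1}(\mathcal{K})$ is a \emph{convex} set. Indeed, $G$ is affine and $\mathcal{K}$ is a closed convex cone, being a product of $\{0\}$'s and the closed convex cones $-C_1$, $-C_{2,\xi_i}$. Two consequences follow at once. First, $\hat{N}_\Phi = N_\Phi$ coincides with the normal cone of convex analysis \cite[Theorem 6.9]{Variational}. Second, $T_\Phi(\mathbf{x}^*_{\mathbf{y}})$ is a closed convex cone whose polar is this normal cone \cite[Theorem 6.9, Corollary 6.30]{Variational}. This gives the middle equalities $N_\Phi = \hat{N}_\Phi = T_\Phi^\circ$. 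The substantive claims are therefore the tangent cone formula and the chain rule $N_\Phi = \nabla G\, N_\mathcal{K}(G)$. I will derive both from the RCQ, which by Assumption A5 and \cite[Proposition 2.104]{Pertur} holds at every point of $\Phi$.

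For the tangent cone, I will invoke the standard result for constraint systems $G(\mathbf{x}_\mathbf{y})\in\mathcal{K}$ with $G$ continuously differentiable and $\mathcal{K}$ closed convex. Under Robinson's constraint qualification, \cite[Corollary 2.91]{Pertur} gives
$$T_\Phi(\mathbf{x}^*_{\mathbf{y}})=\{d:\mathcal{J}G(\mathbf{x}^*_{\mathbf{y}})d\in T_\mathcal{K}(G(\mathbf{x}^*_{\mathbf{y}}))\}.$$
Since $G$ is affine, the inclusion ``$\subseteq$'' is elementary. If $x^k = \mathbf{x}^*_\mathbf{y} + t_k d_k \in \Phi$ with $d_k\to d$, then $G(\mathbf{x}^*_\mathbf{y}) + t_k\mathcal{J}G\, d_k\in\mathcal{K}$, so $\mathcal{J}G\,d\in T_\mathcal{K}(G(\mathbf{x}^*_\mathbf{y}))$. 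The reverse inclusion is where the RCQ enters. I would check that the version in (\ref{RCQ}) is equivalent to Robinson's condition
$$0\in\operatorname{int}\{G(\mathbf{x}^*_\mathbf{y})+\mathcal{J}G\,\mathbb{R}^{m_1+m_2K}-\mathcal{K}\}.$$
This equivalence is the standard splitting for a product cone whose equality part has surjective Jacobian, which is guaranteed by the linear independence of the gradients. Alternatively, for affine $G$ one can argue directly. Given $d$ with $\mathcal{J}G\,d\in T_\mathcal{K}$, combine $d$ with the Slater direction $\mathring{\mathbf{x}}_\mathbf{y}-\mathbf{x}^*_\mathbf{y}$, using $d+\varepsilon(\mathring{\mathbf{x}}_\mathbf{y}-\mathbf{x}^*_\mathbf{y})$. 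This yields directions that are feasible, because for convex $\mathcal{K}$ the cone $T_\mathcal{K}$ is the closure of the cone of feasible directions. Letting $\varepsilon\downarrow0$ and using closedness of $T_\Phi$ finishes the argument.

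For the normal cone, I will polarize the tangent formula:
$$N_\Phi = \{d:\mathcal{J}G\,d\in T_\mathcal{K}\}^\circ.$$
By the standard polar-of-preimage rule, this polar equals $\overline{\nabla G\,T_\mathcal{K}^\circ}=\overline{\nabla G\,N_\mathcal{K}(G(\mathbf{x}^*_\mathbf{y}))}$. The main obstacle is removing the closure, since the image of a closed cone under a linear map need not be closed. The RCQ/Slater condition handles this. Take $v_k=\nabla G\,M_k\to v$ with $M_k\in N_\mathcal{K}(G(\mathbf{x}^*_\mathbf{y}))$. I will show $\{M_k\}$ is bounded, arguing by contradiction. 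If $\|M_k\|\to\infty$, normalize to get a limit $\bar M\neq0$ with $\bar M\in N_\mathcal{K}$ and $\nabla G\,\bar M=0$. Pairing with the vector $d$ from (\ref{RCQ}) gives
$$0=\bar M^T\mathcal{J}G\,d=\bar M^T\bigl(G(\mathbf{x}^*_\mathbf{y})+\mathcal{J}G\,d\bigr),$$
using complementarity (\ref{complement}). By (\ref{KK}), the conic components of $\bar M$ lie in $N_{-C_1}$ and $N_{-C_{2,\xi_i}}$. Since $G^2+\mathcal{J}G^2 d$ and $G^{2,\xi_i}+\mathcal{J}G^{2,\xi_i}d$ are interior points of these cones, the conic components must vanish. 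What remains is $\nabla G^1\alpha_1+\sum_i\nabla G^{1,\xi_i}\pi_{1,i}=0$. Linear independence of the equality gradients then forces these components to vanish as well, so $\bar M=0$, a contradiction. Hence $\{M_k\}$ is bounded, a subsequence converges to some $M\in N_\mathcal{K}$ (this cone is closed), and $v=\nabla G\,M$, which gives closedness and completes the proof.
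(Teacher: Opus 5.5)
Your proposal is correct and follows the same outline as the paper. Convexity of $\Phi$ gives $N_\Phi=\hat N_\Phi=T_\Phi^{\circ}$, and the tangent-cone formula comes from \cite[Corollary 2.91]{Pertur} under the RCQ.

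The real difference is the last equality $T_\Phi^{\circ}(\textbf{x}^*_{\textbf{y}})=\nabla G(\textbf{x}^*_{\textbf{y}})N_\mathcal{K}(G(\textbf{x}^*_{\textbf{y}}))$. The paper simply cites it from \cite[(4), pp.~107]{RCQnormalcone}, whereas you prove it. Polarizing the tangent formula gives the closure of $\nabla G\,N_\mathcal{K}$. You then remove the closure with a normalized-multiplier argument: an unbounded $M_k$ would give $0\neq\bar M\in N_\mathcal{K}$ with $\nabla G\,\bar M=0$, which the interior and linear-independence parts of (\ref{RCQ}) exclude. This makes the lemma self-contained and shows exactly where the constraint qualification is needed, namely closedness of a linear image of a closed cone. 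It is also the same argument the paper runs later in case (i) of the proof of Theorem~\ref{convergence}.

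Your alternative Slater-direction proof of the reverse tangent inclusion is also valid, but tighten its justification. Adding $\varepsilon(\mathring{\textbf{x}}_{\textbf{y}}-\textbf{x}^*_{\textbf{y}})$ keeps the equality components of $\mathcal{J}G\,d$ at zero. It also moves the conic components into the interiors of $T_{-C_1}(G^2(\textbf{x}^*_{\textbf{y}}))$ and $T_{-C_{2,\xi_i}}(G^{2,\xi_i}(\textbf{x}^*_{\textbf{y}}))$, and interior points of a convex set's tangent cone are genuine feasible directions by \cite[Theorem 6.9]{Variational}. Knowing only that $T_\mathcal{K}$ is the closure of the feasible directions would not give feasibility.

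That Slater route uses only the Slater point. Your closedness step, like the paper's RCQ, also needs linear independence of the equality gradients. Both you and the paper read this into A5, but a Slater point alone does not guarantee it.
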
	
\begin{proof}
	The formula for the tangent cone
    comes from {\rm\cite[Corollary 2.91,  pp. 66]{Pertur}} using the fact that the  RCQ holds at any ${\bf{x_y^*}}$ for the system $G({\bf x}_{\bf y})\in {\cal K}$. 
	Furthermore, the first equality 
    for the normal cone is  obtained from \cite[Theorem 6.9, pp. 203]{Variational} since $\Phi$ is convex, the second equality holds by \cite[Theorem 6.28, pp. 220]{Variational}, and the last equality holds according to \cite[(4), pp. 107]{RCQnormalcone}. 
\end{proof}

\begin{definition}[KKT Point]		\label{stationarity} 
	We say that $\textbf{x}^*_{\textbf{y}}$
	 is a  KKT point of problem (\ref{new_prob}) if $\textbf{x}^*_{\textbf{y}}\in \Phi$
	  and there exist vectors $\lambda^*\in \partial P({\cal U}({\bf x}_{\bf y}^*))$
	   and $M^* \in N_{\cal K} (G({\bf x}_{\bf y}^*))$ such that 
	\begin{equation}\label{SPoint}
		0=\nabla \hat{f}(\textbf{x}^*_{\textbf{y}})
		+ \nabla{\cal U}(\textbf{x}_{\textbf{y}}^*) 
		\lambda^* 	+\nabla G(\textbf{x}^*_{\textbf{y}}) M^*.
	\end{equation}	 
\end{definition}

Let ${\bf x}_{\bf y}^* \in \Phi$. To show that a KKT point of (\ref{new_prob}) is a necessary optimality condition for (\ref{new_prob}), we need the following two assumptions. 
\begin{description}
    \item[\bf{A6.}] 
$	-\partial^{\infty} \Psi({\bf x}_{\bf y}^*) \cap N_{\Phi}({\bf x}_{\bf y}^*) = \{0\}.
	$
	
	\item[{\bf A7.}]
$	\nabla {\cal U}({\bf{x}}_{\bf{y}}^*) \lambda =0,\lambda \in \partial^{\infty} P({\cal U}({\bf{x}}_{\bf{y}}^* ))\ \Longrightarrow\   \lambda =0.$
\end{description}

	The condition in assumption \textbf{A6} is called the basic  qualification (BQ), and is widely adopted in nonconvex nonsmooth  optimization; see \cite[Definition 2.3]{RCPLD1}. The condition in assumption {\bf A7}
	is also a classical constraint qualification for nonconvex nonsmooth optimization problems; see \cite[(12)]{Liu}.  If $P_1$ and $P_{\xi_i}$ for all  $i\in [K]$ are all locally Lipschitz continuous, then both $P$ and $\Psi$ are also locally Lipschitz continuous. This guarantees  that $\partial^{\infty} \Psi({\bf x}_{\bf y}^*) = \{0\}$ and $\partial^{\infty}P({\cal U}({\bf x}_{\bf y}^*))=\{0\}$. Hence, in this case,  assumptions {\bf A6} and {\bf A7} obviously hold. 
	 If $U_1$ and $U_{\xi_i}$, $i\in [K]$ are of full row rank, then assumption {\bf A7} holds.

\begin{theorem}\label{necessary optimality 1}
Let {\rm $\textbf{x}^*_{\textbf{y}}$} be a local optimal solution of  problem {\rm(\ref{new_prob})}.
Suppose assumptions {\bf A6} and {\bf A7} hold, 
then {\rm $\textbf{x}^*_{\textbf{y}}$} is a KKT point  of problem {\rm(\ref{new_prob})}.
\end{theorem}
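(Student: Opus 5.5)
We rewrite (\ref{new_prob}) as the unconstrained problem of minimizing $F(\mathbf{x}_{\mathbf{y}}) := \hat f(\mathbf{x}_{\mathbf{y}}) + \Psi(\mathbf{x}_{\mathbf{y}}) + \delta_\Phi(\mathbf{x}_{\mathbf{y}})$. Since $\mathbf{x}^*_{\mathbf{y}}$ is a local minimizer of $F$, the generalized Fermat rule \cite[Theorem 10.1]{Variational} gives $0\in\hat\partial F(\mathbf{x}^*_{\mathbf{y}})\subseteq \partial F(\mathbf{x}^*_{\mathbf{y}})$. Because $\hat f$ is $C^2$ by assumption A2, we have $\partial F = \nabla \hat f + \partial(\Psi+\delta_\Phi)$ by \cite[Exercise 8.8(c)]{Variational}.

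Next we split the sum $\Psi+\delta_\Phi$ with the subdifferential sum rule for lsc functions \cite[Corollary 10.9]{Variational}. Its qualification condition is that the only $v_1\in\partial^\infty\Psi(\mathbf{x}^*_{\mathbf{y}})$ and $v_2\in\partial^\infty\delta_\Phi(\mathbf{x}^*_{\mathbf{y}}) = N_\Phi(\mathbf{x}^*_{\mathbf{y}})$ with $v_1+v_2=0$ are zero. This is exactly assumption A6, since $-v_1 = v_2 \in N_\Phi$. Hence
\[
0\in \nabla\hat f(\mathbf{x}^*_{\mathbf{y}}) + \partial\Psi(\mathbf{x}^*_{\mathbf{y}}) + N_\Phi(\mathbf{x}^*_{\mathbf{y}}).
\]
By Lemma \ref{normalcone}, valid under A5, $N_\Phi(\mathbf{x}^*_{\mathbf{y}}) = \nabla G(\mathbf{x}^*_{\mathbf{y}})N_{\mathcal K}(G(\mathbf{x}^*_{\mathbf{y}}))$. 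This produces the multiplier $M^*\in N_{\mathcal K}(G(\mathbf{x}^*_{\mathbf{y}}))$.

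It remains to show $\partial\Psi(\mathbf{x}^*_{\mathbf{y}})\subseteq \nabla\mathcal U(\mathbf{x}^*_{\mathbf{y}})\,\partial P(\mathcal U(\mathbf{x}^*_{\mathbf{y}}))$, where $P$ is viewed as a function of the stacked variable $w=(w_0;w_i,i\in[K])$. Since $\Psi = P\circ\mathcal U$ and $\mathcal U$ is affine, the chain rule \cite[Theorem 10.6]{Variational} (equivalently Exercise 10.7 for affine maps) gives this inclusion. It needs the qualification $\nabla\mathcal U(\mathbf{x}^*_{\mathbf{y}})\lambda=0$, $\lambda\in\partial^\infty P(\mathcal U(\mathbf{x}^*_{\mathbf{y}}))\Rightarrow \lambda=0$, which is precisely assumption A7. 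Applying the chain rule requires $P$ to be lsc and proper and finite at $\mathcal U(\mathbf{x}^*_{\mathbf{y}})$. This follows from A1 and A4, since $\Psi(\mathbf{x}^*_{\mathbf{y}})<\infty$ at a minimizer with finite value by Proposition \ref{prop3.1}.

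One detail needs checking: $P$ is a separable sum $P_1(w_0)+\sum_i p_iP_{\xi_i}(w_i)$. Its limiting and horizon subdifferentials are the corresponding products (with $p_i$ scalings) by \cite[Proposition 10.5]{Variational}. We use this only to interpret $\partial P$ and $\partial^\infty P$ consistently with Definition \ref{stationarity}; the chain rule itself does not need it. Combining the three pieces yields $\lambda^*\in\partial P(\mathcal U(\mathbf{x}^*_{\mathbf{y}}))$ and $M^*$ satisfying (\ref{SPoint}), with $\mathbf{x}^*_{\mathbf{y}}\in\Phi$.

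The main obstacle is the justification of the two calculus rules in the non-Lipschitz setting. We expect the sum rule to be the more delicate step, because it requires $\Psi$ to be lsc and finite at the point, with $\partial^\infty$ taken in the $f$-attentive sense. Assumptions A1 and A6 are designed exactly for this, so we will simply verify these hypotheses.
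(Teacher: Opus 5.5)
Your proposal is correct and follows essentially the same route as the paper. Both apply Fermat's rule to $\hat f+\Psi+\delta_\Phi$, then the limiting-subdifferential sum rule qualified by \textbf{A6} (with $\partial^\infty\delta_\Phi=N_\Phi$), the chain rule for $\Psi=P\circ\mathcal U$ qualified by \textbf{A7}, and Lemma~\ref{normalcone} (RCQ from \textbf{A5}) to write $N_\Phi(\mathbf{x}^*_{\mathbf{y}})=\nabla G(\mathbf{x}^*_{\mathbf{y}})N_{\mathcal K}(G(\mathbf{x}^*_{\mathbf{y}}))$. One small correction: finiteness of $P$ at $\mathcal U(\mathbf{x}^*_{\mathbf{y}})$ does not come from Proposition~\ref{prop3.1} or \textbf{A4}, which concern global optimal values and recourse functions; it holds simply because $r_1$ and $r_{\xi_i}$ are real-valued in the problem setup.
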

\begin{proof} 
It is easy to see that (\ref{new_prob}) is equivalent to	
\begin{equation*}
\min \hat{f}(	\textbf{x}_{\textbf{y}})+\Psi(	\textbf{x}_{\textbf{y}})+\delta_{\Phi}(	\textbf{x}_{\textbf{y}}). 
\end{equation*}
It follows from Fermat’s rule \cite[Theorem 10.1, pp. 422]{Variational}
that	 
\begin{equation}\label{2-1}
0\in\nabla \hat{f}(\textbf{x}^*_{\textbf{y}})+\partial\left(\Psi(\textbf{x}^*_{\textbf{y}})+\delta_{\Phi}(\textbf{x}^*_{\textbf{y}})\right).
\end{equation}	
Using 
assumption {\bf A6}, $\partial \delta_{\Phi}({\bf x}_{\bf y}^*) = \partial^{\infty} \delta({\bf x}_{\bf y}^*) = N_{\Phi}({\bf x}_{\bf y}^*)$ and the sum rule of the limiting subdifferential \cite[Corollary 10.9, pp. 430]{Variational}, we can get
\begin{equation}\label{2-2}
\begin{aligned}
	\partial\left(\Psi(\textbf{x}^*_{\textbf{y}})+\delta_{\Phi}(\textbf{x}^*_{\textbf{y}})\right)
	\subseteq  	\partial\Psi(\textbf{x}^*_{\textbf{y}})+\partial\delta_{\Phi}(\textbf{x}^*_{\textbf{y}})
	= \partial\Psi(\textbf{x}^*_{\textbf{y}})+N_{\Phi}(\textbf{x}^*_{\textbf{y}}),
\end{aligned}
\end{equation}	 
according to \cite[Exercise 8.14, pp. 310]{Variational}.
Furthermore, by 
assumption {\bf A7} and \cite[Theorem 10.6, pp. 427]{Variational}, we know that 
\begin{eqnarray}\label{Partial_composite}
\partial \Psi({\bf{x}}_{\bf y}^*) \subseteq  \nabla {\cal U}({\bf{x}}_{\bf y}^*)\partial P({\cal U}({\bf x}_{\bf y}^*)).
\end{eqnarray}
Since the RCQ holds
at $\textbf{x}^*_{\textbf{y}}$ for the system $G({\bf x}_{\bf y}) \in {\cal K}$, it follows from Lemma \ref{normalcone} that
\begin{equation}\label{2-3}
N_\Phi(\textbf{x}^*_{\textbf{y}})=\nabla G(\textbf{x}^*_{\textbf{y}})N_\mathcal{K}(G(\textbf{x}^*_{\textbf{y}})).
\end{equation}
Combining (\ref{2-1})-(\ref{2-3}), we find that $\textbf{x}^*_{\textbf{y}} \in \Phi$ satisfies (\ref{SPoint}) and consequently 
$$0\in \nabla \hat f({\bf x}^*) + \nabla {\cal U}({\bf x}_{\bf y}^*) 
\partial P({\cal U}({\bf x}_{\bf y}^*)) + \nabla G({\bf x}_{\bf y}^*) 
N_{\cal K}(G({\bf x}_{\bf y}^*)).$$ 
This shows that there exist $\lambda^* \in \partial P(\mathcal{U}(\mathbf{x}_{\mathbf{y}}^*))$ and $M^* \in N_{\mathcal{K}}(G(\mathbf{x}_{\mathbf{y}}^*))$ satisfying \eqref{SPoint}. Hence $\mathbf{x}_{\mathbf{y}}^*$ is a KKT point.
\end{proof}

For special non-Lipschitz functions $\Psi({\bf x}_{\bf y})$, a local optimal solution ${\bf x}_{\bf y}^* $  of problem (\ref{new_prob}) is a stationary point of (\ref{new_prob}), without assumptions {\bf A6} and {\bf A7}. 
Similarly to \cite[Theorem 2.2]{RCPLD1}, we can show the following corollary without difficulty.
\begin{corollary}\label{separable}
Suppose
$\Psi({\bf x}_{\bf y}) = \sum_{i=1}^{m_1+m_2 K} \psi_i(({\bf x}_{\bf y})_i)$ for some lsc functions $\psi_i : \mathbb{R} \to \mathbb{R}$. Let ${\cal I} = \{i\ : \partial^{\infty} \psi_i(({\bf x}_{\bf y}^*)_i)=\{0\}\}$ and ${\cal I}^c$ be the complement of ${\cal I}$ with respect to $\{1,\ldots,m_1 + m_2 K\}$.
Assume further that for any $i\in {\cal I}^c$, $\partial \psi_i(({\bf x}_{\bf y}^*)_i) = \mathbb{R}$, and the RCQ holds at $({\bf x}_{\bf y}^*)_{\cal I}$ for the system $G(({\bf x}_{\bf y})_{\cal I}, ({\bf x}_{\bf y}^*)_{{\cal I}^c})\in {\cal K}$.
Then ${\bf x}_{\bf y}^*$ is a KKT point of problem {\rm (\ref{new_prob})}.  
\end{corollary}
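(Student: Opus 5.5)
The idea is to freeze the non-Lipschitz coordinates and apply Theorem \ref{necessary optimality 1} (or rather its proof) to a reduced problem in the Lipschitz coordinates only; this mirrors \cite[Theorem 2.2]{RCPLD1}.

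\emph{Reduced problem.} Write ${\bf w}=({\bf x}_{\bf y})_{\cal I}$ and ${\bf v}=({\bf x}_{\bf y})_{{\cal I}^c}$. Since ${\bf x}_{\bf y}^*$ is a local minimizer of (\ref{new_prob}), ${\bf w}^*=({\bf x}_{\bf y}^*)_{\cal I}$ is a local minimizer of
\begin{equation*}
\min_{\bf w}\ \hat f({\bf w},{\bf v}^*)+\sum_{i\in{\cal I}}\psi_i(w_i)\quad {\rm s.t.}\quad G({\bf w},{\bf v}^*)\in{\cal K}.
\end{equation*}
The constant term $\sum_{i\in{\cal I}^c}\psi_i(v_i^*)$ has been dropped. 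Let $\Phi_{\cal I}$ be the feasible set of this problem. It is convex, and by hypothesis the RCQ holds at ${\bf w}^*$. Hence Lemma \ref{normalcone} applies verbatim to the affine map ${\bf w}\mapsto G({\bf w},{\bf v}^*)$, giving
\begin{equation*}
N_{\Phi_{\cal I}}({\bf w}^*)=\nabla_{\bf w}G({\bf x}_{\bf y}^*)N_{\cal K}(G({\bf x}_{\bf y}^*)).
\end{equation*}

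\emph{Optimality condition for the reduced problem.} For $i\in{\cal I}$ we have $\partial^\infty\psi_i(x_i^*)=\{0\}$, so each such $\psi_i$ is Lipschitz near $x_i^*$ by \cite[Theorem 9.13]{Variational}. Therefore $\sum_{i\in{\cal I}}\psi_i$ is locally Lipschitz, its horizon subdifferential is $\{0\}$, and the analogue of A6 holds automatically. Following the proof of Theorem \ref{necessary optimality 1}, apply Fermat's rule and then the sum rule \cite[Corollary 10.9]{Variational}. The separable subdifferential rule \cite[Proposition 10.5]{Variational} gives $\partial\sum_{i\in{\cal I}}\psi_i({\bf w}^*)=\prod_{i\in{\cal I}}\partial\psi_i(x_i^*)$. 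This produces $\lambda_i^*\in\partial\psi_i(x_i^*)$ for $i\in{\cal I}$ and $M^*\in N_{\cal K}(G({\bf x}_{\bf y}^*))$ such that
\begin{equation*}
0=\nabla_{\bf w}\hat f({\bf x}_{\bf y}^*)+(\lambda_i^*)_{i\in{\cal I}}+\nabla_{\bf w}G({\bf x}_{\bf y}^*)M^*.
\end{equation*}

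\emph{Filling in the frozen coordinates.} For $i\in{\cal I}^c$ the residual
\begin{equation*}
-\big(\nabla_{v_i}\hat f({\bf x}_{\bf y}^*)+(\nabla_{v_i}G({\bf x}_{\bf y}^*)M^*)\big)
\end{equation*}
is some real number, and we take it as $\lambda_i^*$. This is admissible because $\partial\psi_i(x_i^*)=\mathbb{R}$ by assumption. Collecting $\lambda^*=(\lambda_i^*)_i$, Proposition 10.5 again shows $\lambda^*\in\partial\Psi({\bf x}_{\bf y}^*)$, and by construction
\begin{equation*}
0=\nabla\hat f({\bf x}_{\bf y}^*)+\lambda^*+\nabla G({\bf x}_{\bf y}^*)M^*.
\end{equation*}

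\emph{Main obstacle.} The KKT definition asks for $\lambda^*\in\partial P({\cal U}({\bf x}_{\bf y}^*))$ together with the term $\nabla{\cal U}\lambda^*$, not for $\lambda^*\in\partial\Psi$. I will handle this by reading the corollary in the setting where $\Psi$ is separable in the original variables, i.e.\ ${\cal U}$ is the identity up to a translation and $P_1$, $P_{\xi_i}$ are coordinatewise sums, with the scenario weights $p_i$ absorbed into the $\psi_i$. In that reading $\partial P({\cal U}({\bf x}_{\bf y}^*))$ coincides with $\partial\Psi({\bf x}_{\bf y}^*)$ and $\nabla{\cal U}$ is the identity, so the display above is exactly (\ref{SPoint}). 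A secondary point to check is that the RCQ for the reduced system is what is needed to invoke Lemma \ref{normalcone}, since A5 no longer applies directly after freezing ${\bf v}^*$.
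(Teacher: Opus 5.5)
Your proposal is correct and follows essentially the route the paper intends. The paper only says the corollary follows as in \cite[Theorem 2.2]{RCPLD1}, and your argument is that argument: freeze the ${\cal I}^c$ coordinates, get the multiplier rule on the ${\cal I}$-block from the RCQ of the reduced system and local Lipschitzness there, then fill the ${\cal I}^c$ entries using $\partial\psi_i(({\bf x}_{\bf y}^*)_i)=\mathbb{R}$ and \cite[Proposition 10.5]{Variational}. This is the same pattern the paper uses in the proof of Theorem \ref{l0convergence}, including your identification of ${\cal U}$ with the identity so that $\partial P({\cal U}({\bf x}_{\bf y}^*))=\partial\Psi({\bf x}_{\bf y}^*)$, and your tacit assumption that ${\bf x}_{\bf y}^*$ is a local minimizer matches the sentence preceding the corollary.
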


Clearly, the sparsity-induced penalties $\Psi({\bf x}_{\bf y}): = \|x\|_p^p + \sum_{i=1}^K p_i \| y_i\|_p^p$ for $p\in (0,1)$ and $\Psi({\bf x}_{\bf y}) := \|{x}\|_0 + \sum_{i=1}^K p_i\|y_i\|_0$ satisfy the requirements of $\Psi$ in Corollary \ref{separable}, 
which are non-Lipschitz and the latter one is even discontinuous.

\begin{remark} When the conic constraints 
can be defined by a finite number of equality and inequality constraints,
then the RCQ is equivalent to the Mangasarian-Fromovitz
constraint qualification (MFCQ) according to \cite[pp. 71]{Pertur}.	
Using \cite[Proposition 2.2]{RCPLD1}, the relaxed constant positive linear dependence (RCPLD), which is weaker than the RCQ, is sufficient to derive the  representation of $N_\Phi(\textbf{x}^*_{\textbf{y}})$ 
in Lemma \ref{normalcone}. Consequently, we can derive the same conclusions as in  Theorem \ref{necessary optimality 1} and Corollary \ref{separable} using the RCPLD.
\end{remark}

We further reformulate  the KKT condition in (\ref{SPoint}) to an equivalent non-monotone nonsmooth  two-stage SVI problem, which provides the clue for developing the SDC method with PHM via the SVI approach in the next section.

Letting
\begin{equation*}
	\begin{array}{c}
		H(	\textbf{z};\lambda^*)
	\end{array}:=
	\left
	(
	\begin{array}{c}
		\nabla \hat{f}(\textbf{x}_{\textbf{y}})
			 +	\nabla G({\bf{x}_{\bf y}}) M 
			 + \nabla {\cal U}({\bf x}_{\bf y}) \lambda^* \\
		-G(\textbf{x}_{\textbf{y}})
	\end{array}
	\right
	),~
\end{equation*}
the condition  (\ref{SPoint}) is equivalent to that there exist $\textbf{z}^* = ({\bf x}_{\bf y}^*;M^*)\in D$ and 
$\lambda^* \in \partial P({\cal U}({\bf x}_{\bf y}^*))$ 
such that 
$\textbf{z}^*
$ is a solution of  the following two-stage SVI
\begin{equation}\label{OKKT}
0\in
H({\bf z};\lambda^*)+N_{D}(	\textbf{z}).
\end{equation}
Let us define the natural residual
function by 
\begin{equation}
\begin{aligned}
	&H_{D}^{\rm nat}(\textbf{z};\lambda^*):=\textbf{z}-{\bf Proj}_{D}\left(\textbf{z}-	H(	\textbf{z};\lambda^*)\right),
\end{aligned}	
\end{equation}
where ${\bf Proj}_{D}(\hat{\textbf{z}})$ is the projection of $\hat{\textbf{z}}$ on $D$. Let us denote the set of solutions to (\ref{OKKT}) by SOL$(D,H)$.
By \cite[Proposition 1.5.8, pp. 83]{Finite-Dimensional}, $\textbf{z}^*\in$ SOL$(D,H)$  if and only if
\begin{equation*}
H_{D}^{\rm nat}(\textbf{z}^*;\lambda^*)=0.
\end{equation*}

\section{Solution method for (\ref{new_prob})} \label{section-alg}

\subsection{Successive difference-of-convex method}

We first approximate the nonconvex nonsmooth terms through the Moreau envelope, 
and consider a 
	Moreau envelope  induced approximation problem (MEAP) of  (\ref{new_prob}) defined by
\begin{equation*}\label{DCstage1}
	\begin{aligned}
		\min_{	\textbf{x}_{\textbf{y}}} &~F_{\rho}(	\textbf{x}_{\textbf{y}})
		:={\hat f}({\bf x}_{\bf y}) + r_{1,\rho}(x) + \sum_{i=1}^K p_i r_{\xi_i,\rho}(y_i)
		\\
		\mathrm{s.t.}&~ G(	\textbf{x}_{\textbf{y}})\in \mathcal{K}.
	\end{aligned}\tag{MEAP}
\end{equation*}	
Here,  $r_{1,\rho}$ and $r_{\xi_i,\rho}$, $i\in [K]
$ are the Moreau envelopes of $r_1$ and $r_{\xi_i}$, $i\in [K]$, respectively. They have 
the DC decomposition:
\begin{eqnarray*}
	r_{1,\rho}(x) &=& \frac{1}{2 \rho}\|U_1x +u_1\|^2-R_{1,\rho}(x),\\
	r_{\xi_i,\rho}(y_i)
	&=& \frac{1}{2\rho} \|U_{\xi_i}y_i+u_{\xi_i}\|^2 - R_{\xi_i,\rho}(y_i),\ i\in [K], 
\end{eqnarray*}
where 
$$R_{1,\rho}(x)=D_{\rho,P_1}(U_1 x +u_1) \quad\mbox{and}\quad 
R_{\xi_i,\rho}(y_i)= D_{\rho,P_{\xi_i}}(U_{\xi_i}y_i + u_{\xi_i}).$$ 
The Moreau envelopes $r_{1,\rho}$, $r_{\xi_i,\rho}$, $i\in [K]$ of $r_1$ and $r_{\xi_i}$, $i\in [K]$,  along with their DC decompositions for handling nonconvex nonsmooth 
functions are inspired by  \cite{Liu}. 

We further design a linearized approximation with a regularization term for (\ref{DCstage1}). Given  $\rho_t$,  ${\bf x}_{\bf y}^t = (x^t;y_i^t;
	i\in [K])$, and   
$$\zeta^t_1 \in {\rm prox}_{\rho_t P_1}(U_1 x^t + u_1),\quad \zeta_{\xi_i}^t \in {\rm prox}_{\rho_t P_{\xi_i}} (U_{\xi_i}y_i^t + u_{\xi_i}),\ i\in [K],$$  we know by (\ref{8}) that $\varpi^t := \left(\varpi_1^t;\varpi_{\xi_i}^t; \forall i\in [K]\right)$ satisfies
$$\varpi^t_1 := \frac{1}{\rho_t} U_1^T \zeta_1^t  \in\partial R_{1,\rho_t}(x^t), \quad \varpi_{\xi_i}^t := \frac{1}{\rho_t} U_{\xi_i}^T \zeta_{\xi_i}^t \in \partial R_{\xi_i,\rho_t}(y_i^t),\ i\in [K].$$ 
 For given $\tau:=(\tau_1;\tau_2)>0$, we define the convex smooth  surrogates of $r_{1,\rho_t}(x)$ and $r_{\xi_i,\rho_t}(y_i)$, $i\in [K]$  as 
	\begin{eqnarray*}
		{\hat r}_{1,\rho_t,\tau_1}(x) &:=&\frac{1}{2\rho_t}\|U_1 x+u_1\|^2 -R_{1,\rho_t}(x^t) - \langle \varpi_1^t,x-x^t\rangle + \frac{\tau_1}{2}\|x-x^t\|^2,\\
		{\hat r}_{\xi_i,\rho_t,\tau_2}(y_i) &:=& \frac{1}{2\rho_t}\|U_{\xi_i} y_i + u_{\xi_i}\|^2 - R_{\xi_i,\rho_t}(y_i^t) - \langle \varpi_{\xi_i}^t, y_i - y^t_i\rangle + \frac{\tau_2}{2}\|y_i- y^t_i\|^2, 
	\end{eqnarray*}
	using the linearization of the second convex term in the DC decomposition and a quadratic regularization term at ${\bf x}_{\bf y}^t$, respectively. 
	We then define
	\begin{eqnarray*}
		F^L_{\rho_t,{\bf x}_{\bf y}^t,\varpi^t,\tau}(	\textbf{x}_{\textbf{y}}):=& & \hat f({\bf x}_{\bf y})+{\hat r}_{1,\rho_t,\tau_1}(x) + \sum_{i=1}^K p_i \hat r_{\xi_i,\rho_t,\tau_2}(y_i),
\end{eqnarray*}
and find  that
	\begin{eqnarray}\label{ineq1}
		F_{\rho_t}({\bf{x}}_{\bf{y}}) &\le& F_{\rho_t,{\bf x}_{\bf y}^{t},\varpi^{t},\tau}^L({\bf{x}}_{\bf{y}}) - \frac{\tau_1}{2}\|x-x^{t}\|^2 - \frac{\tau_2}{2}\sum_{i=1}^K p_i\|y_i - y_i^t\|^2,\\
    \label{ineq2}
		F_{\rho_t}(\textbf{x}_{\bf y}^{t}) &=& F_{\rho_t, {\bf x}_{\bf y}^{t},\varpi^{t},\tau}^L(\textbf{x}_{\bf y}^{t}). 
\end{eqnarray}
The linearized approximation problem  of  (\ref{DCstage1}) at ${\bf x}_{\bf y}^t$  can be written as 
\begin{equation*}\label{step2}
	\begin{aligned}
		\min_{	\textbf{x}_{\textbf{y}}} &~F^L_{\rho_t,{\bf x}_{\bf y}^t,\varpi^t,\tau}(	\textbf{x}_{\textbf{y}})\\
		\mathrm{s.t.}&~ G(	\textbf{x}_{\textbf{y}})\in \mathcal{K}.
	\end{aligned}\tag{L-MEAP-$\rho_t$-${\bf x}_{\bf y}^t$-$\varpi^t$-$\tau$}
\end{equation*}

The above linearized approximation problem is equivalent to a maximal monotone SVI so that PHM can be applied to solve it, as illustrated below. 
It is clear that this problem is convex.
By replacing $r_1(x)$ by ${\hat r}_{1,\rho_t,\tau_1}(x)$ and $r_{\xi_i}(y_i)$ by ${\hat r}_{\xi_i,\rho_t,\tau_2}(y_i)$, $i\in [K]$ in $\Psi$ of T-NNS-SCP (\ref{stage1})-(\ref{stage2}), and using a similar reformulation process as in Proposition \ref{prop3.1}, we can obtain that the problem (\ref{step2})
	is sure to have a solution and  a finite objective value. Furthermore, the  
    linearized approximation problem has a solution and the same finite objective value as   T-NNS-SCP with ${\hat r}_{1,\rho_t,\tau_1}(x)$ and ${\hat r}_{\xi_i,\rho_t,\tau_2}(y_i)$, $i\in [K]$.
The following theorem gives the relationship between  the solutions of the linearized approximation problem     and  a maximal monotone smooth two-stage SVI.
				
Let us define
	\begin{eqnarray}
    \label{lambdaL}
                      & & \lambda_{\rho_t,{\bf x}_{\bf y}^t,\varpi^t,\tau}^L({\bf x}_{\bf y}):= \left(
                      \begin{array}{c}
							\frac{1}{\rho_t} (U_1 x +u_1) - \frac{1}{\rho_t} \zeta_1^t +
							\tau_1 (x-x^t)\\
							p_i\left[\frac{1}{\rho_t} (U_{\xi_i} y_i + u_{\xi_i})- \frac{1}{\rho_t} \zeta_{\xi_i}^t\right] + p_i \tau_2 (y_i- y_i^t); \forall i\in [K]
						\end{array}
						\right),\\
						\label{HL}
					& &	H^{L}_{\rho_t,{\bf x}_{\bf y}^t,\varpi^t,\tau}(	\textbf{z}): =\left(			
						\begin{array}{c}
						\nabla \hat{f}(\textbf{x}_{\textbf{y}})	+\nabla G({\bf x}_{\bf y})M   + \nabla {\cal U}({\bf x}_{\bf y}) \lambda^L_{\rho_t,{\bf x}_{\bf y}^t,\varpi^t,\tau}({\bf x}_{\bf y})
                            \\
							-G({\bf x}_{\bf y})
						\end{array}		
						\right).\quad\quad
					\end{eqnarray}
	\begin{theorem}
	\label{lemma1}The problem (\ref{step2})  is sure to have a solution.  Moreover, 	the point {\rm  $\bar{\textbf{x}}_{\textbf{y}}$} is an  optimal solution of  problem (\ref{step2}) {\rm if and only if 
	$\bar{\textbf{x}}_{\textbf{y}}$} is a solution of  the following  SVI	{\rm
		\begin{equation}\label{svi1}
		0\in\nabla F^L_{\rho_t,{\bf x}_{\bf y}^t,\varpi^t,\tau}( 	\textbf{x}_{\textbf{y}})+N_{\Phi}( 	\textbf{x}_{\textbf{y}}).
	\end{equation}}		
	Furthermore, 
	{\rm  $\bar{\textbf{x}}_{\textbf{y}}$ }is an optimal solution of the SVI {\rm(\ref{svi1})}
	if and only if	there exist 	 
		{\rm$\bar{\bf{z}}= (\bar{{\bf x}}_{\bf y};\bar{M})\in D$}		such that		{\rm $\bar{\bf{z}}$} is a solution of  the  two-stage SVI	{\rm		
		\begin{equation}\label{SVI22}
		0\in H^{L}_{\rho_t,{\bf x}_{\bf y}^t,\varpi^t,\tau}(	\textbf{z}) +N_{D}(	\textbf{z}).
	\end{equation}}				
	\end{theorem}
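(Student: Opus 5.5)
The proof has three parts: existence, the optimization/SVI equivalence, and the lift to the primal-dual two-stage SVI.

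\emph{Existence.} Under A2, $F^L_{\rho_t,{\bf x}_{\bf y}^t,\varpi^t,\tau}$ is a sum of convex $C^1$ functions:
\begin{itemize}
\item $\hat f$;
\item the convex quadratics $\frac{1}{2\rho_t}\|U_1x+u_1\|^2$ and $\frac{1}{2\rho_t}\|U_{\xi_i}y_i+u_{\xi_i}\|^2$;
\item affine terms coming from the linearization of $R_{1,\rho_t}$, $R_{\xi_i,\rho_t}$, where the constants $R_{\cdot,\rho_t}(\cdot^t)$ are finite by (\ref{rho-rhot});
\item the proximal terms $\frac{\tau_1}{2}\|x-x^t\|^2$ and $\frac{\tau_2}{2}\sum_i p_i\|y_i-y_i^t\|^2$.
\end{itemize}
So the objective is real valued and continuous, indeed strongly convex since $\tau>0$. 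The feasible set $\{{\bf x}_{\bf y}: G({\bf x}_{\bf y})\in\mathcal K\}=\Phi$ is convex (affine preimage of a convex cone) and compact by A3. Weierstrass' theorem then gives a minimizer, which is unique by strong convexity. This is the argument of Proposition \ref{prop3.1} with $r_1,r_{\xi_i}$ replaced by the surrogates.

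\emph{Optimization $\Leftrightarrow$ (\ref{svi1}).} The problem (\ref{step2}) is $\min F^L+\delta_\Phi$, a convex problem with a smooth objective. By the convex Fermat rule and the subdifferential sum rule (valid because $F^L$ is finite and differentiable everywhere), $\bar{\bf x}_{\bf y}$ is optimal iff $0\in\nabla F^L(\bar{\bf x}_{\bf y})+N_\Phi(\bar{\bf x}_{\bf y})$. Here $N_\Phi$ is the convex normal cone, which coincides with the limiting one by Lemma \ref{normalcone}. Sufficiency is the standard convexity argument $F^L({\bf x}_{\bf y})\ge F^L(\bar{\bf x}_{\bf y})+\langle\nabla F^L(\bar{\bf x}_{\bf y}),{\bf x}_{\bf y}-\bar{\bf x}_{\bf y}\rangle\ge F^L(\bar{\bf x}_{\bf y})$ for ${\bf x}_{\bf y}\in\Phi$.

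\emph{(\ref{svi1}) $\Leftrightarrow$ (\ref{SVI22}).} First compute $\nabla F^L$. Differentiating the surrogates block by block, with $\varpi_1^t=\frac1{\rho_t}U_1^T\zeta_1^t$ and $\varpi_{\xi_i}^t=\frac1{\rho_t}U_{\xi_i}^T\zeta_{\xi_i}^t$, gives
\[
\nabla F^L({\bf x}_{\bf y})=\nabla\hat f({\bf x}_{\bf y})+\nabla\mathcal U({\bf x}_{\bf y})\,\lambda^L_{\rho_t,{\bf x}_{\bf y}^t,\varpi^t,\tau}({\bf x}_{\bf y}).
\]
This is the one step needing care. $\nabla\mathcal U$ is block diagonal with blocks $U_1^T,U_{\xi_i}^T$, and the probabilities $p_i$ are placed inside $\lambda^L$ as in (\ref{lambdaL}). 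The regularization terms $\tau_1(x-x^t)$ and $p_i\tau_2(y_i-y_i^t)$ appear in (\ref{lambdaL}) multiplied by $U^T$ blocks. The true gradient contains these terms \emph{without} the $U^T$ factor, so the identity is exact only if the paper's convention absorbs them, for instance when $U^T$ acts as identity on these components, or when $\nabla\mathcal U\lambda^L$ is read as shorthand for the stated gradient. I would state this identification explicitly and otherwise treat $\nabla\mathcal U\lambda^L$ as notation for the gradient of the surrogate part. This bookkeeping is the main obstacle; the rest is standard.

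Next, by Lemma \ref{normalcone} (RCQ from A5), $N_\Phi(\bar{\bf x}_{\bf y})=\nabla G(\bar{\bf x}_{\bf y})N_{\mathcal K}(G(\bar{\bf x}_{\bf y}))$. Hence (\ref{svi1}) holds iff there is $\bar M\in N_{\mathcal K}(G(\bar{\bf x}_{\bf y}))$ with $0=\nabla\hat f+\nabla G\,\bar M+\nabla\mathcal U\,\lambda^L$ at $\bar{\bf x}_{\bf y}$. By (\ref{complement}), $\bar M\in N_{\mathcal K}(G(\bar{\bf x}_{\bf y}))$ is equivalent to $G(\bar{\bf x}_{\bf y})\in N_{\mathcal K^\circ}(\bar M)$, i.e.\ $0\in -G(\bar{\bf x}_{\bf y})+N_{\mathcal K^\circ}(\bar M)$. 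Since $D=\mathbb R^{m_1+m_2K}\times\mathcal K^\circ$, we have $N_D(\bar{\bf z})=\{0\}\times N_{\mathcal K^\circ}(\bar M)$, so the two conditions together are exactly $0\in H^L(\bar{\bf z})+N_D(\bar{\bf z})$ with $\bar{\bf z}\in D$. Each step is an equivalence, which gives both directions.

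Maximal monotonicity is not part of the statement, but it would follow as a remark: $H^L$ is the gradient of the convex $F^L$ coupled with the skew-symmetric affine part in $({\bf x}_{\bf y},M)$, so it is monotone.
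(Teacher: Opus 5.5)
Your proof is correct and follows the same route as the paper. Existence comes from compactness of $\Phi$ (the paper appeals to the analogue of Proposition~\ref{prop3.1}), the first equivalence from the first-order characterization of minimizers of a smooth convex function over a convex set, and the lift to (\ref{SVI22}) from Lemma~\ref{normalcone} under the RCQ implied by A5 together with (\ref{complement}). The bookkeeping problem you flag is a genuine slip in (\ref{lambdaL}): the $\tau$-terms cannot sit inside $\lambda^L$ and be premultiplied by $\nabla\mathcal U$ unless the $U$-blocks are identities, as they are in Section~\ref{appl}. Your fix, keeping $\tau_1(x-x^t)$ and $p_i\tau_2(y_i-y_i^t)$ as separate additive terms of $\nabla F^L$, is the reading consistent with (\ref{H0-Ht}).
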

\begin{proof}
	Problem (\ref{step2}) is equivalent to the smooth and convex programming
    \begin{equation*}
	\begin{aligned}
	\min 
	&~F^L_{\rho_t,{\bf x}_{\bf y}^t,\varpi^t,\tau}(	\textbf{x}_{\textbf{y}}) \quad 
	{\rm s.t.}\quad	\textbf{x}_{\textbf{y}}\in \Phi,
	\end{aligned}
	\end{equation*} 
	By \cite[Theorem 6.12, pp. 207]{Variational},  $\bar{\textbf{x}}_{\textbf{y}}$ is a global optimal solution of problem (\ref{step2})  if and only if $\bar{\textbf{x}}_{\textbf{y}}$ is a solution of  the   SVI (\ref{svi1}). 
	Furthermore, by assumption A5, we know that the RCQ holds at any point {\rm $\bar{\textbf{x}}_{\textbf{y}} \in\Phi$ for the system $G({\bf x}_{\bf y})\in {\cal K}$}.
	Based on Lemma \ref{normalcone} and the KKT condition, we can get {\rm $\bar{\textbf{x}}_{\textbf{y}}$} is an optimal solution of  problem (\ref{step2})
		if and only if
	there exist 
	{\rm$\bar{\bf{z}}= (\bar{{\bf x}}_{\bf y};\bar{M})\in D$}		such that		{\rm $\bar{\bf{z}}$} is a solution of  the two-stage  SVI in  (\ref{SVI22}).
			\end{proof}
	\begin{lemma}\label{promonotone}
	The  two-stage SVI  in  (\ref{SVI22})
	is of maximal monotone type. 
\end{lemma}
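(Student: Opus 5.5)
We show that the set-valued map $\mathcal{T}(\mathbf{z}) := H^{L}_{\rho_t,{\bf x}_{\bf y}^t,\varpi^t,\tau}(\mathbf{z}) + N_D(\mathbf{z})$ is maximal monotone. The plan is to prove that the single-valued part $H^L:=H^{L}_{\rho_t,{\bf x}_{\bf y}^t,\varpi^t,\tau}$ is continuous and monotone on all of $\mathbb{R}^{m_1+m_2K}\times\mathbb{R}^{\dim\mathcal{K}}$. Since $N_D$ is the normal cone of the nonempty closed convex set $D$, it is maximal monotone. The sum of a continuous monotone map with full domain and a maximal monotone operator is then maximal monotone by the standard sum rule \cite[Example 12.7 and Exercise 12.8]{Variational}.

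\textbf{Step 1: $H^L$ is a gradient in ${\bf x}_{\bf y}$ plus a skew part in $M$.} Because $\nabla\mathcal{U}({\bf x}_{\bf y})$ is the constant block-diagonal matrix $\mathrm{diag}(U_1^T,U_{\xi_i}^T;i\in[K])$, and $\lambda^L$ in (\ref{lambdaL}) is affine in ${\bf x}_{\bf y}$, the term $\nabla\hat f({\bf x}_{\bf y})+\nabla\mathcal{U}\,\lambda^L({\bf x}_{\bf y})$ is exactly $\nabla F^L_{\rho_t,{\bf x}_{\bf y}^t,\varpi^t,\tau}({\bf x}_{\bf y})$. To verify this, we differentiate the surrogates: the gradient of $\hat r_{1,\rho_t,\tau_1}$ is
\[
\frac{1}{\rho_t}U_1^T(U_1x+u_1)-\frac{1}{\rho_t}U_1^T\zeta_1^t+\tau_1(x-x^t),
\]
and the scenario blocks carry the weight $p_i$. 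One small point needs attention: in (\ref{lambdaL}) the term $\tau_1(x-x^t)$ sits inside $\lambda^L$, which is then multiplied by $U_1^T$. We therefore read the regularization as entering in the ${\bf x}_{\bf y}$-space, i.e.\ as $U_1^T$ applied to the prox-related part plus $\tau_1(x-x^t)$ added directly. Either way, the ${\bf x}_{\bf y}$-block equals $\nabla F^L$ up to an affine map whose linear part is $\mathrm{diag}(\tau_1 I\text{ or }\tau_1U_1^T,\ \ldots)$, and we will check that this linear part is positive semidefinite. Here $F^L$ is convex by A2, the convexity of $\|U_1x+u_1\|^2$, and the positive quadratic regularization, with the linear term contributing nothing to curvature. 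Hence $\nabla F^L$ is monotone. Also $G$ is affine, $G({\bf x}_{\bf y})=\mathcal{A}{\bf x}_{\bf y}-g$, so $\nabla G({\bf x}_{\bf y})M=\mathcal{A}^TM$.

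\textbf{Step 2: monotonicity computation.} For $\mathbf{z}=({\bf x}_{\bf y};M)$ and $\mathbf{z}'=({\bf x}_{\bf y}';M')$ we compute
\[
\langle H^L(\mathbf{z})-H^L(\mathbf{z}'),\mathbf{z}-\mathbf{z}'\rangle=\langle\nabla F^L({\bf x}_{\bf y})-\nabla F^L({\bf x}_{\bf y}'),{\bf x}_{\bf y}-{\bf x}_{\bf y}'\rangle+\langle\mathcal{A}^T(M-M'),{\bf x}_{\bf y}-{\bf x}_{\bf y}'\rangle-\langle\mathcal{A}({\bf x}_{\bf y}-{\bf x}_{\bf y}'),M-M'\rangle .
\]
The last two terms cancel because they form a skew-symmetric pair. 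The first term is nonnegative by convexity of $F^L$, so $H^L$ is monotone. It is continuous, since $\nabla\hat f$ is $C^1$ by A2 and everything else is affine.

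\textbf{Step 3: conclude.} $D=\mathbb{R}^{m_1}\times(\mathbb{R}^{m_2})^{\otimes K}\times\mathcal{K}^\circ$ is a nonempty closed convex cone, so $N_D=\partial\delta_D$ is maximal monotone. Adding the continuous monotone single-valued map $H^L$ with full domain preserves maximal monotonicity, which proves the claim. The two-stage (nonanticipativity) structure is built into $D$ and $G$ and does not affect the argument.

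The main obstacle is Step 1, namely confirming that the ${\bf x}_{\bf y}$-block of $H^L$ is genuinely a monotone map, i.e.\ a gradient of a convex function, given how $\tau$ and $p_i$ are placed inside $\lambda^L$ in (\ref{lambdaL}). If the placement gives the block $U_1^T\tau_1(x-x^t)$ rather than $\tau_1(x-x^t)$, we would instead show directly that the resulting linear part is positive semidefinite; failing that, we would interpret (\ref{lambdaL}) consistently with $\nabla F^L$ as intended by Theorem~\ref{lemma1}.
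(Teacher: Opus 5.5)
Your proof is correct and follows the paper's route. The paper only defers to Theorem 3.2 and Corollary 3.1 of \cite{Lizhang}: with affine constraints, the KKT map of the convex program is the gradient of a convex function plus a skew-symmetric multiplier part, hence monotone, and continuity together with $N_D$ then gives maximal monotonicity, which is exactly your Steps 1--3.

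One thing to tidy in Step 1. The placement of $\tau_1(x-x^t)$ inside $\lambda^L$ in (\ref{lambdaL}) is a dimensional typo. If it were read literally as $\tau_1U_1^T(x-x^t)$ with $U_1$ square, the linear part need not be positive semidefinite (for example $U_1=-I$), so your ``either way'' claim is wrong for that reading. You should simply adopt the reading in which the ${\bf x}_{\bf y}$-block is $\nabla F^L$, which is what (\ref{svi1}) and Theorem~\ref{lemma1} require.
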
	
    \begin{proof}
          By using the fact that all the functions in the  constraints of T-NNS-SCP (\ref{stage1})-(\ref{stage2}) are affine, similarly to the proof of Theorem 3.2 and Corollary 3.1 in \cite{Lizhang}, we can prove the lemma without difficulty. 
  \end{proof}
            
Let us 
		define the natural residual
			function 
			\begin{equation}\label{HLnat}
				\begin{aligned}
					&H_{\rho_t,{\bf x}_{\bf y}^t,\varpi^t,\tau,D}^{L, \rm{nat}}(\textbf{z}):=\textbf{z}-{\bf Proj}_{D}\bigg(\textbf{z}-H_{\rho_t,{\bf x}_{\bf y}^t,\varpi^t,\tau}^{L}(\textbf{z})\bigg).\\
				\end{aligned}	
			\end{equation}
	By direct computation, we have
			\begin{eqnarray}\label{H0-Ht}
				\left\|H^L_{\rho_t,{\bf x}_{\bf y}^t,\varpi^t,0}({\bf{z}}) - H^L_{\rho_t,{\bf x}_{\bf y}^t,\varpi^t,\tau}({\bf{z}})\right\| = \tau_1 \left\|x-x^t\right\| + \tau_2 \sum_{i=1}^K p_i \left\| y_i - y_i^t\right\|.
			\end{eqnarray}
			By the nonexpansive property of the projection operator we find
			\begin{eqnarray}\label{tH0-tHt}
				& & \left\|H^{L,{\rm nat}}_{\rho_t,{\bf x}_{\bf y}^t,\varpi^t,\tau,D}({\bf z})-H^{L, {\rm nat}}_{\rho_t,{\bf x}_{\bf y}^t,\varpi^t,0,D}({\bf z}) \right\| \nonumber\\
				&=& \left\|{\bf Proj}_D \left({\bf z}- H^L_{\rho_t,{\bf x}_{\bf y}^t,\varpi^t,0}({\bf{z}})\right) - {\bf Proj}_D \left({\bf z}- H^L_{\rho_t,{\bf x}_{\bf y}^t,\varpi^t,\tau}({\bf{z}})\right)\right\| \nonumber \\
				&\le&  \left\| H^L_{\rho_t,{\bf x}_{\bf y}^t,\varpi^t,\tau}({\bf{z}}) - H^L_{\rho_t,{\bf x}_{\bf y}^t,\varpi^t,0}({\bf{z}})\right\| \nonumber \\
				&=& \tau_1 \left\|x-x^t\right\|+ \tau_2 \sum_{i=1}^K p_i \left\| y_i - y_i^t\right\|.
			\end{eqnarray}	
		 
	We are ready to  describe our 
			SDC  method for solving (\ref{new_prob}) 	
            in  Algorithm \ref{algorithm1}.

\begin{algorithm}
\caption{Successive DC approximation method (SDC-PHM) for solving (\ref{new_prob}) 
}\label{algorithm1}
\begin{algorithmic}[1]
\State \textbf{Step 0.} Pick $\eta = (\eta_1,\eta_2,\eta_3)>0$ and $\tau=(\tau_1,\tau_2)>0$, sequences of positive numbers $\rho_t$ with $\rho_{t}\downarrow 0$.  Initialize ${\bf{z}}^0 \in D$ and set $t=0$.

    \State \textbf{Step 1.} Successive PHM for approximately solving (MEAP) with $\rho = \rho_t$.
	Set ${\bf z}^{t,0} = {\bf z}^0$, and $l=0$.

    \State   	\quad \textbf {Step 1-1.} Pick $\zeta^{t,l}_1 \in {\rm prox}_{\rho_t P_1} (U_1 x^{t,l} + u_1)$ and $\zeta_{\xi_i}^{t,l} \in {\rm prox}_{\rho_t P_{\xi_i}}(U_{\xi_i} y_i^{t,l}+u_{\xi_i})$, $i\in [K]$.
				Compute 
				\begin{eqnarray*}
					\varpi_1^{t,l}=\frac{1}{\rho_t} U_1^T \zeta_1^{t,l} \in 
					\partial R_{1,\rho_t}(x^{t,l})
					\quad \mbox{and}\quad
					\varpi_{\xi_i}^{t,l} = \frac{1}{\rho_t} U_{\xi_i}^T \zeta_{\xi_i}^{t,l} \in \partial R_{\xi_i,\rho_t}(y_i^{t,l}), ~i\in [K].
				\end{eqnarray*}

		   \State		\quad {\textbf {Step 1-2.}} Use the PHM (Algorithm \ref{algorithm2})  to approximately solve the following two-stage SVI 
				\begin{equation}\label{L-M}
					0\in	H^{L}_{\rho_t,{\bf x}_{\bf y}^{t,l},\varpi^{t,l},\tau}(	\textbf{z})+N_{D}(	\textbf{z}),
				\end{equation}
				\quad and get $\tilde{\textbf{{z}}}$	satisfying 
				\begin{equation}\label{resd}
					\left\|H_{\rho_t,{\bf x}_{\bf y}^{t,l},\varpi^{t,l},\tau,D}^{L,{\rm nat}}(\tilde{\textbf{z}})\right\|
					\le \eta_1\rho_t,
				\end{equation} 
				\quad and 
				\begin{equation}\label{LKLK_0}
					\begin{aligned}
						F^L_{\rho_t,{\bf x}_{\bf y}^{t,l},\varpi^{t,l},\tau}(\tilde{\textbf{x}}_{\textbf{y}})&\leq F^L_{\rho_t,{\bf x}_{\bf y}^{t,l},\varpi^{t,l},\tau}(\textbf{x}^{t,l}_{\textbf{y}}) + \frac{\eta_2}{(l+1)^2}.
					\end{aligned}
				\end{equation}	
				
	   \State \quad {\bf Step 1-3.}
				Set		${\textbf{z}}^{t,l+1}:= \tilde{\textbf{z}}$. 
				If  $\tilde{\bf{z}} $ does not satisfy
				\begin{equation}\label{TC}
					\begin{aligned}  
						\tau_1 \left\|\tilde x-x^{t,l}\right\|+ \tau_2 \sum_{i=1}^K p_i \left\| \tilde y_i - y_i^{t,l}\right\|\leq \eta_3 \rho_t^2,
					\end{aligned}
				\end{equation}
				\quad set $l:=l+1$ and go to {\textbf{Step 1-1}}.  
				Otherwise, set 
				$l_t := l+1$ and  go to 
				{\textbf{Step 2}}.

          \State        	{\bf Step 2.}  Update ${\textbf{z}}^{t+1} := {\textbf{z}}^{t,l_t}$, $t:= t+1$ and go to {\textbf{Step 1}}. 
\end{algorithmic}
\end{algorithm}

       How to approximately solve the two-stage SVI in (\ref{L-M}) in Algorithm \ref{algorithm1} efficiently
            is an important issue.  The two-stage SVI have been studied in \cite{TK,RoWets}. The  PHM  developed in \cite{RoSun} can efficiently  solve the two-stage SVI, because it can use parallel computation when facing a large number of scenarios, and efficient algorithms, e.g., the extra gradient method \cite[Algorithm 12.1.9]{Finite-Dimensional}, the  semi-smooth Newton method 
            \cite{semi1,semi2}, or the smoothing method \cite{smooth1,smooth2} can be employed for solving the subproblems in PHM.
             
		Since PHM is a decomposition method and in each iterate, a SVI corresponding to the first stage and the second stage with respect to a single scenario is considered, we denote by 
        $${\cal K}_i, D_i,(x_i;y_i), M^i, {\bf z}_i, {\hat f}^i(x_i,y_i), {\cal U}^i(x_i,y_i), G^i(x_i,y_i),
        \lambda_{\rho_t,{\bf x}_{\bf y}^t, \varpi^t,\tau}^{i,L}(x_i,y_i), H^{i,L}_{\rho_t,{\bf x}_{\bf y}^t \varpi^t,\tau}({\bf z}_i)  
        $$
       the counterparts to
        $${\cal K}, D, {\bf x}_{\bf y}, M, {\bf z}, {\hat f}({\bf x}_{\bf y}), {\cal U}({\bf x}_{\bf y}), G({\bf x}_{\bf y}), \lambda_{\rho_t,{\bf x}_{\bf y}^t, \varpi^t,\tau}^L({\bf x}_{\bf y}), H_{\rho_t,{\bf x}_{\bf y}^t, \varpi^t,\tau}^L({\bf z}) $$ in Table \ref{tab:symbols}, (\ref{lambdaL}) and (\ref{HL}), respectively,  by deleting the parts that  correspond to $j\ne i, i\in [K]$. For instance, 
        \begin{eqnarray*}
{\cal K}_i:=\{0_{n_1}\} \times -C_1 \times \{0_{n_2}\} \times -C_{2,\xi_i},\quad 
D_i := \mathbb{R}^{m_1}\times \mathbb{R}^{m_2}\times 
({\cal K}_i)^{\circ},\ i\in[K].
        \end{eqnarray*}
   
        Let  
			${\bf{\Lambda}}_{i} := (u_i,c_{1,i},c_{2,i}) \in \mathbb{R}^{m_1} \times \mathbb{R}^{n_1} \times \mathbb{R}^{s_1}$, where the elements ${u}_i$, ${c}_{1,i}$ and ${c}_{2,i}$ have the role of Lagrangian multipliers corresponding to the nonanticipativity for the first-stage vectors ${x}$, ${\alpha}_1$ and ${\alpha}_2$, and define
		\begin{equation*}\label{Hi}
    C_2^i(\breve{\mathbf{\Lambda}}_{i}^{\nu}):=(\breve{u}_{i}^{{\nu}};0_{\{m_2\}};\breve{c}_{1,i}^{{\nu}};\breve{c}_{2,i}^{{\nu}};0_{\{n_2+s_2\}}).
		\end{equation*}
       We are ready to give the  framework of the progressive hedging method (PHM) to approximately solve (\ref{L-M}) in Algorithm \ref{algorithm2} below. 
\begin{algorithm}
\caption{PHM 
for approximately solving (\ref{L-M})
}\label{algorithm2}
\begin{algorithmic}[1]
\State 	Given initial points $\breve{\textbf{z}}^0\in\mathcal{N}$ and $\breve{\Lambda}^0\in\mathcal{M}$
			with $\sum_{i=1}^Kp_i	\breve{u}_{i}^0=0$, 
			$\sum_{i=1}^Kp_i	\breve{c}_{1,i}^0=0$, $\sum_{i=1}^Kp_i	\breve{c}_{2,i}^0=0$,	
			$	\breve{v}_i^0=	\breve{s}_{1,i}^0=	\breve{s}_{2,i}^0=0$, $i=1,\dots,K$. Choose a step size $\sigma>0$. Set $\nu=0$.

    \State 	{\bf Step 1.}\quad For $i\in [K]$, solve the variational inequality 
			\begin{equation}\label{SVI2}
				0\in 	H^{i,L}_{\rho_t,x^t_i,y_i^t,\varpi^t,\tau}(	\textbf{z}_i)
				+C_2^i(\breve{\mathbf{\Lambda}}_{i}^{\nu}) + 
				\sigma(\textbf{z}_{i}-\breve{\textbf{z}}_{i}^{\nu})+N_{D_i}(	\textbf{z}_i),
			\end{equation}	
				to obtain a solution $\hat{\textbf{z}}_{i}^{\nu}$.

      \State  	{\bf Step 2.}\quad For $i\in [K]$, let					
			\begin{equation*}
				\begin{aligned}
					&	\breve{x}^{\nu+1}= \sum_{i=1}^{K}p_i\hat{x}_{i}^{\nu}, ~  		\breve{\alpha}_1^{\nu+1} = \sum_{i=1}^Kp_i\hat{\alpha}_{1,i}^{\nu}, 
				~ \breve{\alpha}_2^{\nu+1} = \sum_{i=1}^K p_i\hat{\alpha}_{2,i}^{\nu}, 
					\\
					& \breve{y}_{i}^{\nu+1} = \hat{y}_{i}^{\nu},		
					~ \breve{\pi}_{1,i}^{\nu+1}= \hat{\pi}_{1,i}^\nu,   	
					~ \breve{\pi}_{2,i}^{\nu+1} = \hat{\pi}_{2,i}^\nu, ~\breve{u}_{i}^{\nu+1} = \breve{u}_{i}^{\nu}+\sigma(\hat{x}_{i}^{\nu}-	\breve{x}^{\nu+1}), 
			 \\
					& \breve{c}_{1,i}^{\nu+1}= \breve{c}_{1,i}^{\nu}+\sigma(\hat{\alpha}_{1,i}^{\nu}-\breve{\alpha}_1^{\nu+1}), 
				~ 
					{\breve{c}}_{2,i}^{\nu+1} = 	\breve{c}_{2,i}^{\nu}+\sigma(\hat{\alpha}_{2,i}^{\nu}-	\breve{\alpha}_2^{\nu+1}).
					\\			
				\end{aligned}
			\end{equation*}		
			
			If (\ref{resd}) and (\ref{LKLK_0}) are satisfied at $\breve{\textbf{z}}$, then stop and set $\tilde{\textbf{z}}:=\breve{\textbf{z}}$.		Else,	
		set $\nu:=\nu+1$, and go to {\bf Step 1}. 
\end{algorithmic}
\end{algorithm}

\newpage
	\subsection{Theoretical guarantee for global convergence}
In this subsection, we first show that the SDC in Algorithm \ref{algorithm1},
where the two-stage SVI \eqref{L-M} is approximately solved by the PHM method in Algorithm \ref{algorithm2},
is well-defined in Theorem \ref{well-defined} under mild assumption {\bf A8} or {\bf A9} below.

	
	\begin{description}
		\item[{\bf A8.}] 
		The function $c(\cdot)$ is bounded from below on $ \mathbb{R}^{m_1}$  and the function $q_{\xi_i}(\cdot)$  is bounded from  below on  $ \mathbb{R}^{m_2}$ for each $i\in [K]$. 
		
		\item[{\bf A9.}] The set ${\{\bf{x}_{\bf y}}
		\ :\ 
		\hat f ({\bf x}_{\bf y}) 
		\le \upsilon\}$ is bounded for any $\upsilon \in \mathbb{R}$. 
	\end{description}
	
	\begin{theorem}\label{well-defined}
		Suppose that the successive PHM in {\bf{Step 1}} of Algorithm \ref{algorithm1} is   applied to approximately solve
     (MEAP) with $\rho = \rho_t$  
		in the $(t+1)$-th iteration of SDC. 
		Then the following statements hold.
		
		(i) For each fixed $l$
		and the corresponding two-stage SVI in {\bf Step 1-2} of Algorithm \ref{algorithm1}, 
        the criteria 
		{\rm (\ref{resd})} and {\rm(\ref{LKLK_0})} can be  satisfied by PHM in Algorithm \ref{algorithm2} within finite iterations. 
		
		(ii)
		If, in addition,
	assumption {\bf A8} or {\bf A9} holds, then  
		the
		criterion {\rm(\ref{TC})} in {\bf Step 1-3} of Algorithm \ref{algorithm1} is satisfied after finite calls of {\bf{Step 1-1}} and {\bf{Step 1-2}}.
	\end{theorem}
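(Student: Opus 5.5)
For part (i), I will rely on the convergence theory of the PHM for maximal monotone two-stage SVIs. By Theorem \ref{lemma1} the subproblem (\ref{step2}) has a solution, and by Lemma \ref{promonotone} the SVI (\ref{L-M}) is of maximal monotone type. The convergence result of Rockafellar and Sun \cite{RoSun} (as used in \cite{Lizhang}) then shows that the PHM iterates $\breve{\bf z}^\nu$ converge to some solution $\bar{\bf z}=(\bar{\bf x}_{\bf y};\bar M)$ of (\ref{L-M}). Since $H^L_{\rho_t,{\bf x}_{\bf y}^{t,l},\varpi^{t,l},\tau}$ is continuous and the projection is nonexpansive, the natural residual (\ref{HLnat}) is continuous and vanishes at $\bar{\bf z}$. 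Hence (\ref{resd}) holds for all large $\nu$.

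For (\ref{LKLK_0}), I use Theorem \ref{lemma1} again: $\bar{\bf x}_{\bf y}$ is a global minimizer of the convex problem (\ref{step2}). The iterate ${\bf x}^{t,l}_{\bf y}$ is feasible, because ${\bf z}^0\in D$ is assumed and earlier iterates satisfy (\ref{resd}); if feasibility is not available, I will instead compare against the optimal value. Thus $F^L(\bar{\bf x}_{\bf y})\le F^L({\bf x}^{t,l}_{\bf y})$. Since $F^L$ is continuous, $F^L(\breve{\bf x}^\nu_{\bf y})\to F^L(\bar{\bf x}_{\bf y})$, and the slack $\eta_2/(l+1)^2>0$ ensures that (\ref{LKLK_0}) eventually holds. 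Both criteria are therefore met at a common finite $\nu$.

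For part (ii), I will run a descent argument on the (MEAP) objective $F_{\rho_t}$. Write ${\bf x}^{t,l}_{\bf y}$ for the iterates. Combining (\ref{ineq1}) at the new iterate, (\ref{LKLK_0}) and (\ref{ineq2}) gives
\begin{equation*}
F_{\rho_t}({\bf x}^{t,l+1}_{\bf y})+\frac{\tau_1}{2}\|x^{t,l+1}-x^{t,l}\|^2+\frac{\tau_2}{2}\sum_{i=1}^K p_i\|y_i^{t,l+1}-y_i^{t,l}\|^2\le F_{\rho_t}({\bf x}^{t,l}_{\bf y})+\frac{\eta_2}{(l+1)^2}.
\end{equation*}
Summing over $l$ makes the errors summable ($\sum\eta_2/(l+1)^2<\infty$). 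Suppose (\ref{TC}) never holds. Then $\tau_1\|x^{t,l+1}-x^{t,l}\|+\tau_2\sum_i p_i\|y_i^{t,l+1}-y_i^{t,l}\|>\eta_3\rho_t^2$ for all $l$, so the squared step terms are bounded below by a positive constant, using $\|\cdot\|^2$ versus the weighted sum via Cauchy--Schwarz. Summing the inequality then forces $F_{\rho_t}({\bf x}^{t,l}_{\bf y})\to-\infty$.

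The main obstacle is to rule this out, since the iterates need not be feasible and (A3) only bounds $\Phi$. Under A8, I use (\ref{rho-rhot}): $r_{1,\rho}\ge\inf P_1>-\infty$ and similarly $r_{\xi_i,\rho}$, so $F_{\rho_t}$ is bounded below on the whole space, giving the contradiction. Under A9, the inequality keeps $F_{\rho_t}({\bf x}^{t,l}_{\bf y})\le F_{\rho_t}({\bf x}^{t,0}_{\bf y})+\eta_2\pi^2/6$. With the lower bound on the envelopes, this bounds $\hat f({\bf x}^{t,l}_{\bf y})$ above, so the iterates lie in a bounded level set of $\hat f$. Because $\hat f$ is continuous and the envelopes are bounded below, $F_{\rho_t}$ is bounded below there, which again gives the contradiction. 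Hence (\ref{TC}) holds after finitely many calls of Steps 1-1 and 1-2.
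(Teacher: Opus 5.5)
Your proposal follows the paper's proof essentially step for step. In (i) you use PHM convergence to a solution of the maximal monotone SVI (\ref{L-M}), continuity of the natural residual and of $F^L_{\rho_t,{\bf x}_{\bf y}^{t,l},\varpi^{t,l},\tau}$, and optimality of the limit via Theorem~\ref{lemma1}. In (ii) you use the same descent inequality from (\ref{ineq1}), (\ref{LKLK_0}), (\ref{ineq2}) and the same lower bounds under \textbf{A8} or \textbf{A9}. Your contradiction form, with the Cauchy--Schwarz bound on the squared steps, is equivalent to the paper's conclusion that the steps tend to zero.

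The one step that does not hold as you justify it is $\min_{\Phi}F^L_{\rho_t,{\bf x}_{\bf y}^{t,l},\varpi^{t,l},\tau}\le F^L_{\rho_t,{\bf x}_{\bf y}^{t,l},\varpi^{t,l},\tau}({\bf x}^{t,l}_{\bf y})$, which is exactly what (\ref{LKLK_0}) needs. Your reason for ${\bf x}^{t,l}_{\bf y}\in\Phi$ is false. $D$ places no restriction on the primal block, so ${\bf z}^0\in D$ says nothing about $G({\bf x}^0_{\bf y})\in\mathcal{K}$. Likewise, (\ref{resd}) only bounds the natural residual by $\eta_1\rho_t$, which gives approximate, not exact, feasibility; the paper explicitly allows infeasible iterates.

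Your fallback, ``compare against the optimal value'', does not repair this, because the right-hand side of (\ref{LKLK_0}) is the value at ${\bf x}^{t,l}_{\bf y}$, not $\min_\Phi F^L$. Suppose ${\bf x}^{t,l}_{\bf y}\notin\Phi$ and $F^L({\bf x}^{t,l}_{\bf y})+\eta_2/(l+1)^2<\min_\Phi F^L$. This can happen, since the slack shrinks with $l$ while the residual bound $\eta_1\rho_t$ does not. Then $F^L(\breve{\mathbf{x}}^{\nu}_{\bf y})\to\min_\Phi F^L$ forces (\ref{LKLK_0}) to fail for all large $\nu$, so your limiting argument yields nothing.

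The paper's proof uses the very same inequality without comment, so you are not missing an idea the paper supplies; the step is simply unsupported in both. You should state it as an extra hypothesis (feasibility of ${\bf x}^{t,l}_{\bf y}$, which holds for $l=0$ if ${\bf x}^0_{\bf y}\in\Phi$) or modify the criterion. Do not claim it follows from ${\bf z}^0\in D$ and (\ref{resd}).
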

	\begin{proof}			
		(i) Let $\{\check{\textbf{z}}^{\nu}\}$ be the sequence generated by Algorithm \ref{algorithm2}  for solving (\ref{L-M}). By the convergence result of PHM, as $\nu$ tends to infinity, $\check{\textbf{z}}^{\nu}$ converges to a point  $\check{\textbf{z}}^{*}$ that is a solution of the two-stage SVI in (\ref{L-M}). Therefore, 
		\begin{eqnarray*}
			\lim_{\nu \to 
				+\infty} \left\|H_{\rho_t,{\bf x}_{\bf y}^t,\varpi^t,\tau,D}^{L,{\rm{nat}}}(\check{\textbf{z}}^{\nu})\right\| = \left\|H_{\rho_t,{\bf x}_{\bf y}^t,\varpi^t,\tau,D}^{L,{\rm{nat}}}(\check{\textbf{z}}^*)\right\| = 0.
		\end{eqnarray*}
		Thus for the constant $\varepsilon_1:=\eta_1\rho_t>0$, there exists $\Gamma(\varepsilon_1)\in \mathbb{N}:=\{1,2,\ldots\}$ such that for all $\nu > \Gamma(\varepsilon_1)$
		$$\left\|H_{\rho_t,{\bf x}_{\bf y}^t,\varpi^t,\tau,D}^{L,{\rm{nat}}}(\check{\textbf{z}}^{\nu})\right\|\le \eta_1\rho_t.$$
		By Theorem \ref{lemma1}, we know that the block $\check{\textbf{x}}_{\bf{y}}^*$ of $\check{\bf{z}}^*$ is an optimal solution of (\ref{step2}).
		Because $	F^L_{\rho_t,{\bf x}_{\bf y}^{t,l},\varpi^{t,l},\tau}$ is continuous, we have
		\begin{equation}\label{limm}
			\lim_{\nu\rightarrow+\infty}F^L_{\rho_t,{\bf x}_{\bf y}^{t,l},\varpi^{t,l},\tau}(\check{\textbf{x}}^{\nu}_{\textbf{y}})=F^L_{\rho_t,{\bf x}_{\bf y}^{t,l},\varpi^{t,l},\tau}(\check{\textbf{x}}^*_{\textbf{y}})=\min_{	\textbf{x}_{\textbf{y}}\in\Phi}F^L_{\rho_t,{\bf x}_{\bf y}^{t,l},\varpi^{t,l},\tau}(	\textbf{x}_{\textbf{y}}).
		\end{equation} 	
		Then for the constant $\varepsilon_2:=\frac{\eta_2}{(l+1)^2}>0$, there exists  $\Gamma_1(\varepsilon_2)\in\mathbb{N}$, such that for any $\nu>\Gamma_1(\varepsilon_2)$,
		\begin{equation}\label{limmm}
			\left|F^L_{\rho_t,{\bf x}_{\bf y}^{t,l},\varpi^{t,l},\tau}(\check{\textbf{x}}^{\nu}_{\textbf{y}})-F^L_{\rho_t,{\bf x}_{\bf y}^{t,l},\varpi^{t,l},\tau}(\check{\textbf{x}}^*_{\textbf{y}})\right|\leq \varepsilon_2,
		\end{equation}
		and consequently  for any $\nu>\Gamma_1(\varepsilon_2)$,
		\begin{equation}
			\begin{aligned}
				F^L_{\rho_t,{\bf x}_{\bf y}^{t,l},\varpi^{t,l},\tau}(\check{\textbf{x}}^{\nu}_{\textbf{y}})\leq F^L_{\rho_t,{\bf x}_{\bf y}^{t,l},\varpi^{t,l},\tau}(\check{\textbf{x}}^{*}_{\textbf{y}})+  \varepsilon_2
				\leq F^L_{\rho_t,{\bf x}_{\bf y}^{t,l},\varpi^{t,l},\tau}(\textbf{x}^{t,l}_{\textbf{y}})+  \varepsilon_2.
			\end{aligned}
		\end{equation}
		It is clear that for $\bar{\nu} = \max\{\Gamma(\varepsilon_1),\Gamma_1(\varepsilon_2)\}+1$, $\tilde{\bf z} = \check{\bf z}^{\bar \nu}$ satisfies both (\ref{resd}) and (\ref{LKLK_0}). This guarantees that the conditions \eqref{resd} and \eqref{LKLK_0}
are satisfied by Algorithm \ref{algorithm2} after finitely many iterations.
        
		(ii) 
	By (\ref{ineq1}) with ${\bf{x}}_{\bf y}^{t,l+1}$,  (\ref{LKLK_0}) 
	of Algorithm \ref{algorithm1},
	and (\ref{ineq2}), we get    
\begin{eqnarray}\label{Frhot-sequence}
			& & F_{\rho_t}(\textbf{x}^{t,l+1}_{\textbf{y}})	\nonumber\\
			&\leq& F^L_{\rho_t,{\bf x}_{\bf y}^{t,l},\varpi^{t,l},\tau}(\textbf{x}^{t,l+1}_{\textbf{y}})-\frac{\tau_1}{2} \left\|x^{t,l+1} - x^{t,l}\right\|^2 - \frac{\tau_2}{2} \sum_{i=1}^K p_i \left\|y_i^{t,l+1} - y_i^{t,l}\right\|^2 \nonumber\\
			&\leq& F^L_{\rho_t,{\bf x}_{\bf y}^{t,l},\varpi^{t,l},\tau}(\textbf{x}^{t,l}_{\textbf{y}})+ \frac{\eta_2}{(l+1)^2}-\frac{\tau_1}{2} \left\|x^{t,l+1} - x^{t,l}\right\|^2 - \frac{\tau_2}{2} \sum_{i=1}^K p_i \left\|y_i^{t,l+1} - y_i^{t,l}\right\|^2 \nonumber\\
			&=& F_{\rho_t}(\textbf{x}^{t,l}_{\textbf{y}})-\frac{\tau_1}{2} \left\|x^{t,l+1} - x^{t,l}\right\|^2 - \frac{\tau_2}{2} \sum_{i=1}^K p_i \left\|y_i^{t,l+1} - y_i^{t,l}\right\|^2+ \frac{\eta_2}{(l+1)^2}.
		\end{eqnarray}	
	
	By summing up $l$ from 0 to $\hat L$ of the above inequalities, letting $\hat L$ tend to $+\infty$, and using the fact $\sum\limits_{j=0}^{+\infty}\frac{\eta_2}{(j+1)^2}=\frac{\pi^2}{6}\eta_2$, 
    we obtain	
	\begin{eqnarray}\label{leftt}
		& & \sum_{l=0}^{+\infty}\frac{\tau_1}{2}\left\|x^{t,l+1}-x^{t,l}\right\|^{2} + \frac{\tau_2}{2} \sum_{l=0}^{+\infty} \sum_{i=1}^{K} p_i \left\|y_i^{t,l+1} - y_i^{t,l}\right\|^2 +\liminf_{\hat L\to  +\infty}F_{\rho_t}(\textbf{x}^{t,\hat L+1}_{\textbf{y}}) \nonumber\\
		&\leq&  F_{\rho_t}(\textbf{x}^{t,0}_{\textbf{y}})+	\sum_{j=0}^{+\infty}\frac{\eta_2}{(j+1)^2} < +\infty.
	\end{eqnarray}
	
	Next, we show that $\liminf_{{\hat L} \to + \infty} F_{\rho_t}({\bf x}_{\bf y}^{t, {\hat L}+1}) $ is bounded from below, under either assumption {\bf A8} or {\bf A9}. Using (\ref{rho-rhot}), we have for any $\rho>0$,
		\begin{eqnarray}\label{Frho_lower}
			F_{\rho}({\bf{x}}_{\bf{y}})
			= {\hat f}({\bf x}_{\bf y}) + r_{1,\rho}(x) + \sum_{i=1}^K p_i r_{\xi_i,\rho}(y_i),
		\end{eqnarray}
		and there exists $\underline{r}(\rho)\in \mathbb{R}$ such that 
		$$\inf_{{\bf x}_{\bf y}
			\in \mathbb{R}^{m_1+ m_2 K} 
		} \left\{r_{1,\rho}(x)+\sum_{i=1}^K p_i r_{\xi_i,\rho}(y_i)\right\}\ge \underline{r}(\rho),$$
		by assumption {\bf A1}, (\ref{r-r}), and (\ref{rho-rhot}) with $P_0$ being replaced by $P_1$ and $P_{\xi_i}$, respectively.
	
Thus, if assumption {\bf A8} holds, we know ${\hat f}({\bf x}_{\bf y})$ is also bounded from below and consequently ${\liminf_{\hat L\to  +\infty}F_{\rho_t}(\textbf{x}^{t,\hat L+1}_{\textbf{y}})}$ is bounded from below.
Furthermore, by (\ref{Frhot-sequence}), $\sum_{j=0}^{+\infty} \frac{\eta_2}{(j+1)^2}= \frac{\pi^2}{6}\eta_2$, and (\ref{Frho_lower}), we find that for any $l\ge 0$, 
		\begin{eqnarray}
			{\hat f}({\bf x}_{\bf y}^{t,l+1}) + \underline{r}(\rho_t) \le F_{\rho_t}({\bf{x}}_{\bf{y}}^{t,l+1}) \le F_{\rho_t}({\bf x}_{\bf{y}}^{t,0} )+ \frac{\pi^2}{6}\eta_2.
		\end{eqnarray}
		Hence, if assumption {\bf A9} holds, then we have $\{{\bf x}_{\bf y}^{t,l+1}\}_{l\ge 0}$ is bounded, and consequently we find ${\liminf_{\hat L\to  +\infty}F_{\rho_t}(\textbf{x}^{t,\hat L+1}_{\textbf{y}})}$ is also lower bounded.
	
Therefore,  under either assumption {\bf A8} or assumption {\bf A9}, (\ref{leftt}) indicates that   
	\begin{equation}\label{6000}
		\lim_{l\rightarrow+\infty} \left\|{\bf x}_{\bf y}^{t,l+1}-{\bf x}_{\bf y}^{t,l}\right\|=0.
	\end{equation}	
	Then for the constant $\varepsilon_3:=  \eta_3\rho_t^2>0$,  there exists $\hat{L}_1(\varepsilon_3)\in\mathbb{N}$ such that 
	for any $l>\hat{L}_1(\varepsilon_3)$, 
	\begin{eqnarray*}
	\tau_1 \left\| x^{t,l+1}-x^{t,l}\right\|+ \tau_2 \sum_{i=1}^K p_i \left\| y_i^{t,l+1} - y_i^{t,l}\right\| \le \varepsilon_3.
	\end{eqnarray*}
	Therefore,  the
	criterion (\ref{TC}) in {\bf Step 1-3}  is satisfied after finite calls of \bf{Step 1-1} and \bf{Step 1-2}.
\end{proof}
\begin{remark}
In fact, any algorithm for solving the maximal monotone smooth two-stage SVI can be applied in {\bf{Step 1-2}} instead of the PHM method in Algorithm \ref{algorithm2}, as long as the algorithm has the convergence result that there exists an accumulation point ${\bf \check{z}}$ of the algorithm such that it is a solution of the two-stage SVI. That is,  there exists an infinite subsequence ${\cal N}^{\sharp}\subseteq \mathbb{N}$ such that
\begin{eqnarray*}
	\lim_{\nu \to + \infty,\ \nu \in {\cal N}^{\sharp}}
	 {\bf z}^{\nu} = {\bf \check{z}}.
\end{eqnarray*}
 The well-definedness of Algorithm \ref{algorithm1}, i.e., the same results in Theorem \ref{well-defined}
  can be shown using  similar arguments, except that $\mathbb{N}$ should be replaced by ${\cal N}^{\sharp}$ in the proof for  statement (i).
\end{remark}
			
	Next, we will establish the convergence of the SDC method in Algorithm \ref{algorithm1}. We need the following assumption.
	
	\begin{description}
		\item[{\bf A10.}]  $P_1$ and $P_{\xi_i}$, $i\in [K]$ are  
			continuous functions. 
		\item [{\bf A11.}]
		$- \nabla {\cal U}({\bf x}_{\bf y}^*) \partial^{\infty} P({\cal U}({\bf x}_{\bf y}^*)) \cap N_{\Phi}({\bf x}_{\bf y}^*) = \{0\}.$
	\end{description}

\begin{theorem}[Convergence of SDC]\label{convergence} 
	Let {\rm $\{\textbf{z}^{t,l_t}\}$} be the sequence generated by the SDC method in Algorithm \ref{algorithm1}, and let {\rm $\textbf{x}^*_\textbf{y}$} be an arbitrary accumulation point of {\rm $\{\textbf{x}^{t,l_t}_\textbf{y}\}$}.
	Suppose that assumptions {\bf A5}, {\bf A7}, {\bf A10}, and {\bf A11} hold.
	Then	{\rm $\textbf{x}^*_\textbf{y}$} is a KKT point of {\rm(\ref{new_prob})}.
\end{theorem}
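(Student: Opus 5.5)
The plan is to read the inexactness criteria (\ref{resd}) and (\ref{TC}) as an approximate KKT system for (\ref{new_prob}) at $\textbf{z}^{t+1}=\textbf{z}^{t,l_t}=\tilde{\textbf{z}}$. We then show that the multipliers stay bounded and pass to the limit along the subsequence converging to $\textbf{x}^*_\textbf{y}$. Write $l=l_t-1$, so that $\tilde{\textbf{z}}$ solves the $l$-th subproblem built at $\textbf{x}^{t,l}_\textbf{y}$ with $\zeta^{t,l}_1,\zeta^{t,l}_{\xi_i}$.

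\textbf{Step 1 (from residuals to an approximate inclusion).} Set $e^t:=H^{L,{\rm nat}}_{\rho_t,{\bf x}_{\bf y}^{t,l},\varpi^{t,l},0,D}(\tilde{\textbf{z}})$. By (\ref{resd}), (\ref{tH0-tHt}) and (\ref{TC}) we get $\|e^t\|\le \eta_1\rho_t+\eta_3\rho_t^2\to 0$. The projection characterization gives
\begin{equation*}
-H^L_{\rho_t,{\bf x}_{\bf y}^{t,l},\varpi^{t,l},0}(\tilde{\textbf{z}})+e^t\in N_D(\tilde{\textbf{z}}-e^t).
\end{equation*}
Let $\hat{\textbf{z}}^t=(\hat{\textbf{x}}^t_\textbf{y};\hat M^t):=\tilde{\textbf{z}}-e^t\in D$. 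The primal block of this inclusion is an equation, because $N_D$ is zero in the primal coordinates. It reads
\begin{equation*}
\nabla\hat f(\tilde{\textbf{x}}_\textbf{y})+\nabla G\,\tilde M+\nabla\mathcal{U}\,\lambda^L_0(\tilde{\textbf{x}}_\textbf{y})=\text{(small)}.
\end{equation*}
The dual block gives $G(\tilde{\textbf{x}}_\textbf{y})+\text{(small)}\in N_{\mathcal{K}^\circ}(\hat M^t)$, which is equivalent to approximate feasibility plus complementarity via (\ref{complement}).

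\textbf{Step 2 (identifying the nonsmooth multiplier).} With $\tau=0$, the first block of $\lambda^L_0(\tilde{\textbf{x}}_\textbf{y})$ is $\frac{1}{\rho_t}(U_1\tilde x+u_1-\zeta^{t,l}_1)$, and the scenario blocks are analogous with weights $p_i$. Define
\begin{equation*}
\lambda^t:=\Bigl(\tfrac{1}{\rho_t}(U_1x^{t,l}+u_1-\zeta^{t,l}_1);\ p_i\tfrac{1}{\rho_t}(U_{\xi_i}y^{t,l}_i+u_{\xi_i}-\zeta^{t,l}_{\xi_i});\ i\in[K]\Bigr).
\end{equation*}
Lemma \ref{subgradient} gives $\lambda^t\in\partial P(\zeta^t)$, using the separable block structure of $P$ and the weights $p_i$. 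The discrepancy satisfies $\|\lambda^L_0(\tilde{\textbf{x}}_\textbf{y})-\lambda^t\|\le C\rho_t^{-1}\|\tilde{\textbf{x}}_\textbf{y}-\textbf{x}^{t,l}_\textbf{y}\|\le C'\eta_3\rho_t\to 0$ by (\ref{TC}). This is exactly why (\ref{TC}) uses $\rho_t^2$. Moreover $\tilde{\textbf{x}}_\textbf{y}-\textbf{x}^{t,l}_\textbf{y}\to0$, so along the subsequence $\textbf{x}^{t,l}_\textbf{y}\to\textbf{x}^*_\textbf{y}$. Then Lemma \ref{prox} gives $\zeta^t\to\mathcal{U}(\textbf{x}^*_\textbf{y})$, and continuity of $P$ (\textbf{A10}) gives $P(\zeta^t)\to P(\mathcal{U}(\textbf{x}^*_\textbf{y}))$, i.e.\ $\zeta^t\xrightarrow{P}\mathcal{U}(\textbf{x}^*_\textbf{y})$.

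\textbf{Step 3 (boundedness of $(\lambda^t,\hat M^t)$), which is the main obstacle.} Suppose instead that $s_t:=\|\lambda^t\|+\|\hat M^t\|\to\infty$, and divide the Step 1 equation by $s_t$. Along a further subsequence $(\lambda^t,\hat M^t)/s_t\to(\bar\lambda,\bar M)\neq0$. By the definition of $\partial^\infty$ and its outer semicontinuity, $\bar\lambda\in\partial^\infty P(\mathcal{U}(\textbf{x}^*_\textbf{y}))$. Since $N_{\mathcal{K}^\circ}$ has closed graph and $G(\tilde{\textbf{x}}_\textbf{y})\to G(\textbf{x}^*_\textbf{y})$, we get $\bar M\in N_\mathcal{K}(G(\textbf{x}^*_\textbf{y}))$, with $\textbf{x}^*_\textbf{y}\in\Phi$ in the limit. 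The limiting equation is $\nabla\mathcal{U}\bar\lambda+\nabla G\bar M=0$. Lemma \ref{normalcone} (valid by \textbf{A5}) gives $\nabla G\bar M\in N_\Phi(\textbf{x}^*_\textbf{y})$, so $-\nabla\mathcal{U}\bar\lambda\in N_\Phi(\textbf{x}^*_\textbf{y})$. Then \textbf{A11} forces $\nabla\mathcal{U}\bar\lambda=0$, and \textbf{A7} forces $\bar\lambda=0$. It remains to show $\bar M=0$ from $\nabla G\bar M=0$ and $\bar M\in N_\mathcal{K}(G(\textbf{x}^*_\textbf{y}))$. For this I would use the RCQ at $\textbf{x}^*_\textbf{y}$ from \textbf{A5}. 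Pair with the direction $d$ in (\ref{RCQ}) to kill the conic parts, since they are strictly negative on the interior unless zero. The linear independence of the equality gradients then kills the equality multipliers. This contradicts $\|(\bar\lambda,\bar M)\|=1$. I expect the care here to lie in handling the $p_i$-weights and the shift between $\tilde M$ and $\hat M^t$, which is harmless because $e^t\to0$.

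\textbf{Step 4 (passing to the limit).} With boundedness established, extract $\lambda^t\to\lambda^*$ and $\hat M^t\to M^*$. Outer semicontinuity of $\partial P$ along $\zeta^t\xrightarrow{P}\mathcal{U}(\textbf{x}^*_\textbf{y})$ gives $\lambda^*\in\partial P(\mathcal{U}(\textbf{x}^*_\textbf{y}))$. Closedness of the graph of $N_{\mathcal{K}^\circ}$ together with (\ref{complement}) gives $\textbf{x}^*_\textbf{y}\in\Phi$ and $M^*\in N_\mathcal{K}(G(\textbf{x}^*_\textbf{y}))$. Continuity of $\nabla\hat f$, $\nabla G$ and $\nabla\mathcal{U}$ turns the Step 1 equation into (\ref{SPoint}). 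Hence $\textbf{x}^*_\textbf{y}$ is a KKT point in the sense of Definition \ref{stationarity}.
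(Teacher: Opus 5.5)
Your proposal is correct and follows the paper's proof in essentially every step. It uses the same residual bound $\eta_1\rho_t+\eta_3\rho_t^2$ from (\ref{resd}), (\ref{tH0-tHt}) and (\ref{TC}), and the same multiplier: your $\lambda^t$ is the paper's $\lambda^{t,l_t-1}-\delta^{t,l_t}\in\partial P(\zeta^{t,l_t-1})$, with the $\rho_t^{-1}$ discrepancy absorbed by the $\rho_t^2$ in (\ref{TC}). It then runs the same normalization-and-contradiction argument for boundedness via \textbf{A11}, \textbf{A7} and the RCQ, and the same limit passage via \textbf{A10} and outer semicontinuity of $\partial P$ and $\partial^\infty P$. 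Your only deviations are streamlining. Working with the exact inclusion at the projected point $\tilde{\mathbf{z}}-e^t$ makes the complementarity of the normalized limit $\bar M$ transparent, using that $N_{\mathcal{K}^\circ}$ is invariant under positive scaling of its argument. The chain \textbf{A11}$\Rightarrow\nabla\mathcal{U}\bar\lambda=0$, \textbf{A7}$\Rightarrow\bar\lambda=0$, RCQ$\Rightarrow\bar M=0$ replaces the paper's three-case analysis.
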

\begin{proof}
	Let $\cal T$ be an infinite subsequence of  $ \mathbb{N}$ such that 	
	$\lim_{t\in\mathcal{T}, ~t\rightarrow+\infty}\textbf{x}^{t,l_t}_\textbf{y}=\textbf{x}^*_\textbf{y}$. 
According to (\ref{tH0-tHt}), (\ref{resd}), and 
	(\ref{TC}) in Algorithm \ref{algorithm1}, 
	we have
	\begin{eqnarray}\label{main-inequality}
		& & \left\|H^{L,{\rm nat}}_{\rho_t,{\bf x}_{\bf y}^{t,l_t-1},\varpi^{t,l_t-1},0,D}({\bf{z}}^{t,l_t})\right\| \nonumber\\
		&\leq& \left\|H^{L,{\rm nat}}_{\rho_t,{\bf x}_{\bf y}^{t,l_t-1},\varpi^{t,l_t-1},\tau,D}({\bf z}^{t,l_t})-H^{L,{\rm nat}}_{\rho_t,{\bf x}_{\bf y}^{t,l_t-1},\varpi^{t,l_t-1},0,D}({\bf z}^{t,l_t})\right\|+ \left\|H^{L,{\rm nat}}_{\rho_t,{\bf x}_{\bf y}^{t,l_t-1},\varpi^{t,l_t-1},\tau,D}({\bf z}^{t,l_t})\right\| \nonumber \\
		&\leq& \tau_1  \left\|x^{t,l_t}-x^{t,l_t-1}\right\| 
		+ \tau_2 \sum_{i=1}^K p_i \left\|y_i^{t,l_t} - y_i^{t,l_t-1}\right\|
		+ \left\|H^{L,{\rm nat}}_{\rho_t,{\bf x}_{\bf y}^{t,l_t-1},\varpi^{t,l_t-1},\tau,D}({\bf z}^{t,l_t})\right\| \nonumber \\
		&\leq& \eta_3\rho_t^2+\eta_1\rho_t.
\end{eqnarray}	

	Letting $t\in {\cal T}$, $t\to +\infty$ in (\ref{main-inequality}), and using the formulas  (\ref{lambdaL})-(\ref{HL}), (\ref{HLnat}), we get
	\begin{eqnarray}\label{eq1}
		\lim_{t\in {\cal T}, \ t\to +\infty} \left[\nabla \hat f({\bf x}_{\bf y}^*) +  \nabla G({\bf x}_{\bf y}^*) M^{t,l_t} + \nabla {\cal U}({\bf x}_{\bf y}^*) \lambda^{t,l_t-1}\right]= 0,\\
		\lim_{t\in {\cal T}, \ t\to +\infty} \left[M^{t,l_t} - {\bf Proj}_{{\cal K}^{\circ}}(M^{t,l_t} + G({\bf x}_{\bf y}^*))\right] = 0, \label{eq2}
	\end{eqnarray}
	where  
	\begin{eqnarray*}
		M^{t,l_t}&:=&(\alpha_1^{t,l_t};\alpha_2^{t,l_t};\pi_{1,1}^{t,l_t};\ldots;\pi_{1,K}^{t,l_t};\pi_{2,1}^{t,l_t};\ldots;\pi_{2,K}^{t,l_t}),\\
		\lambda^{t,l_t-1}&:=& \left(
		\begin{array}{c}
			\lambda^{t,{l_t-1}}_1:=\frac{1}{\rho_t}(U_1 x^{t,{l_t}} + u_1) -  \frac{1}{\rho_t} \zeta^{t,l_t-1}_1\\
			\lambda^{t,{l_t-1}}_{\xi_i}:= p_i \left[\frac{1}{\rho_t}(U_{\xi_i} y_i^{t,l_t} + u_{\xi_i}) - \frac{1}{\rho_t} \zeta_{\xi_i}^{t,l_t-1}\right]; \forall i\in [K]
		\end{array}
		\right).
\end{eqnarray*}

Let 
	\begin{eqnarray}
		\delta^{t,l_t} = \left(\begin{array}{c}
			\delta^{t,l_t}_1:= \frac{1}{\rho_t} U_1 (x^{t,l_t} - x^{t,l_t-1})\\
			\delta^{t,l_t}_{\xi_i}:=   p_i \frac{1}{\rho_t} U_{\xi_i}(y_i^{t,l_t} - y_i^{t,l_t-1}); \forall i\in [K]
		\end{array}
		\right).      
	\end{eqnarray}
	According to (\ref{TC}), we can deduce that $\{\delta^{t,l_t}\}$ is bounded and 
	\begin{eqnarray}\label{d=0}
		\lim_{t\in {\cal T}, \ t\to +\infty} \delta^{t,l_t} = 0.
\end{eqnarray}
In view of $\zeta^{t,l_t-1}_1 \in \operatorname{prox}_{\rho_t P_1}(U_1 x^{t,l_t-1}+u_1)$,
$\zeta^{t,l_t-1}_{\xi_i} \in \operatorname{prox}_{\rho_t P_{\xi_i}}(U_{\xi_i} y_i^{t,l_t-1}+u_{\xi_i})$,
and Lemma~\ref{subgradient}, it follows that
	\begin{eqnarray*}\label{S1limit}
		 & & 
		\lambda^{t,l_t-1}_1-\delta^{t,l_t}_1 = 
		\frac{1}{\rho_t}(U_1 x^{t,l_t-1} + u_1) - \frac{1}{\rho_t} \zeta^{t,l_t-1}_1 \in \partial P_1(\zeta^{t,l_t-1}_1),\\
	& &	\lambda^{t,l_t-1}_{\xi_i} - \delta^{t,l_t-1}_{\xi_i} = p_i\left[\frac{1}{\rho_t}(U_{\xi_i} y_i^{t,l_t-1} + u_{\xi_i}) - \frac{1}{\rho_t} \zeta_{\xi_i}^{t,l_t-1}\right] \in \partial \left(p_i P_{\xi_i}(\zeta_{\xi_i}^{t,l_t-1})\right),\ i\in [K].
	\end{eqnarray*}
		The above two inclusions, together with the definition of $P({\cal U}({\bf x}_{\bf y}))$ in Table  \ref{tab:symbols} , yield
	\begin{eqnarray}\label{l-d-P}
			\lambda^{t,l_t-1} - \delta^{t,l_t}
			\in 
            \partial P(\zeta^{t,l_t-1}).
		\end{eqnarray}
	
	Next, we claim that
	\begin{eqnarray}\label{Gamma bounded}
		\{\Gamma_{t,l_t}:=\|M^{t,l_t}\| + \|\lambda^{t,l_t-1}\|\}_{t\in {\cal T}}\ \text{is bounded}.
	\end{eqnarray}
	Otherwise, subtracting $\nabla {\cal U}({\bf x}_{\bf y}^*) \delta^{t,l_t}$ from both sides of (\ref{eq1}), and then dividing  
	by $\Gamma_{t,l_t}$ and   letting $t\in{\cal T}$ tend to $+\infty$,
   we have 
	\begin{eqnarray}\label{neweq2}
		\lim_{t\in {\cal T}, \ t\to +\infty} \nabla G({\bf x}_{\bf y}^*) \frac{M^{t,l_t}}{\Gamma_{t,l_t}} + \nabla {\cal U}({\bf x}_{\bf y}^*) \frac{\lambda^{t,l_t-1} - \delta^{t,l_t}}{\Gamma_{t,l_t}} = 0.
	\end{eqnarray}
	Since $\left\{\frac{M^{t,l_t}}{\Gamma_{t,l_t}}\right\}_{t\in \cal T}$ and $\left\{\frac{\lambda^{t,l_t-1} - \delta^{t,l_t}}{\Gamma_{t,l_t}}\right\}_{t\in \cal T}$
	are bounded, there exists ${\cal T}_1 \subseteq {\cal T}$ such that  \begin{eqnarray}\label{eq3}
		\lim_{t\in {\cal T}_1, \ t\to + \infty} \Gamma_{t,l_t} = +\infty,\  \lim_{t\in {\cal T}_1, \ t\to + \infty}\frac{M^{t,l_t}}{\Gamma_{t,l_t}}= \tilde M,\ 
		\lim_{t\in {\cal T}_1, \ t\to +\infty}\frac{\lambda^{t,l_{t}-1} -\delta^{t,l_t}}{\Gamma_{t,l_t}} = \tilde \lambda.
	\end{eqnarray}
	It is easy to see that 
	$\|\tilde M\| + \|\tilde \lambda\| = 1.$  Moreover, by dividing (\ref{neweq2}) by $\Gamma_{t,l_t}$ and letting $t\in {\cal T}_1$, $t\to +\infty$, we find 
		\begin{eqnarray}\label{MG}
			\tilde M = (\tilde{\alpha}_1;\tilde{\alpha}_2;\tilde{\pi}_{1,1};\ldots;\tilde{\pi}_{1,K};\tilde{\pi}_{2,1};\ldots;\tilde{\pi}_{2,K})\in {\cal K}^{\circ},\ G({\bf x}_{\bf y}^*) \in {\cal K},\ {\tilde M^T} G({\bf x}_{\bf y}^*)=0. 
	\end{eqnarray}
That is, 
$$\tilde M \in N_{\cal K}(G(x^*)).$$

Since ${\cal U}({\bf x}_{\bf y}^{t,l_t-1}) \to {\cal U}({\bf x}_{\bf y}^*)$, $\zeta^{t,l_t-1} \in {\rm prox}_{\rho_t P}({\cal U}({\bf x}_{\bf y}^{t,l_t-1}))$, and assumption A1 that guarantees $P$ is lower bounded, it follows from Lemma~\ref{prox} that
$\zeta^{t,l_t-1} \to {\cal U}({\bf x}_{\bf y}^*)$. 
Thus we also have              
    \begin{eqnarray}
    \label{partial infinity}\tilde \lambda \in \partial^{\infty} P({\cal U}({\bf x}_{\bf y}^*)),
	\end{eqnarray}
	which is followed by (\ref{l-d-P}),
    $\frac{1}{\Gamma_{t,l_t}} \downarrow 0$, the last equation in (\ref{eq3}), 
    $
    \zeta^{t,l_t-1} \xrightarrow{P} 
    {\cal U}({\bf x}_{\bf y}^*)$  by assumption A10, and the 
	outer semi-continuity of $\partial^{\infty} P$. 
	
	There are only the following three possible cases for $\{\Gamma_{t,l_t}\}_{t\in {\cal T}}$ being unbounded.
	$$	\text{(i)}\	 \tilde \lambda = 0\ \text{and}\ \|\tilde M\|=1;\quad
		\text{(ii)}\	\tilde M = 0\ \text{and}\ \|\tilde \lambda\|=1;\quad
		\text{(iii)}\	\tilde M \ne 0\ \text{and}\ \tilde \lambda \ne 0.
$$
		Below we will show that all the three cases are indeed impossible.

	\vspace{0.3cm}
	If (i) holds, then 
	(\ref{neweq2})-(\ref{eq3}) imply
	\begin{equation}\label{8788}
		\begin{aligned}
			0=	&~	\nabla G^1(\textbf{x}^*_\textbf{y})\tilde{\alpha}_1+\nabla G^2(\textbf{x}^*_\textbf{y})\tilde{\alpha}_2 +\sum_{i=1}^K \bigg(\nabla G^{1,\xi_i}(\textbf{x}^*_\textbf{y})\tilde{\pi}_{1,i}+\nabla G^{2,\xi_i}(\textbf{x}^*_\textbf{y})\tilde{\pi}_{2,i}\bigg).
		\end{aligned}
	\end{equation} 
	Because the  RCQ holds at $\textbf{x}^*_\textbf{y}$  for the system $G({\bf x}_{\bf y}) \in {\cal K}$  by assumption {\bf A5}, there exists 
	$\bar{d}\in\mathbb{R}^{m_1+m_2K}$ satisfying (\ref{RCQ}).	
	Now we consider the only two possible cases in the following. 
	
	Case 1. When	$\tilde{\alpha}_2=0$ and $\tilde{\pi}_{2,i}=0$ for all 
	$i\in[K]$, (\ref{8788}) becomes
	\begin{equation}\label{hghg}
		\begin{aligned}
			0=	&~	\nabla G^1(\textbf{x}^*_\textbf{y})\tilde{\alpha}_1+\sum_{i=1}^K \nabla G^{1,\xi_i}(\textbf{x}^*_\textbf{y})\tilde{\pi}_{1,i},
		\end{aligned}
	\end{equation}
	with $\|(\tilde{\alpha}_1;\tilde{\pi}_{1,1};\ldots;\tilde{\pi}_{1,K})\|=1$ which contradicts the fact that the gradients $\nabla (G^1(\textbf{x}^*_\textbf{y}))_{j_1}$ and $\nabla (G^{1,\xi_i}(\textbf{x}^*_\textbf{y}))_{j_2}$,
	$i\in[K]$, $j_1\in[n_1]$, $j_2\in[n_2]$ are linearly independent based on the RCQ  at $\textbf{x}^*_\textbf{y}$ for the system $G({\bf x}_{\bf y})\in {\cal K}$. 
	
	Case 2. Otherwise,	$\tilde{\alpha}_2\neq0$ or $(\tilde{\pi}_{2,1};\ldots;\tilde{\pi}_{2,K})\neq0$. Because the RCQ holds at $\textbf{x}^*_\textbf{y}$ for the system  $G({\bf x}_{\bf y})\in {\cal K}$, according to the definition of RCQ, we know that there exists a vector $\bar d \in \mathbb{R}^{m_1 + m_2 K}$ such that 
	\begin{equation}
		\tilde{\alpha}_1^{T}\mathcal{J} G^1(\textbf{x}^*_\textbf{y})\bar{d}=0,~\tilde{\pi}_{1,i}^{T}\mathcal{J} G^{1,\xi_i}(\textbf{x}^*_\textbf{y})\bar{d}=0,~i\in[K].\nonumber
	\end{equation} Using these facts,  and  taking the inner product with the direction 
	$\bar{d}$ with both sides of (\ref{8788}), we obtain 
		\begin{equation}\label{eqqqq}
			\begin{aligned}
				0=	&~	\tilde{\alpha}_2^{T}\mathcal{J} G^2(\textbf{x}^*_\textbf{y})\bar{d}  +\sum_{i=1}^K \tilde{\pi}_{2,i}^{T}\mathcal{J} G^{2,\xi_i}(\textbf{x}^*_\textbf{y})\bar{d}.
			\end{aligned}
		\end{equation}		
		From (\ref{MG}), we have
		\begin{eqnarray}\label{complementarity}
			\tilde{\alpha}_2^{T}G^2(\textbf{x}^*_\textbf{y})=0 \quad \mbox{and}\quad \tilde{\pi}_{2,i}^T G^{2,\xi_i}
			({\bf x}_{\bf{y}}^*)=0, \ i\in [K].
	\end{eqnarray} 
	If 	$\tilde{\alpha}_2\neq0$,  using the first equation in (\ref{complementarity}) we have   
	\begin{equation}
		\tilde{\alpha}_2^{T}\mathcal{J} G^2(\textbf{x}^*_\textbf{y})\bar{d} =\tilde{\alpha}_2^{T}\bigg(G^2(\textbf{x}^*_\textbf{y})+\mathcal{J} G^2(\textbf{x}^*_\textbf{y})\bar{d}\bigg).\nonumber
	\end{equation}
	Because the RCQ holds at $\textbf{x}^*_\textbf{y}$ for the system $G({\bf x}_{\bf y}) \in {\cal K}$, we have
	\begin{equation}
G^2(\textbf{x}^*_\textbf{y})+\mathcal{J}G^2(\textbf{x}^*_\textbf{y})\bar{d}\in{\rm int}(-C_1).\nonumber
	\end{equation}
	This, together with $\tilde{\alpha}_2\in-C_1^{\circ}$, yields 
	\begin{equation}\label{fff}
		\tilde{\alpha}_2^{T}\mathcal{J} G^2(\textbf{x}^*_\textbf{y})\bar{d} =\tilde{\alpha}_2^{T}\bigg(G^2(\textbf{x}^*_\textbf{y})+\mathcal{J} G^2(\textbf{x}^*_\textbf{y})\bar{d}\bigg)<0.
	\end{equation}	
	Otherwise $(\tilde{\pi}_{2,1};\ldots;\tilde{\pi}_{2,K})\neq 0$. Then there exists $\bar i \in [K]$ such that $\tilde{\pi}_{2,\bar{i}}\neq0$. Using the second equation in (\ref{complementarity}), $\tilde \pi_{2,i} \in -C_{2,\xi_i}^{\circ}$, and $G^{2,\xi_i}(\textbf{x}^*_\textbf{y})+\mathcal{J} G^{2,\xi_i}(\textbf{x}^*_\textbf{y})\bar{d} \in {\rm int}(-C_{2,\xi_i})$, we can get
	\begin{equation}\label{huhu}
		\tilde{\pi}_{2,i}^{T}\mathcal{J} G^{2,\xi_i}(\textbf{x}^*_\textbf{y})\bar{d}=\tilde{\pi}_{2,i}^{T}\bigg(G^{2,\xi_i}(\textbf{x}^*_\textbf{y})+\mathcal{J} G^{2,\xi_i}(\textbf{x}^*_\textbf{y})\bar{d}\bigg)\le 0,\quad \forall i\in [K],
	\end{equation}
	and 
	\begin{eqnarray}\label{ineq}
		\tilde{\pi}_{2,\bar i}^{T} 	\mathcal{J} G^{2,\xi_{\bar i}}(\textbf{x}^*_\textbf{y})\bar{d}=\tilde{\pi}_{2,\bar i}^{T}\bigg(G^{2,\xi_{\bar i}}(\textbf{x}^*_\textbf{y})+\mathcal{J} G^{2,\xi_{\bar i}}(\textbf{x}^*_\textbf{y})\bar{d}\bigg)< 0.	\end{eqnarray}
	In view	of (\ref{fff}), (\ref{huhu}) and (\ref{ineq}), it follows that
	\begin{equation}	\begin{aligned}
			\tilde{\alpha}_2^{T}\mathcal{J} G^2(\textbf{x}^*_\textbf{y})\bar{d} 
			+ \sum_{i=1}^K \tilde{\pi}_{2,i}^{T}\mathcal{J} G^{2,\xi_i}(\textbf{x}^*_\textbf{y})\bar{d} < 0,
		\end{aligned}
	\end{equation}
	which contradicts (\ref{eqqqq}). 
	
Therefore, the RCQ  at ${{\bf x}_{\bf y}^*}$ for the system $G({\bf x}_{\bf y}) \in {\cal K}$ guarantees that the situation in (i) does not exist.
	
	\vspace{0.2cm}
	
 If (ii) holds, then 
		(\ref{neweq2})-(\ref{partial infinity}) imply
		$$\nabla {\cal U}({\bf x}_{\bf y}^*) \tilde \lambda = 0, \quad \tilde \lambda \in \partial^{\infty} P({\cal U}({\bf x}_{\bf  y}^*)). $$
		However, by assumption {\bf A7}, we can deduce  $\tilde \lambda = 0$, which contradicts the fact $\|\tilde \lambda\|=1$. Hence, situation in (ii) does not exist also. 
	
	\vspace{0.2cm}
	
If (iii) holds, then  (\ref{neweq2})-(\ref{partial infinity}) yield 
		\begin{eqnarray*}
			\nabla G({\bf x}_{\bf y}^*) \tilde M = -\nabla {\cal U}({\bf x}_{\bf y}^*) \tilde \lambda,\ \tilde M \in N_{\cal K}(G({\bf x}_{\bf y}^*)),\ \tilde \lambda \in \partial^{\infty} P({\cal U}({\bf x}_{\bf y}^*)).
		\end{eqnarray*}
		According to (\ref{2-3}), we know that $\nabla G({\bf x}_{\bf y}^*) \tilde M \in N_{\Phi}({\bf x}_{\bf y}^*)$. Using $\tilde \lambda \in \partial^{\infty} P({\cal U}({\bf x}_{\bf y}^*)) $ and assumption {\bf A11}, we get  $\nabla  {\cal U}({\bf x}_{\bf y}^*)\tilde \lambda =0$ since $\tilde \lambda \in \partial^{\infty}P({\cal U}({\bf x}_{\bf y}^*))$.  We then derive $\tilde\lambda =0$ by assumption  {\bf A7}.  This contradicts the fact that $\tilde \lambda \ne 0$ in (iii). Thus the situation in (iii) can not exist either.
	
Till now, we have shown that (\ref{Gamma bounded}) holds, i.e., 
		the sequence	
		$\{\Gamma_{t,l_t}\}_{t\in \cal T}$ is bounded. It is then clear that the sequence $\{\textbf{z}^{t,l_t}\}_{t\in\mathcal{T}}$
	generated by Algorithm \ref{algorithm1} is bounded. 
	Thus there exist an infinite subsequence $\mathcal{T}_2\subseteq {\mathcal T}_1$ and vectors ${\bf z}^*$, $M^*$ and $\lambda^*$, such that 
	\begin{eqnarray*}
		\textbf{z}^* = \lim_{t\in\mathcal{T}_2, ~t\rightarrow+\infty}\textbf{z}^{t,l_t},\quad 
		M^* = \lim_{t\in\mathcal{T}_2, ~t\rightarrow+\infty} M^{t,l_t},
	\end{eqnarray*}
	and  
	\begin{eqnarray*}
		\lambda^* = \lim_{t\in\mathcal{T}_2, ~t\rightarrow+\infty} \lambda^{t,l_t-1} = 
		\lim_{t\in\mathcal{T}_2, ~t\rightarrow+\infty} (\lambda^{t,l_t -1} - \delta^{t,l_t})  \in \partial P({\cal U}({\bf x}_{\bf y}^*))
	\end{eqnarray*}
	by (\ref{d=0}),  (\ref{l-d-P}),  ${\bf x}_{\bf y}^{t,l_t-1} \xrightarrow{P({\cal U}(\cdot))} {\bf x}_{\bf y}^*$ as $t\in {\cal T}_2 \subseteq {\cal T}_1$, and the outer semicontinuity of $\partial P({\cal U}(\cdot))$.

		Using (\ref{eq1}), (\ref{eq2}), and the above three equations,
	we find that
\begin{eqnarray*}
	& & 0 = \nabla \hat f({\bf x}_{\bf y}^*) + \nabla G({\bf x}_{\bf y}^*) M^* + \nabla {\cal U}({\bf x}_{\bf y}^*)\lambda^*,\ \lambda^* \in \partial P({\cal U}({\bf x}_{\bf y}^*)),\\
	& & M^* = {\rm Proj}_{{\cal K}^{\circ}} (M^* + G({\bf x}_{\bf y}^*)) ,
\end{eqnarray*}
where the last equation is equivalent to $M^* \in N_{{\cal K}}(G({\bf x}_{\bf y}^*))$.
Therefore, there exist $\lambda^* \in \partial P({\cal U}({\bf x}_{\bf y}^*))$ and $M^* \in N_{\cal K} (G({\bf x}_{\bf y}^*))$ such that 
\begin{equation*}
	\begin{aligned}
		&	0 = \nabla \hat{f}(\textbf{x}^*_{\textbf{y}})+
		\nabla {\cal U}({\bf x}_{\bf y}^*) \lambda^*	
		+\nabla G(\textbf{x}^*_{\textbf{y}})M^*,
	\end{aligned}
\end{equation*}
and consequently
{\rm $\textbf{x}_\textbf{y}^*$} is  a KKT point of (\ref{new_prob}).	
\end{proof}

\begin{theorem}\label{l0convergence}
Suppose
$\Psi({\bf x}_{\bf y}) = \sum_{i=1}^{m_1+m_2 K}
\psi_i(({\bf x}_{\bf y})_i)$ for some lsc functions $\psi_i : \mathbb{R} \to \mathbb{R}$. Let ${\cal I} = \{i\ : \partial^{\infty} \psi_i(({\bf x}_{\bf y}^*)_i)=\{0\}\}$ and ${\cal I}^c$ be the complement of ${\cal I}$ with respect to $\{1,\ldots,m_1 + m_2 K\}$.
Assume further that for any $i\in {\cal I}^c$, $\partial \psi_i(({\bf x}_{\bf y}^*)_i) = \mathbb{R}$, and the RCQ holds at $({\bf x}_{\bf y}^*)_{\cal I}$ for the system $G(({\bf x}_{\bf y})_{\cal I}, ({\bf x}_{\bf y}^*)_{{\cal I}^c})\in {\cal K}$.
Let {\rm $\{\textbf{z}^{t,l_t}\}$} be the sequence generated by the SDC method in Algorithm \ref{algorithm1} and {\rm $\textbf{x}^*_\textbf{y}$} be an arbitrary  accumulation point of {\rm $\{\textbf{x}^{t,l_t}_\textbf{y}\}$}. 	
Then
{\rm $\textbf{x}^*_\textbf{y}$} is a KKT point of {\rm(\ref{new_prob})}. 
\end{theorem}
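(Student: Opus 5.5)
We follow the proof of Theorem \ref{convergence}, replacing assumptions {\bf A7}, {\bf A10}, {\bf A11} by the separable structure, in the spirit of Corollary \ref{separable}. Here $P\circ{\cal U}=\Psi$ with ${\cal U}$ the identity (up to the weights $p_i$), so $\nabla{\cal U}$ is diagonal. Take $\mathcal{T}$ with ${\bf x}_{\bf y}^{t,l_t}\to{\bf x}_{\bf y}^*$. Exactly as in (\ref{main-inequality}), conditions (\ref{resd}), (\ref{TC}) and (\ref{tH0-tHt}) give the limits (\ref{eq1})--(\ref{eq2}). As before, (\ref{l-d-P}) holds componentwise: $(\lambda^{t,l_t-1}-\delta^{t,l_t})_j\in\partial\psi_j(\zeta^{t,l_t-1}_j)$ (with the $p_i$ weights). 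Lemma \ref{prox} gives $\zeta^{t,l_t-1}\to{\bf x}_{\bf y}^*$, and (\ref{d=0}) gives $\delta^{t,l_t}\to0$.

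The key step handles the coordinates in ${\cal I}^c$ separately. For $j\in{\cal I}^c$ the hypothesis $\partial\psi_j(({\bf x}_{\bf y}^*)_j)=\mathbb{R}$ lets us pick any real number as a subgradient. So we never need to bound $\lambda_j^{t,l_t-1}$ for $j\in{\cal I}^c$. Instead we drop those rows of (\ref{eq1}) and retain only the ${\cal I}$-rows:
\[
\lim_{t\in\mathcal{T}}\big[(\nabla\hat f({\bf x}_{\bf y}^*))_{\cal I}+(\nabla G({\bf x}_{\bf y}^*)M^{t,l_t})_{\cal I}+\lambda^{t,l_t-1}_{\cal I}\big]=0.
\]
These rows are the gradient with respect to $({\bf x}_{\bf y})_{\cal I}$ of the reduced system $G(({\bf x}_{\bf y})_{\cal I},({\bf x}_{\bf y}^*)_{{\cal I}^c})\in{\cal K}$. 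We then repeat the boundedness argument for $\Gamma_t=\|M^{t,l_t}\|+\|\lambda^{t,l_t-1}_{\cal I}\|$. Normalizing gives a limit $(\tilde M,\tilde\lambda_{\cal I})$ of norm one, with $\tilde M\in N_{\cal K}(G({\bf x}_{\bf y}^*))$ by (\ref{eq2}).

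The main obstacle is to ensure that $\tilde\lambda_j\in\partial^\infty\psi_j(({\bf x}_{\bf y}^*)_j)=\{0\}$ for $j\in{\cal I}$. This needs $\zeta_j^{t,l_t-1}\xrightarrow{\psi_j}({\bf x}_{\bf y}^*)_j$, i.e.\ convergence of function values, which is no longer supplied by {\bf A10}.

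I would obtain this convergence as follows. Since $\partial^\infty\psi_j(({\bf x}_{\bf y}^*)_j)=\{0\}$, Theorem 9.13 of \cite{Variational} makes $\psi_j$ locally Lipschitz near $({\bf x}_{\bf y}^*)_j$, hence continuous there. Then $\psi_j(\zeta_j)\to\psi_j(({\bf x}_{\bf y}^*)_j)$ automatically, and moreover the subgradients $\lambda_j^{t,l_t-1}-\delta_j^{t,l_t}$ are uniformly bounded by the local Lipschitz constant. So $\tilde\lambda_{\cal I}=0$ directly, and the normalized relation reduces to $(\nabla G({\bf x}_{\bf y}^*)\tilde M)_{\cal I}=0$ with $\|\tilde M\|=1$.

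This contradicts the RCQ for the reduced system at $({\bf x}_{\bf y}^*)_{\cal I}$. The Case 1/Case 2 argument of part (i) in the proof of Theorem \ref{convergence} applies verbatim with the Jacobians restricted to the ${\cal I}$ columns, the direction $\bar d$ being supported on ${\cal I}$.

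Hence $M^{t,l_t}$ and $\lambda_{\cal I}^{t,l_t-1}$ are bounded. Extract convergent subsequences $M^*$ and $\lambda^*_{\cal I}$. Outer semicontinuity of $\partial\psi_j$, again using the continuity from the previous step, gives $\lambda_j^*\in\partial\psi_j(({\bf x}_{\bf y}^*)_j)$ for $j\in{\cal I}$. For $j\in{\cal I}^c$ define
\[
\lambda_j^*:=-\big(\nabla\hat f({\bf x}_{\bf y}^*)+\nabla G({\bf x}_{\bf y}^*)M^*\big)_j,
\]
which is admissible because $\partial\psi_j(({\bf x}_{\bf y}^*)_j)=\mathbb{R}$ (after dividing by $p_i$ where needed). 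Then (\ref{SPoint}) holds in every coordinate, and (\ref{eq2}) yields $M^*\in N_{\cal K}(G({\bf x}_{\bf y}^*))$. Feasibility ${\bf x}_{\bf y}^*\in\Phi$ follows from (\ref{eq2}) together with the boundedness of $M^{t,l_t}$. Therefore ${\bf x}_{\bf y}^*$ is a KKT point.
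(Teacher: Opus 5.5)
Your proof is correct and follows the same route as the paper. You keep only the ${\cal I}$-rows of (\ref{eq1}), rule out unbounded multipliers through the reduced RCQ exactly as in case (i) of the proof of Theorem~\ref{convergence}, pass to the limit on ${\cal I}$, and fill the ${\cal I}^c$ components freely using $\partial\psi_j(({\bf x}_{\bf y}^*)_j)=\mathbb{R}$. Your explicit appeal to \cite[Theorem 9.13]{Variational} goes beyond the paper's sketch, which invokes only the horizon-subdifferential conditions and leaves this point implicit: local Lipschitz continuity of $\psi_j$ for $j\in{\cal I}$ both bounds $\lambda^{t,l_t-1}_{\cal I}$ and supplies the $\psi_j$-attentive convergence that {\bf A10} provided in Theorem~\ref{convergence} but which is not assumed here.
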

\begin{proof}
Using the separability of $\Psi$, we know that for any ${\bf x}_{\bf y}$, ${\cal U}({\bf x}_{\bf y}) = {\bf x}_{\bf y}$, 
\begin{eqnarray*}
	\partial P({\cal U}({\bf x}_{\bf y})) = \partial P({\bf x}_{\bf y}) = \partial \Psi({\bf x}_{\bf y}),
\end{eqnarray*} and  by \cite[Proposition 10.5]{Variational},
\begin{eqnarray*}
	\partial \Psi({\bf x}_{\bf y}) = \partial_{({\bf x}_{\bf y})_{\mathcal{I}}} \Psi({\bf x}_{\bf y}) \times \partial_{({\bf x}_{\bf y})_{\mathcal{I}^c}} \Psi({\bf x}_{\bf y}).
\end{eqnarray*}
By $\partial^{\infty} \psi_i(({\bf x}_{\bf y}^*)_i) = \{0\}$ for all $i\in {\cal I}$, 
we have 
\begin{eqnarray*}
	& & \partial^{\infty} \Psi_{\cal I}({\bf x}_{\bf y}^*) \cap N_{\Phi_{\cal I}} ({\bf x}_{\bf y}^*)= \{0\},\ \mbox{with}\ \Phi_{\cal I}:= \{{\bf x}_{\bf y}\ :\ G(({\bf x}_{\bf y})_{\cal I}, ({\bf x}_{\bf y}^*)_{{\cal I}^c}) \in {\cal K}\},\\
	& & \lambda \in \partial^{\infty} \Psi_{{\cal I}}({\bf x}_{\bf y}^*) \Longrightarrow \lambda =0.
\end{eqnarray*}
Similarly to (\ref{eq1})-(\ref{eq2}), Algorithm \ref{algorithm1}  guarantees that  
\begin{eqnarray*}
	\lim_{t\in {\cal T},\ t\to +\infty} \left[\nabla_{({\bf x}_{\bf y})_{\cal I}} {\hat f}({\bf x}_{\bf y}^*) + \nabla_{({\bf x}_{\bf y})_{\cal I}} G({\bf x}_{\bf y}^*) M^{t,l_t} + \lambda^{t,l_t-1}\right] = 0,\\
	\lim_{t\in {\cal T}, \ t\to +\infty} \left[M^{t,l_t} - {\bf Proj}_{{\cal K}^{\circ}}(M^{t,l_t}+G({\bf x}_{\bf y}^*))\right] = 0.
\end{eqnarray*}
Since the RCQ holds at $({\bf x}_{\bf y}^*)_{\cal I}$ for the system $G(({\bf x}_{\bf y})_{\cal I}, ({\bf x}_{\bf y}^*)_{{\cal I}^c}) \in {\cal K}$, using  arguments similar to  Theorem \ref{convergence}, and deducing from the above two equations, we can show that 
\begin{eqnarray*}
	0\in \nabla_{({\bf x}_{\bf y})_{\cal I}} {\hat f}({\bf x}_{\bf y}^*) + \partial_{({\bf x}_{\bf y})_{\cal I}} \Psi({\bf x}_{\bf y}^*) + \nabla_{({\bf x}_{\bf y})_{\cal I}} G({\bf x}_{\bf y}^*) N_{\cal K} (G({\bf x}_{\bf y}^*)).
\end{eqnarray*}
According to the assumption that $\partial \psi_{i}(({\bf x}_{\bf y}^*)_i) = \mathbb{R}$ for all $i\in {\cal I}^c$, it is obvious that 
\begin{equation*}
	0\in\nabla_{({\bf x}_{\bf y})_{\mathcal{I}^c}} \hat{f}(\textbf{x}^*_{\textbf{y}})+\partial_{({\bf x}_{\bf y})_{\mathcal{I}^c}}\Psi(\textbf{x}^*_{\textbf{y}}) +\nabla_{({\bf x}_{\bf y})_{\mathcal{I}^c}} G(\textbf{x}^*_{\textbf{y}})N_\mathcal{K}(G(\textbf{x}^*_{\textbf{y}})).
\end{equation*}
Combining the above two inclusions, we have  
\begin{eqnarray*}
	0\in \nabla {\hat f}({\bf x}_{\bf y}^*) + \partial \Psi({\bf x}_{\bf y}^*) + \nabla G({\bf x}_{\bf y}^*) N_{\cal K}( G({\bf x}_{\bf y}^*)).
\end{eqnarray*}
Therefore, ${\bf x}_{\bf y}^*$ is a KKT point of (\ref{new_prob}). 	
\end{proof}

\subsection{Useful observations and extensions}
\label{extensions}
In this subsection, we provide some useful observations and extensions. The idea of developing the SDC method and the roles of  assumptions are briefly illustrated in Figure \ref{relationship}.


\begin{figure}[h]  
    \centering     
    \includegraphics[width=1\textwidth]{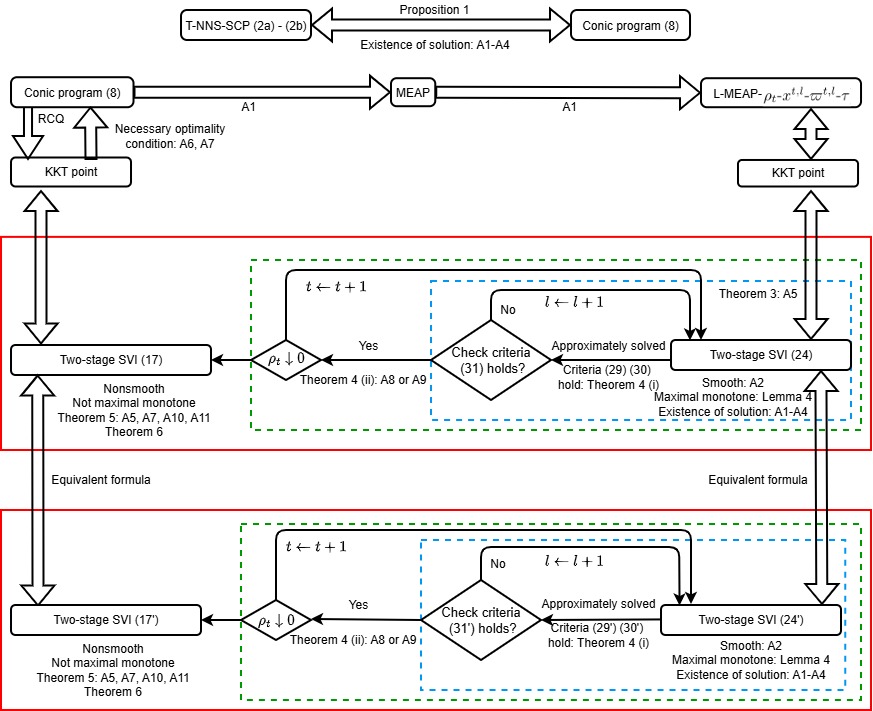} 
    \caption{Overall framework of the proposed  SDC-PHM and its theoretical justification} 
  	\label{relationship} 
\end{figure}

\newpage

\noindent{\bf Observations and extensions}

(I) The first two  rows above the first red box provide the idea of simplifying the original nonconvex nonsmooth T-NNS-SCP to a  convex smooth two-stage SP. Algorithm \ref{algorithm1} is illustrated in the first red box in Figure \ref{relationship}. That is, through the  KKT points, the nonsmooth T-NNS-SCP is solved via the SVI approach from the maximal monotone two-stage SVI. 

(II) In practice, the KKT point of (\ref{new_prob}) may have a different but equivalent formula, which leads to an equivalent two-stage SVI that is different from (\ref{OKKT}).
Adopting the same formula for the KKT point of (\ref{step2}), and the corresponding two-stage SVI, instead of (\ref{SVI22}), we can still use the same framework in Algorithm \ref{algorithm1}, and get the convergence results that any accumulation point is a solution of the two-stage SVI corresponding to the original problem, and provide a KKT point of (\ref{new_prob}).This extension is illustrated in the red box at the bottom of Figure \ref{relationship}. See Sect. 5 for an example of this extension. 

(III) The functions are all affine in conic constraints of  (\ref{stage1}) and (\ref{stage2}). This requirement is  only used in the proof of Lemma \ref{promonotone}, to guarantee that the two-stage SVI  in (\ref{SVI22}) corresponding to the L-MEAP is maximal monotone. Therefore, if there exist functions in conic constraints that are not affine but   the SVI in (\ref{SVI22}) is still maximal monotone and the existence of a solution is guaranteed, then Algorithm \ref{algorithm1} and the convergence results in Theorems  \ref{well-defined}-\ref{l0convergence} hold.  For this extension,  we write $\nabla G({\bf x}_{\bf y})$ instead of a constant matrix $\nabla G$ throughout this paper.

(IV) The natural residual function 
$H^{L,{\rm nat}}_{\rho_t, {\bf x}_{\bf y}^{t,l},{\varpi}^{t,l},\tau,D}(\tilde{\bf z})$
in the stopping criteria (\ref{resd}) can be replaced by any other residual function $H^{L,\rm{res}}_{\rho_t, {\bf x}_{\bf y}^{t,l},{\varpi}^{t,l},\tau,D}(\tilde{\bf z})$ as long as it satisfies
\begin{eqnarray}
& & H^{L, \rm{res}}_{\rho_t, {\bf x}_{\bf y}^{t,l},{\varpi}^{t,l},\tau,D}(\tilde{\bf z}) =0 
\Longleftrightarrow	\tilde{{\bf z}} \in {\rm SOL}\left(D, H^{L}_{\rho_t, {\bf x}_{\bf y}^{t,l},{\varpi}^{t,l},\tau,D}({\bf z})\right),
\end{eqnarray}
and 
\begin{eqnarray}
& &\left\|H^{L,\rm{res}}_{\rho_t, {\bf x}_{\bf y}^{t,l},{\varpi}^{t,l},\tau,D}(\tilde{\bf z})
-H^{L,\rm{res}}_{\rho_t, {\bf x}_{\bf y}^{t,l},{\varpi}^{t,l},0,D}(\tilde{\bf z})\right\|
\nonumber\\
&\le& \tau_1 \|x-x^{t,l}\| + \tau_2 \sum_{i=1}^K p_i \|y_i-y_i^{t,l}\|\quad 
\mbox{for some constants}\ \tau_1,\tau_2>0. 
\end{eqnarray}

(V) When only the first-stage/second-stage objective function involves a nonsmooth term, we can just omit the process in this paper for the nonsmooth term in the second-stage/first-stage objective  function accordingly.  Then Algorithm \ref{algorithm1} as well as the convergence results in Theorems  \ref{well-defined}-\ref{l0convergence} hold.

(VI) Assumption {\bf A5} requires the existence of a Slater point of T-NNS-SCP (\ref{stage1})-(\ref{stage2}), and hence the RCQ holds at any feasible point with respect to the system $G({\bf x}_{\bf y})\in {\cal K}$. Under assumption {\bf A5}, it is shown in Theorem \ref{lemma1} that the optimality condition of  the convex program (\ref{step2}) is equivalent to the smooth and maximal monotone two-stage SVI (\ref{SVI22}). If the two-stage SVI  (\ref{SVI22}) is of special structure, e.g., it is a P-matrix LCP \cite{CottlePang}, then we can still solve it without requiring that it is of  maximal monotone. 
In this case, we do not need {\bf A5}, but only require the RCQ holds at any accumulation point of $\{{\bf x}_{\bf y}^t\}$ to obtain the same convergence results as in this paper.

\section{Nonconvex  nonsmooth two-stage extension of Markowitz's mean-variance model}
\label{appl}

In this section, we  adopt the extension of Markowitz’s mean-variance model \cite{Sanjay} as our baseline. To better align the model with practical asset management requirements, 
we impose non-negativity constraints on portfolio weights to prohibit short selling, 
and incorporate an $\ell_0$-norm regularization term into the objective function to induce sparsity, leading 
to  the following nonconvex  nonsmooth two-stage SP:
\begin{equation}\label{orimodel1}
	\begin{aligned}
		\min_{x\in \mathbb{R}^{n}}\ \ & x^{T}Q_{1}x+ \gamma\|x\|_0+ \sum_{i=1}^{K}\frac{1}{K}\vartheta_{i}(x,\xi_{i})\\
		\mathrm{s.t.}~\ \   &e^{T}x=1, \\
		& \bar{r}_{1}^{T}x\geq \bar{r}^{\min}_{1},\\
		& x\geq 0,
	\end{aligned}
\end{equation}
where 
$\vartheta_{i}(x,\xi_{i})$, $i=1,...,K$ is the optimal value of the second-stage problem
\begin{equation}\label{orimodel2}
	\begin{array}{ll}
		\min\limits_{y_i\in \mathbb{R}^{n}} & y_i^{T}Q_{2,i}y_i\\
		~~\mathrm{s.t.}  &e^{T}y_i=1,\\ 
		& \bar{r}_{2,i}^{T}y_i\geq \bar{r}^{\min}_{2,i},\\
		& y_i\geq 0,\\
		& \|x-y_i\|\leq\tau_{2,i}.
	\end{array}
\end{equation}
Here,
\begin{itemize}
	\item $x$: vector of first-stage portfolio positions;
	\item $e$: vector of all ones;
	\item $r_{1}, \bar{r}_{1}, \bar{r}_{1}^{\min}$: vector of asset returns, expected asset returns, and lower bound for expected returns in the first period, respectively;
	\item $Q_{1}$: covariance of asset returns in the first period;
	\item $y_i$: vector of second-stage portfolio positions under scenario $i$;
	\item $r_{2,i}, \bar{r}_{2,i}, \bar{r}_{2,i}^{\min}$: vector of asset returns, expected asset returns, and minimum required return in the second period under scenario $i$, respectively;
	\item $Q_{2,i}$: covariance of asset returns in the second period under scenario $i$;
	\item $\tau_{2,i}$: upper bound for the distance of first-stage decision and second-stage decision.\end{itemize}

The constraint $\|x-y_i\| \le \tau_{2,i}$ can be considered as a second-order cone (SOC) constraint  by adding an auxiliary variable $t$, i.e., a linear function in the second-order cone
$$(x-y_i;t) \in \{(s_1;s_2)\in \mathbb{R}^{n+1} \ :\ \|s_1\|\le s_2\},$$
 together with an equality constraint $t=\tau_{2,i}$.  It can also  be equivalently transformed to a nonlinear constraint
 $$\tau_{2,i}^2 - \|x-y_i\|^2\ge 0,$$
 which is a nonlinear function in the nonnegative cone. Here we adopt the second one, for ease of computation.  Suppose assumption {\bf A5} holds. It is then easy to see that assumptions {\bf A1}-{\bf A4}, and {\bf A6} hold. Then  (\ref{orimodel1})-(\ref{orimodel2}) can be reformulated into the following equivalent two-stage SP:
\begin{equation}\label{CP-1}
	\begin{aligned}
		\min_{\textbf{x}_\textbf{y}} &~x^TQ_1x+\gamma\|x\|_0 +\sum_{i=1}^K\frac{1}{K}y_i^{T}Q_{2,i}y_i\\
		\mathrm{s.t.}  &~ e^{T}x-1 =0,   && \text{[Multiplier: } 
		\alpha_1 \in \mathbb{R} \text{]} \\  
		&~ \bar{r}_{1}^{T}x- \bar{r}^{\min}_{1} \geq 0, &&  
		 \text{[Multiplier: } \alpha_2^r \ge 0 \text{]} \\
		&~ x\geq 0, 	&& \text{[Multiplier: } \alpha^x \ge 0 \text{]}\\	
		&~ e^{T}y_i - 1 = 0,  ~i\in [K], 
			&& \text{[Multiplier: } \pi_{1,i} \in \mathbb{R} \text{]}\\	
		&~ \bar{r}_{2,i}^{T} y_i - \bar{r}^{\min}_{2,i} \geq 0,~i\in [K],
			&& \text{[Multiplier: } \pi_{2,i}^r \ge 0 \text{]}\\
		&~ y_i\geq 0,~i\in [K], 	
		&& \text{[Multiplier: } \pi_{2,i}^y \ge 0 \text{]}\\
		& ~\tau_{2,i}^2 - \|x-y_i\|^2\geq 0 ,~i\in [K], 
			&& \text{[Multiplier: } \pi_{2,i}^{\tau} \ge 0\text{]}.
	\end{aligned}
\end{equation}	

For this problem, only the first-stage involves nonconvex nonsmooth term constituted by the $\ell_0$-norm. By using Moreau envelope, performing linearization the second term of its DC formula, and adding a regularization term at the current iterate $x^{t,l}$, we obtain 
the objective function of (\ref{step2}) 
\begin{eqnarray*}
	F^L_{\rho_t,x^{t,l},\varpi^{t,l},\tau}(\textbf{x}_{\textbf{y}})
	&:=&x^TQ_1x +\sum_{i=1}^K\frac{1}{K}y_i^{T}Q_{2,i}y_i 
	+\frac{1}{2 \rho_t}\|x\|^2 
	-R_{\rho_t}(x^{t,l})-\langle x-x^{t,l}, \varpi^{t,l}\rangle \\
	& &\quad +\frac{\tau}{2}\|x - x^{t,l}\|^2,
\end{eqnarray*}
As derived in Lemma \ref{subgradient} and \cite{L0}, the subdifferential of $R_{\rho_t}(x^{t,l})$ is given by
\begin{equation*}
	(\partial R_{\rho_t}(x^{t,l}))_j=
	\begin{cases}
		\{0\}, & |(x^{t,l})_{j}| < \sqrt{2\gamma\rho_t }, \\
		\{\frac{(x^{t,l})_{j}}{\rho_t}\}, & |(x^{t,l})_{j}| > \sqrt{2\gamma\rho_t }, \\
		\left\{0, \frac{(x^{t,l})_{j}}{\rho_t}\right\}, & |(x^{t,l})_{j}| = \sqrt{2\gamma\rho_t },
	\end{cases} \quad j \in [n].
\end{equation*}
We select a specific element $\varpi^{t,l}\in \partial R_{\rho_t}(x^{t,l})$ as:
\begin{equation}\label{norm0}
	(\varpi^{t,l})_j=
	\begin{cases}
		0, & |(x^{t,l})_{j}| < \sqrt{2\gamma\rho_t }, \\
		\frac{(x^{t,l})_{j}}{\rho_t}, & |(x^{t,l})_{j}| \geq \sqrt{2\gamma\rho_t },
	\end{cases}
    \quad  j \in [n].
\end{equation}

Applying Theorem \ref{lemma1}, we find that the KKT condition of  (\ref{step2}) for (\ref{CP-1}) is equivalent to the following two-stage SVI, which is in fact a two-stage mixed nonlinear complementarity problem:
\begin{equation}\label{two_ncp}
	\left\{\begin{aligned}
		& 2Q_{1}x+\frac{1}{\rho_t}x-\varpi^{t,l}+\tau(x-x^{t,l})+\alpha_{1}e
		-\alpha_{2}^r\bar{r}_{1}- \alpha_{2}^x
		+\sum_{i=1}^{K}2\pi_{2,i}^{\tau}(x-y_i) = 0,\\
		& \frac{2}{K}Q_{2,i}y_i+\pi_{1,i}e-\pi_{2,i}^r\bar{r}_{2,i}- \pi_{2,i}^{y_i}+2\pi_{2,i}^{\tau}(y_i-x) = 0, \quad  i\in[K],\\
		& -e^{T}x+1=0, \\
		& 0\leq \left(\bar{r}_{1}^Tx-\bar{r}_{1}^{\min}\right) \perp \alpha_2^r \geq 0,\\
		& 0\leq x \perp \alpha_2^x \geq 0, \\
		& -e^{T}y_i+1=0,  i\in [K],\\
		& 0\leq \left(\bar{r}_{2,i}^{T}y_i-\bar{r}_{2,i}^{\min}\right) 
		\perp \pi_{2,i}^r\geq 0, \quad i\in [K],\\
		& 0\leq y_i \perp \pi_{2,i}^{y_i} \geq 0, \quad  i\in [K],\\
		& 0\leq \left(\tau_{2,i}^{2}-\|x-y_i\|^{2}\right) \perp 
		\pi_{2,i}^{\tau}\geq 0, \quad \forall i\in [K].
	\end{aligned}
	\right.
\end{equation}
Here $d_1 \perp d_2$ refers to $d_1^T d_2 =0$.
It is easy to see that the above two-stage SVI is equivalent to  the following two-stage SVI in a concise form:
\begin{eqnarray}\label{two_ncp}
	\left\{\begin{array}{l}
		0\leq\left(2Q_{1}x+\frac{1}{\rho_t}x-\varpi^{t,l}+\tau(x-x^{t,l})
		+\alpha_{1}e-\alpha_{2}^r\bar{r}_{1}
		+\sum_{i=1}^{K}2\pi_{2,i}^\tau(x-y_i)\right) \perp x \geq 0,\\
		0\leq \left(	\frac{2}{K}Q_{2,i}y_i+\pi_{1,i}e-\pi_{2,i}^r\bar{r}_{2,i}
		+2\pi_{2,i}^\tau(y_i-x)\right) \perp y_i\geq 0,\ i\in[K],\\
		-e^{T}x+1=0,\\
		0\leq \left(\bar{r}_{1}^Tx-\bar{r}_{1}^{\min}\right) \perp \alpha_2^r
		\geq 0,\\
		-e^{T}y_i+1=0,\ i\in [K],\\
		0\leq \left(\bar{r}_{2,i}^{T}y_i-\bar{r}_{2,i}^{\min}\right)
		 \perp \pi_{2,i}^r\geq 0,\ i\in [K],\\
		0\leq \left(\tau_{2,i}^{2}-\|x-y_i\|^{2}\right) \perp \pi_{2,i}^\tau\geq 0,\  i\in [K].
	\end{array}
	\right.
\end{eqnarray}

In computation, we will use this concise SVI in ({\ref{two_ncp}}), which will lead to the corresponding concise two-stage SVI for the KKT condition of the original problem (\ref{CP-1}) by Algorithm \ref{algorithm1}. It is easy to see that ({\ref{two_ncp}}) can be written as 
$$0\in H_{\rho_t,x^{t,l},\varpi^{t,l},\tau}^{L}(\mathbf{z}) + N_{D}({\bf z}),$$ where 
$$
\mathbf{z} 
:= (x; y_1; \ldots; y_K; \alpha; \pi_1; \ldots; \pi_K) \in 
D := \mathbb{R}_+^{n} 
\times (\mathbb{R}_+^{n})^{\otimes K} 
\times (\mathbb{R} \times \mathbb{R}_+)
\times (\mathbb{R})^{\otimes K}
\times (\mathbb{R}_+^{2})^{\otimes K},
$$
with $\alpha := (\alpha_1; \alpha_2^r)$ and $\pi_i := (\pi_{1,i}; \pi_{2,i}^r;  \pi_{2,i}^{\tau_{2,i}})$ for $i\in [K]$, and 
\begin{equation}
	\label{H_def}
H_{\rho_t,x^{t,l},\varpi^{t,l},\tau}^{L}(\mathbf{z}) := 
	\begin{pmatrix}
		2Q_{1}x+\frac{1}{\rho_t}x-\varpi^{t,l}+\tau(x-x^{t,l})+\alpha_{1}e-\alpha_{2}^r\bar{r}_{1}+\sum_{i=1}^{K}2\pi_{2,i}^{\tau_{2,i}}(x-y_i) \\
		\frac{2}{K}Q_{2,i}y_i+\pi_{1,i}e-\pi_{2,i}^r\bar{r}_{2,i}+2\pi_{2,i}^{\tau_{2,i}}(y_i-x); \quad i\in[K] \\
		-e^{T}x+1 \\
		\bar{r}_{1}^Tx-\bar{r}_{1}^{\min} \\
		-e^{T}y_i+1;\quad i\in[K] \\
		\bar{r}_{2,i}^{T}y_i-\bar{r}_{2,i}^{\min} ; \quad i\in[K]\\
		\tau_{2,i}^{2}-\|x-y_i\|^{2} ; \quad i\in[K]
	\end{pmatrix}.\nonumber
\end{equation}

The successful implementation of PHM in Algorithm \ref{algorithm1} depends on establishing the result stated in the following proposition.

\begin{proposition}\label{maximal}
	The two-stage SVI {\rm (\ref{two_ncp})} is of maximal monotone type.
\end{proposition}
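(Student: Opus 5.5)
The plan is to show that the SVI (\ref{two_ncp}) is the KKT system of a convex program, written as $0\in H(\mathbf z)+N_D(\mathbf z)$ with $H$ monotone on $D$ and continuous. Then maximal monotonicity of $H+N_D$ follows from the standard result that the sum of a continuous monotone map and the normal cone of a closed convex set is maximal monotone (Rockafellar's sum theorem, since $\operatorname{dom} H$ is the whole space). This is the same route indicated for Lemma \ref{promonotone}, adapted to the one nonaffine constraint $\tau_{2,i}^2-\|x-y_i\|^2\ge 0$, as foreseen in observation (III).

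Concretely, I would write $H^L_{\rho_t,x^{t,l},\varpi^{t,l},\tau}(\mathbf z)$ as $(\nabla_{\mathbf x_{\mathbf y}}\mathcal L(\mathbf x_{\mathbf y},\mu);\,-\nabla_\mu \mathcal L(\mathbf x_{\mathbf y},\mu))$, where $\mu=(\alpha;\pi_1;\ldots;\pi_K)$ collects the multipliers. The Lagrangian is
\[
\mathcal L = F^L(\mathbf x_{\mathbf y}) + \alpha_1(e^Tx-1) - \alpha_2^r(\bar r_1^Tx-\bar r_1^{\min}) + \sum_{i=1}^K\Big[\pi_{1,i}(e^Ty_i-1) - \pi_{2,i}^r(\bar r_{2,i}^Ty_i-\bar r_{2,i}^{\min}) - \pi_{2,i}^{\tau}\big(\tau_{2,i}^2-\|x-y_i\|^2\big)\Big].
\]
The sign conventions must be checked so that the last rows of $H$ are exactly $-\nabla_\mu\mathcal L$ up to the equality-row sign, which is harmless because those multipliers are free.

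For $\mu$ in the dual cone part of $D$ (so $\alpha_2^r,\pi^r_{2,i},\pi^\tau_{2,i}\ge 0$), $\mathcal L(\cdot,\mu)$ is convex. The reasons are that $Q_1,Q_{2,i}$ are positive semidefinite covariances, $\frac{1}{2\rho_t}\|x\|^2+\frac{\tau}{2}\|x-x^{t,l}\|^2$ is convex, the linear terms are harmless, and $\pi^\tau_{2,i}\|x-y_i\|^2$ is convex because $\pi^\tau_{2,i}\ge0$. Also, $\mathcal L(\mathbf x_{\mathbf y},\cdot)$ is affine, hence concave.

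The key step is then the monotonicity estimate. For $\mathbf z,\mathbf z'\in D$, I would write $\langle H(\mathbf z)-H(\mathbf z'),\mathbf z-\mathbf z'\rangle$ as the sum of $\langle\nabla_{\mathbf x}\mathcal L(\mathbf x,\mu)-\nabla_{\mathbf x}\mathcal L(\mathbf x',\mu'),\mathbf x-\mathbf x'\rangle$ and $-\langle\nabla_\mu\mathcal L(\mathbf x,\mu)-\nabla_\mu\mathcal L(\mathbf x',\mu'),\mu-\mu'\rangle$. Using the convexity/concavity inequalities for the four terms $\mathcal L(\mathbf x',\mu)-\mathcal L(\mathbf x,\mu)$ etc., the standard saddle-function argument gives nonnegativity: it is the monotonicity of the subdifferential of a convex–concave function.

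I expect the main obstacle to be this step. Convexity in $\mathbf x$ holds only for $\pi^\tau\ge0$, so monotonicity holds on $D$ but not on the whole space. The maximality argument must therefore be done for $T:=H+N_D$, not by invoking global monotonicity of $H$. I would handle this by observing that $T$ coincides with the operator $(\partial_{\mathbf x}\tilde{\mathcal L},\partial_\mu(-\tilde{\mathcal L}))$ for the closed proper convex–concave extension $\tilde{\mathcal L}$, which equals $+\infty$ when $\mathbf x_{\mathbf y}\notin\mathbb R^{n}_+\times(\mathbb R^n_+)^{\otimes K}$ and $-\infty$ when $\mu$ violates the sign constraints. Rockafellar's theorem that the saddle operator of a closed proper concave–convex function is maximal monotone \cite{Variational} then yields the claim directly, and it automatically absorbs the normal-cone components $N_{\mathbb R_+}$ coming from $D$.
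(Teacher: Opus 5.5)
Your argument is correct, but it takes a different route from the paper.

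\textbf{The paper's route.} It reduces, via \cite{RoWets}, to the scenario problems (\ref{ncp}) and computes the Jacobian of the scenario map $H^{i,L}$. The multiplier blocks are skew-symmetric, so the quadratic form is $u_x^T(2Q_1+(\tau+\frac{1}{\rho_t})I_n)u_x+\frac{2}{K}u_y^TQ_{2,i}u_y+2\pi_{2,i}^\tau\|u_x-u_y\|^2\ge 0$ whenever $\pi_{2,i}^\tau\ge 0$. Monotonicity on $D_i$ then follows from \cite{Finite-Dimensional}. Maximality comes from \cite[Example 12.48]{Variational}: a continuous map that is monotone relative to a closed convex set, plus that set's normal cone, is maximal monotone.

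\textbf{Your route.} You recognize $H$ as the saddle operator of the Lagrangian. In fact your $\mathcal L$ reproduces every row of $H$ exactly, equality rows included, so no sign adjustment is needed. You derive monotonicity from the four-point convex--concave inequality without differentiating $H$. Maximality then comes from Rockafellar's theorem for closed proper saddle functions, applied to the simple extension of $\mathcal L$. That extension is closed because $\mathcal L$ is finite and continuous on a product of closed convex sets, and its subdifferential is $H+N_D$ on $D$ and empty elsewhere.

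\textbf{What each buys.} Your approach explains why the paper's Jacobian has that structure: skew multiplier blocks plus the primal Hessian of $\mathcal L$. It also extends with no computation to general convex, non-affine constraints, as in observation (III). The paper's approach is more elementary and concrete.

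\textbf{Two remarks.} First, the obstacle you flag at the end (monotonicity only on $D$) is exactly what the result cited by the paper removes. Once your inequality gives monotonicity relative to $D$, continuity of $H$ already yields maximality of $H+N_D$. So the saddle-function extension is optional, and you are right that the global sum theorem from your first paragraph does not apply. Second, the paper verifies monotonicity scenario by scenario, which is the form the PHM uses, while you work with the aggregated operator. Running your Lagrangian argument on each scenario Lagrangian of (\ref{ncp}) gives the scenario-wise version too.
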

\begin{proof}
By \cite[Theorem 3.5]{RoWets}, the monotonicity of the two-stage SVI (\ref{two_ncp}) is equivalent to the monotonicity of the decomposed variational inequality for each scenario $i \in [K]$:
	\begin{eqnarray}\label{ncp}
		\left\{\begin{array}{l}
			0\leq\left(2Q_{1}x+\frac{1}{\rho_t}x-\varpi^{t,l}+\tau(x-x^{t,l})+\alpha_{1}e-\alpha_{2}^r\bar{r}_{1}+2\pi_{2,i}^\tau(x-y_i)\right) \perp x \geq 0,\\
			0\leq \left(	\frac{2}{K}Q_{2,i}y_i+\pi_{1,i}e-\pi_{2,i}^r\bar{r}_{2,i}+2\pi_{2,i}^\tau(y_i-x)\right) \perp y_i\geq 0,\\
			-e^{T}x+1 = 0, \\
			0\leq \left(\bar{r}_{1}^Tx-\bar{r}_{1}^{\min}\right) \perp 
			\alpha_2^r \geq 0,\\
			-e^{T}y_i+1=0,\\
			0\leq \left(\bar{r}_{2,i}^{T}y_i-\bar{r}_{2,i}^{\min}\right) \perp \pi_{2,i}^r\geq 0,\\
			0\leq \left(\tau_{2,i}^{2}-\|x-y_i\|^{2}\right) \perp \pi_{2,i}^\tau\geq 0.
		\end{array}
		\right.
	\end{eqnarray}
	  Let  	${\bf{z}}_{\text{p}} := (x; y_i; \alpha_1; \alpha_2^r; \pi_{1,i};
	  {\pi}_{2,i})\in D_{i}:= \mathbb{R}_+^{n} \times  \mathbb{R}_+^{n} 
	  \times \mathbb{R} \times \mathbb{R}_+ \times  \mathbb{R}
	  \times \mathbb{R}_+^2$.  
We only need to show the function ${H}^{i,L}_{\rho_t,{\bf x}_{\bf y}^{t,l},\varpi^{t,l},\tau}({\bf{z}}_{\text{p}})$  associated with the variational 
	  inequality  (\ref{ncp}) is monotone on the set $D_i$, where  
	  \begin{eqnarray*}
	  		\label{H_def}
	  	& &	{H}^{i,L}_{\rho_t,{\bf x}_{\bf y}^{t,l},\varpi^{t,l},\tau}
	  		({\bf{z}}_{\text{p}}) := 
	  		\begin{pmatrix}
	  			2Q_{1}x+\left(\frac{1}{\rho_t}+\tau\right)x-\varpi^{t,l}-\tau x^{t,l}+\alpha_{1}e-\alpha_{2}^r \bar{r}_{1}+2\pi_{2,i}^\tau(x-y_i)\\
	  			\frac{2}{K}Q_{2,i}y_i+\pi_{1,i}e-\pi_{2,i}^r\bar{r}_{2,i}+2\pi_{2,i}^\tau(y_i-x) \\
	  			-e^{T}x+1 \\
	  			\bar{r}_{1}^Tx-\bar{r}_{1}^{\min} \\
	  			-e^{T}y_i+1\\
	  			\bar{r}_{2,i}^{T}y_i-\bar{r}_{2,i}^{\min} \\
	  			\tau_{2,i}^{2}-\|x-y_i\|^{2} \\
	  		\end{pmatrix}.
\end{eqnarray*}

By direct computation, the Jacobian matrix ${\cal{J}}{H}^{i,L}_{\rho_t,{\bf x}_{\bf y}^{t,l},\varpi^{t,l},\tau}
	({\bf{z}}_{\text{p}})$ of	${H}^{i,L}_{\rho_t,{\bf x}_{\bf y}^{t,l},\varpi^{t,l},\tau}
	({\bf{z}}_{\text{p}})$ is given by
	\begin{equation*}	\label{Jacobian_Matrix}
		{\cal{J}}	{H}^{i,L}_{\rho_t,{\bf x}_{\bf y}^{t,l},\varpi^{t,l},\tau}
		({\bf{z}}_{\text{p}}) = 
		\left(\begin{array}{ccccccc}
			2Q_{1}+(2\pi_{2,i}^\tau+\tau+\frac{1}{\rho_t})I_n& -2\pi_{2,i}^\tau I_n & e & -\bar{r}_{1} & 0 & 0 & 2(x - y_i) \\
			-2\pi_{2,i}^\tau I_n & \frac{2}{K}Q_{2,i}+2\pi_{2,i}^\tau I_n & 0 & 0 & e & -\bar{r}_{2,i} & -2(x - y_i) \\
			-e^{T} & 0 & 0 & 0 & 0 & 0 & 0\\
			\bar{r}_{1}^{T} & 0 & 0 & 0 & 0 & 0 & 0\\
			0 & -e^{T} & 0 & 0 & 0 & 0 & 0\\
			0 & \bar{r}_{2,i}^{T} & 0 & 0 & 0 & 0 & 0\\
			-2(x - y_i)^T & 2(x - y_i)^T & 0 & 0 & 0 & 0 & 0
		\end{array}
		\right).
	\end{equation*}
	
Let $\mathbf{u} = (u_x; u_y;u_{\alpha_1}; u_{\alpha_2^r}; u_{\pi_1}; u_{\pi_r}; u_{\pi_\tau}) \in \mathbb{R}^{2n+5}$ be an arbitrary vector partitioned in the same way as the column partition of ${\cal J} {H}^{i,L}_{\rho_t,{\bf x}_{\bf y}^t,\varpi^t,\tau}
({\bf{z}}_{\text{p}})$.
  Then we get  the quadratic form 
	\begin{equation*}
			\mathbf{u}^T 	{\cal{J}}	{H}^{i,L}_{\rho_t,{\bf x}_{\bf y}^{t,l},\varpi^{t,l},\tau}
			({\bf{z}}_{\text{p}}) \mathbf{u} =  u_x^T\left(2Q_{1}+(\tau+\frac{1}{\rho_t})I_n\right)u_x + \frac{2}{K}u_y^T Q_{2,i} u_y  + 2\pi_{2,i}^\tau \|u_x - u_y\|^2\ge 0,
	\end{equation*}
	since $Q_1, Q_{2,i}$ are positive semi-definite matrices, $\rho_t>0, \tau > 0$, and $\pi_{2,i}^\tau \ge 0$.
	Thus, the function ${H}^{i,L}_{\rho_t,{\bf x}_{\bf y}^{t,l},\varpi^{t,l},\tau}({\bf{z}}_{\text{p}})$  associated with the variational 
	inequality  (\ref{ncp}) is monotone on $D_i$ according to  \cite[Proposition 2.3.2]{Finite-Dimensional}.
	The monotonicity of the two-stage SVI (\ref{two_ncp}), combined with the continuity of the involved functions, implies that (\ref{two_ncp}) is of maximal monotone type according to \cite[Example 12.48]{Variational}.
	
\end{proof}

In numerical experiments in the next section, we use the natural residual function, and the variational inequality (\ref{ncp}) in the PHM for each scenario is solved by the efficient semi-smooth Newton method \cite{Luca}.

\section{Numerical experiments}
In this section, we do numerical experiments on the application illustrated in Sect.  \ref{appl} to evaluate the proposed  model and the SDC-PHM algorithm  from multiple perspectives. 
 All experiments are performed in Windows 10 on an Intel Core 10 CPU at 3.70 GHZ with 64 GB of RAM, using MATLAB R2021a. 

To construct the data parameters of the proposed two-stage nonconvex nonsmooth stochastic model (\ref{orimodel1})-(\ref{orimodel2}), we use historical daily returns of $n=40$ stocks selected from the S$\&$P 500 index that  spans from January 5, 2023, to July 1, 2025, comprising a total of $T=500$ daily observations. Let $P_{i,t}$ denote the closing price of the asset $i$ on day $t$. The daily return vector on day $t$ is defined as $R_t := (R_{1,t}; \ldots; R_{n,t}) \in \mathbb{R}^{n}$, where $R_{i,t} = (P_{i,t} - P_{i,t-1}) / P_{i,t-1}$. Using these historical data, the first-stage expected return vector $\bar{r}_1$ and the covariance matrix $Q_1$ are estimated via the following expressions.\begin{equation}
	\begin{aligned}
		& \bar{r}_1 = \frac{1}{T} \sum_{t=1}^{T} R_t, \
		& Q_1 = \frac{1}{T-1} \sum_{t=1}^{T} (R_t - \bar{r}_1)(R_t - \bar{r}_1)^\top + \epsilon I,
	\end{aligned}
\end{equation}where $\epsilon=10^{-9}$ is a regularization parameter introduced to ensure numerical stability.
The second-stage asset returns $\bar{r}_{2,i}$ and the covariance matrices $Q_{2,i}$ for all $i\in [K]$ are generated via a multivariate GARCH
model, which are selected so that 
 the asset returns fall into the $[0.5, 1.5]$ range, volatilities vary around $10\%$, and correlations lie between $-0.5$ and $0.5$, as adopted in \cite{Sanjay}.  
 We consider the equally weighted portfolio $\bar x := \frac{1}{n}{e}$ and set 
 \begin{equation*}
 	\bar{r}_{1}^{\min} := 
 	 \bar{r}_1^T \bar x  - 0.05 |\bar{r}_1^T \bar x|,\quad \bar{r}_{2,i}^{\min}: = 
 	  \bar{r}_{2,i}^T \bar x - 0.05 |\bar{r}_{2,i}^T \bar x|, \ i\in [K].
 	\end{equation*}
 Then assumption A5 holds, since $\bar x$  is a Slater point of the feasible region for (\ref{CP-1}). 

We test different $K\in \{1000,3000,5000\}$. For each choice of $K$, we generate 20 test problems by  independently generating 20 replications of data parameters ${\bar r}_{2,i}$ and $Q_{2,i}$, $i\in [K]$. To ensure reproducibility, distinct random seeds are assigned to each replication. 
Table \ref{tab:problem_sizes} summarizes the dimensions of the test problems used in our experiments. 
\begin{table}[htbp]
	\centering
	\caption{Dimensions of test problems:  number ($\sharp$) of constraints and number of variables. }
	\label{tab:problem_sizes} 
	
	\begin{tabular}{ccc} 
		\toprule 
		& \multicolumn{2}{c}{Problem sizes} \\ 
		\cmidrule(lr){2-3} 
		$K$ & $\sharp$ constraints (42+43$K$) & $\sharp$ variables (40+40$K$) \\ 
		\midrule 
		1,000   &  43,042 &  40,040\\
		3,000   & 129,042  & 120,040 \\
		5,000  & 215,042  & 200,040 \\
		\bottomrule 
	\end{tabular}
\end{table}

In order to analyze the impacts of sparsity-inducing regularization term $\gamma\|x\|_0$ and the SOC constraints, we consider four models labeled A, B, C, and D. Model A is the proposed model (\ref{orimodel1})-(\ref{orimodel2}). Based on Model A, Model B excludes  the SOC constraints; Model C excludes the $l_0$-norm regularization term; and Model D excludes both the SOC constraints and the $l_0$-norm regularization term. Since Model A and Model B both involve the nonconvex and nonsmooth $l_0$-norm, we solve them using the proposed SDC-PHM. As Model C and Model D are convex, we solve them directly using the PHM.

We use the following criteria to measure the performance of computed solution ${\hat{\bf z}}$ together with  $\hat{\lambda} \in \partial \gamma \|\hat x\|_0$, which is constructed based on Lemma \ref{subgradient} and \cite{L0}  as follows:
\begin{equation}
    \hat{\lambda}:=\frac{1}{\hat{\rho}}\hat{x}-\hat{\varpi},
\end{equation}
where $\hat{\rho}$ is the smoothing parameter corresponding to the computed solution $\hat{x}$, and the components of $\hat{\varpi}$ are given by
\begin{equation}
	(\hat{\varpi})_j =
	\begin{cases}
		0, & |(\hat{x})_{j}| < \sqrt{2\gamma\hat{\rho} }, \\
		\frac{(\hat{x})_{j}}{\hat{\rho}}, & |(\hat{x})_{j}| \geq \sqrt{2\gamma\hat{\rho} },
	\end{cases}
	\quad  j \in [n].
\end{equation}

In view of  feasibility and optimality errors defined by the KKT system ($\text{KKT}_{\infty}$ and $\text{KKT}_{\text{rel}}$),  the value relating to SOC constraints (soc), and the feasibility error relating to the primal variables (FeasError), respectively, defined by 
\begin{subequations}
\label{crit_comp}  
\begin{align}
& \text{KKT}_{\infty} := \|H_{D}^{\rm nat}(\hat{\textbf{z}};\hat{\lambda})\|_\infty = \max_i | (H_{D}^{\rm nat}(\hat{\textbf{z}};\hat{\lambda}))_i |, \label{a}\\
& \text{KKT}_{\text{rel}} := \frac{\|H_{D}^{\rm nat}(\hat{\textbf{z}};\hat{\lambda})\|}{1+\|\hat{\textbf{z}}\|}, \label{b}\\
&  \text{soc}(\hat{\bf z}) :=\frac{1}{K} \sum_{i=1}^K  \|x - y_i\|, \label{c}\\
&  \text{FeasErr}(\hat{\bf z}): = \| G_{\rm{eq}}(\hat{\bf z}) \|^2 + \| \max(0, G_{\rm{ineq}}(\hat{\bf z})) \|^2, \label{d}
\end{align}
\end{subequations}
 where $G_{\rm{eq}}$ and $G_{\rm{ineq}}$  are the subvectors of  $G$ corresponding to equality constraints, and inequality constraints, respectively.

In the numerical experiments, we set the regularization parameter $\gamma = 10^{-5}$ in Model A and Model B, $\tau_{2,i} \equiv 0.2$ for all $i\in [K]$ in Model A and Model C, and the parameters for the SDC-PHM in Algorithm \ref{algorithm1} and Algorithm \ref{algorithm2} as follows. $$\tau = 10^{-4} ,\ \eta_1 = \eta_2 =  \eta_3 = K/5,~\sigma = 1 ,$$ and the smoothing parameters
 $$\rho_0=1;\quad \rho_{t+1}=0.8\rho_t\ \mbox{for}\ t\ge 0.$$
To terminate the SDC-PHM method or the PHM method, we first require that
$$|\mbox{obj}_{t-1} - \mbox{obj}_{t}| \le 10^{-3},$$
where ``{obj}'' refers to the objective value of each model that is concerned. 
In addition, for SDC-PHM we also require the smoothing parameter $$\rho_t \le  10^{-4}.$$

We evaluate the computed solution $\hat {\bf z}$  together with  $\hat \lambda$,  obtained from different models in terms of the number of nonzero entries of ${\hat{x}}$ (nnz), the criteria in (\ref{crit_comp}), the
 CPU time, and the total number of  PHM iterations (PHM iter), that refers to the total number of solving the variational inequality (\ref{SVI2}) in Algorithm \ref{algorithm2}, which is the main computation cost in SDC-PHM.
The results reported in Table \ref{ABCD} represent the above performance metrics over these 20 replications.

\begin{table}[h]
\caption{Comparison of computed solutions for models A, B (No SOC), C (No $\ell_0$), and D (No SOC, No $\ell_0$), using mean (without parenthesis) and standard deviation (in parenthesis) over 20 replications.}\label{ABCD}%
\footnotesize 
\setlength{\tabcolsep}{3pt} 
\begin{tabular}{@{}ccccccccc@{}}
\toprule
			$K$ & Model & nnz &$\text{KKT}_{\infty}$ & $\text{KKT}_{\text{rel}} $& $\text{soc}$ & $\text{FeasError}$ & CPU (s) & PHM iter \\
\midrule
\multirow{4}{*}{1,000} 
			& A       & 14 (0) & 7.2e-3 (1.2e-3) & 2.2e-4 (3.8e-5) & 0.17  (1.3e-3)  &  2.1e-5   (7.2e-6)   &  22 (3)    & 55 (6)\\
			& B    & 14 (0)& 3.9e-3 (4.6e-4)&2.9e-2 (4.3e-3)& 0.31 (1.4e-4)   &  2.4e-6  (8.3e-7)    &   16  (1)  & 42 (0)\\
			& C & 32 (2)&4.7e-2 (1.8e-2) &1.7e-2 (9.5e-3)& 0.20 (4.5e-3)   &  1.3e-5  (4.4e-6)    &   117 (18)  & 269 (41)\\
			& D & 24 (1) & 5.0e-2 (2.1e-2)&1.7e-2 (9.2e-3) & 0.28 (2.1e-2)   &  9.0e-6   (4.4e-6)   &  128  (19)   & 301 (43)\\
            			\hline
                        \multirow{4}{*}{3,000} 
			& A       & 14 (0) & 2.2e-2 (6.4e-3)&1.5e-4 (2.0e-5)& 0.17 (1.4e-3)   &  7.8e-5 (2.4e-5)     &36    (5)    & 51 (7) \\
			& B   & 14 (0)& 5.0e-3 (6.8e-4)& 5.0e-2 (7.6e-3)& 0.31 (9.1e-5)   &  6.9e-6  (1.7e-6)    &   30 (2)   & 42 (0)\\
			& C  & 32 (2) & 5.0e-2 (1.7e-2)&9.9e-3 (5.5e-3)& 0.20 (4.5e-3)   &  4.0e-5 (1.3e-5)     &   218 (32)   & 270 (40)\\
			& D & 24 (1) &5.4e-2 (2.0e-2) &9.8e-3 (5.3e-3) & 0.28 (2.1e-2)   &  2.7e-5   (1.3e-5)   &    243 (37)  &301 (43)\\
			\hline
				\multirow{4}{*}{5,000} 
			& A       & 14 (0) &3.7e-2 (1.1e-2) &1.2e-4 (1.4e-5)& 0.17 (1.4e-3)   &  1.4e-4  (4.8e-5)    &   56 (9)   &  50 (8)\\
			& B     & 14 (0)&5.0e-3 (6.8e-4) &6.3e-2 (9.6e-3) & 0.31 (8.4e-5)   &  1.2e-5  (2.9e-6)    &   46 (2)   & 42 (0)\\
			& C  & 32 (1)&5.2e-2 (1.8e-2) &7.7e-3 (4.3e-3) & 0.20 (4.4e-3)   &  6.7e-5 (2.2e-5)     & 357  (56)   & 270 (40)\\
			& D & 24 (1)& 5.6e-2 (2.0e-2)&7.6e-3 (4.1e-3) & 0.28 (2.1e-2)   &  4.6e-5  (2.2e-5)    &    384 (56)  & 302 (43)\\
\botrule
\end{tabular}
\end{table}

From ``nnz'' column in Table \ref{ABCD}, we can find that Models A and B, which incorporate the $\ell_0$-norm regularization, consistently achieve a high level of sparsity with an average of 14 non-zero entries. In contrast, Models C and D, lacking the cardinality constraint, result in much denser portfolios (nnz ranging from 24 to 32). This confirms the effectiveness of the proposed $\ell_0$-regularization term in selecting significant assets.
 From ``soc'' column in  Table \ref{ABCD},  comparing Model A with Model B (and similarly C with D), we observe that the inclusion of SOC constraints significantly reduces the SOC metric (representing the distance between first- and second-stage decisions). For instance, Model A maintains an average SOC value of approximately 0.17, whereas Model B increases to roughly 0.31, exceeding $\tau_{2,i}\equiv 0.2$ for all $i\in [K]$. This result indicates that the SOC constraints effectively ensure that the adjusted decisions in the second-stage do not diverge drastically from the decision in the first-stage.

 From the columns for ``$\text{KKT}_{\infty}$'', ``$\text{KKT}_{\text{rel}}$'', ``FeasError'' in Table \ref{ABCD}, we find that the SDC-PHM successfully generates approximate KKT points for Models A and B, and the PHM successfully generates approximate KKT points for Models C and  D. 
 It is interesting to see that  the proposed SDC-PHM for the nonconvex discontinuous Models A and  B converges much faster than the  PHM  used for the convex Models C and D, as shown in the last two columns of Table \ref{ABCD}. This seems different from what we expected, since the SDC-PHM  uses the PHM as a subroutine, and the $\ell_0$-regularization term is nonconvex and discontinuous that is usually considered to be hard to deal with.
To understand this phenomenon,  we draw in Figure \ref{objvalue} the curves of objective value versus CPU time (blue line), together with the curves of the cardinality of first-stage decision vector versus CPU time (red line), corresponding to the four models for a representative single run with a scenario size of $K=5000$. Figure \ref{objvalue} shows that due to the existence of the $\ell_0$-regularization term, the cardinality decreases much faster in  Models A and B, which leads the objective values of Models A and B to decrease also faster, and the SDC-PHM to converge faster than the PHM for Models C and D.  We also draw in Figure \ref{feasible}  the corresponding curves of feasibility error versus CPU time, which shows that the SDC-PHM for Models A and B achieves acceptable levels of feasibility error as the PHM method for Models C and D.

\begin{figure}[h]  
    \centering     
    \includegraphics[width=1\textwidth]{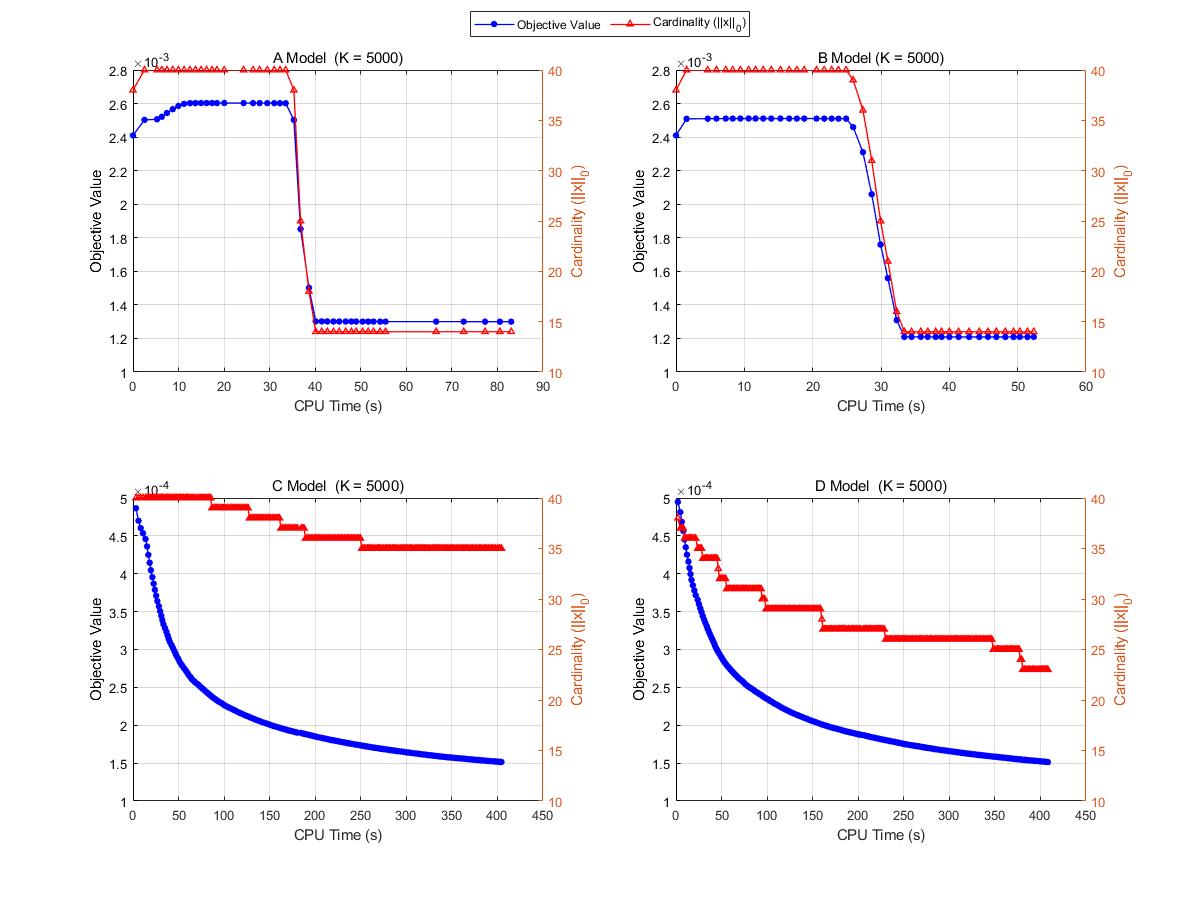} 
    \caption{Objective value (blue) and Cardinality  (red) versus CPU time for Models A–D under $K=5000$} 
  	\label{objvalue} 
\end{figure}

\begin{figure}[h]  
    \centering     
    \includegraphics[width=1\textwidth]{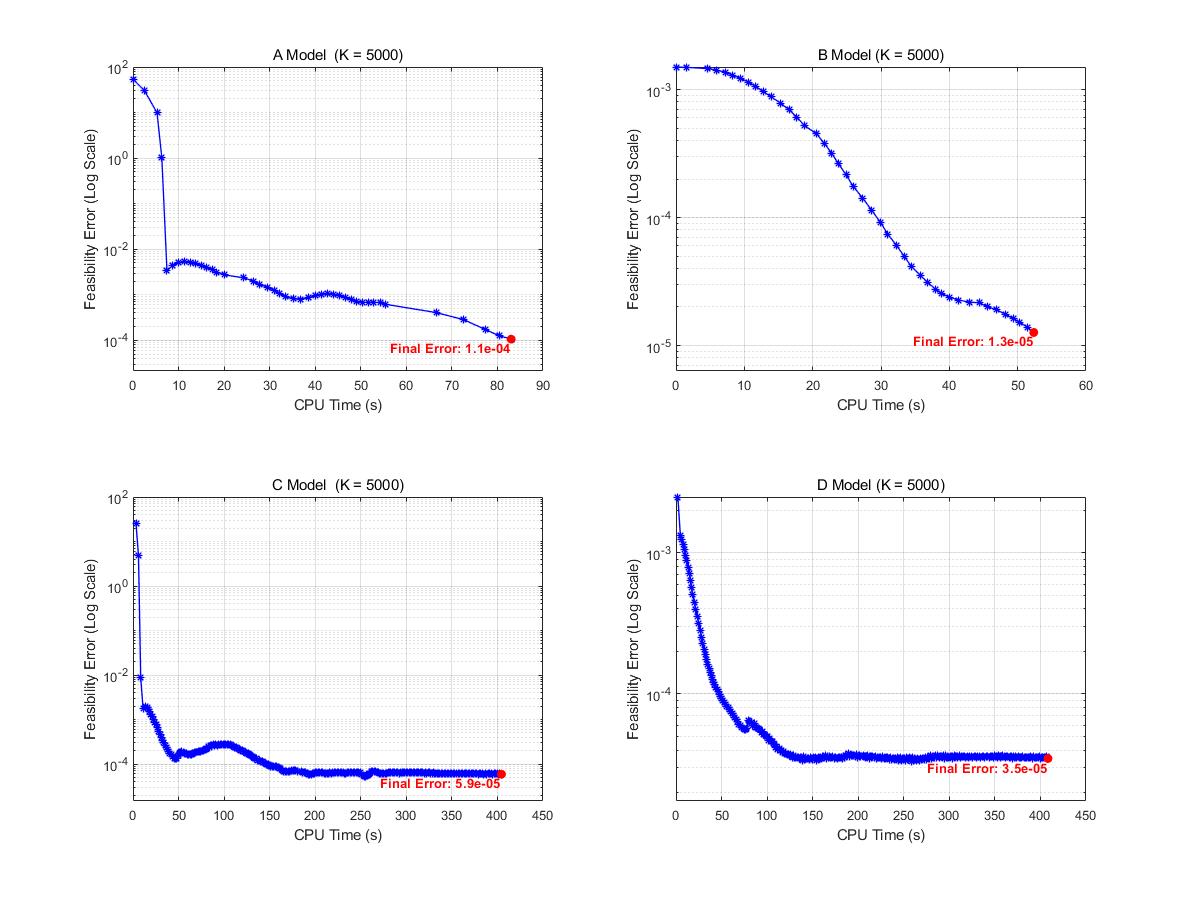} 
    \caption{Feasibility Error versus CPU time for Models A–D under $K=5000$} 
  	\label{feasible} 
\end{figure}

	\newpage	
    
\section{Conclusions}
We  propose the SDC method to solve a class of two-stage nonconvex nonsmooth stochastic conic program via the SVI approach. We show that there exists an accumulation point of the SDC method and any accumulation point is a KKT point, under mild assumptions. The SDC method provides a novel way for dealing with a two-stage nonconvex nonsmooth stochastic conic program, and is flexible to allow for various extensions, to broaden the applicability or enhance the efficiency in computation as illustrated in subsection \ref{extensions}. The SDC method has potential for solving the problem that the second-stage objective function is tangled with the first-stage decision variables, and this will be our future work.	 

\newpage
\backmatter
\bmhead{Acknowledgements} This work was supported by the National Natural Science Foundation of China (Grant No. 12171027; 12571316), and the China Scholarship Council (CSC) (Grant No. 202507090144).

\bibliography{sn-bibliography}


\end{document}